\documentclass[12pt,oneside,a4paper,reqno]{amsart}

\usepackage{amssymb}
\usepackage{amsmath}
\usepackage{amsthm}
\usepackage{amscd}
\usepackage[all]{xy}
\usepackage{longtable}
\usepackage{mathrsfs}

\usepackage{comment}
\usepackage{todonotes}

\usepackage{xcolor}
\usepackage{pict2e}
\usepackage{graphicx}
\usepackage{hyperref}
\hypersetup{
	colorlinks=true,
	linkcolor=red,
	citecolor=blue}
	\usepackage[utf8]{inputenc}
\usepackage[T1]{fontenc}
\usepackage{color}

\usepackage{extarrows}
\usepackage{tikz}
\usetikzlibrary{cd}

\usepackage{geometry}
\theoremstyle{plain}
\newtheorem{theorem}{Theorem}[section]
\newtheorem{lemma}[theorem]{Lemma}
\newtheorem{corollary}[theorem]{Corollary}
\newtheorem{proposition}[theorem]{Proposition}
\newtheorem{remark}[theorem]{Remark}

\newtheorem{problem}[theorem]{Problem}

\theoremstyle{definition}

\newtheorem{conjecture}[theorem]{Conjecture}

\newtheorem{remark-theorem}[theorem]{Remark-Theorem}
\newtheorem{counterexample}[theorem]{Counterexample}

\newtheorem{setup}[theorem]{Setup}

\newcommand{\kah}{K\"{a}hler }
\newcommand{\idd}{i\partial\overline{\partial}}
\newcommand{\dbar}{\overline{\partial}}

\newcommand{\cal}[1]{\mathcal{#1}}
\newcommand{\bb}[1]{\mathbb{#1}}
\newcommand{\scr}[1]{\mathscr{#1}}
\newcommand{\rom}[1]{\mathrm{#1}}

\newcommand{\Ox}[1]{\cal{O}_X({#1})}
\newcommand{\tl}[1]{\widetilde{#1}}

\newcommand{\hldpn}{\hbar^{L,\psi}_{D,\nu}}
\newcommand{\hldnu}{\hbar^{L}_{D,\nu}}
\newcommand{\hdt}{\hbar^{\psi,\tau}_{D,\nu}}
\newcommand{\opnd}{\omega_{P,\nu,\delta}}

\newcommand{\Pt}{Poincar\'{e}-type }

\newcommand{\lara}[2]{\langle{#1},{#2}\rangle}

\newcommand{\iO}[1]{i\Theta_{#1}}

\subjclass[2020]{32J25, 32C35, 14F18, 32Q15, 32U05}
\keywords{ 
Hard Lefschetz theorem, logarithmic sheaves, pseudo-effective line bundles, singular Hermitian metrics, multiplier ideal sheaves.}

\begin{document}
\title
[Hard Lefschetz Theorem for Logarithmic Sheaves] 
{Hard Lefschetz Theorem for Logarithmic Sheaves Twisted by Pseudo-effective Line Bundles}
\author{Yuta Watanabe}
\address{Department of Mathematics, Faculty of Science and Engineering, Chuo University.
1-13-27 Kasuga, Bunkyo-ku, Tokyo 112-8551, Japan}
\email{{\tt wyuta.math@gmail.com}, {\tt wyuta@math.chuo-u.ac.jp}}

\begin{abstract}
    In this paper, a Hard Lefschetz theorem for logarithmic sheaves twisted by pseudo-effective line bundles is established, extending a theorem of Demailly, Peternell, and Schneider.    
\end{abstract}


\maketitle

\vspace{-5mm}




\section{Introduction}

In complex analytic geometry, pseudo-effective line bundles have become a central object of study in modern research. 
From a differential-geometric perspective, they are characterized by the existence of singular Hermitian metrics whose curvature currents are semi-positive in the sense of currents. 
Multiplier ideal sheaves have become an essential tool for analyzing the singularities of such metrics and play an important role both in pluripotential theory and in applications to algebraic geometry. 
In this context, the following Hard Lefschetz theorem for pseudo-effective line bundles is a fundamental result in complex geometry and has applications to the study of cohomology groups, including vanishing and non-vanishing results, as well as injectivity theorems.

\begin{theorem}[{\cite[Theorem 0.1]{DPS01}}]\label{Theorem: Theorem 0.1 in DPS01}
    Let $X$ be a compact \kah manifold of dimension $n$ with a \kah metric $\omega$ and $L\longrightarrow X$ be a pseudo-effective line bundle, that is, $L$ admits a singular Hermitian metric $h$ with a semi-positive curvature current on $X$. 
    Then, the wedge multiplication operator $\omega^q\wedge\bullet$ induces a surjective morphism 
    \begin{align*}
        \Phi^q_{\omega,h}:H^0(X,\Omega_X^{n-q}\otimes L\otimes\scr{I}(h))\longrightarrow H^q(X,\Omega_X^n\otimes L\otimes \scr{I}(h))
    \end{align*}
    for every nonnegative integer $q$, where $\scr{I}(h)$ is the associated multiplier ideal sheaf. 
\end{theorem}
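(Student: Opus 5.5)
We follow the strategy of Demailly--Peternell--Schneider: represent a class in $H^q(X,\Omega_X^n\otimes L\otimes\scr{I}(h))$ by an $L^2$ harmonic-type object, and show it equals $\omega^q\wedge u$ for a holomorphic $(n-q)$-form $u$ that is $L^2$ with respect to $h$.

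\textbf{Step 1: $L^2$ description of the cohomology.} By the $L^2$ Dolbeault lemma (H\"ormander estimates with weight $h$), the sheaf $\Omega_X^n\otimes L\otimes\scr{I}(h)$ is resolved by the sheaves of $(n,\bullet)$-forms $f$ with $f$ and $\dbar f$ locally $L^2$ with respect to $h$ and $\omega$. Hence every class in $H^q(X,\Omega_X^n\otimes L\otimes\scr{I}(h))$ is represented by a $\dbar$-closed $(n,q)$-form $f$ that is $L^2$ for $h$. We may take $f$ smooth. A holomorphic section $u$ of $\Omega^{n-q}_X\otimes L\otimes\scr{I}(h)$ is sent to $\omega^q\wedge u$, which is $\dbar$-closed and $L^2$.

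\textbf{Step 2: regularization.} Since $h$ is singular, we apply Demailly's equisingular approximation: there are metrics $h_\varepsilon$, smooth on $X\setminus Z_\varepsilon$ for analytic sets $Z_\varepsilon$, increasing to $h$, with $i\Theta_{h_\varepsilon}\ge-\varepsilon\omega$ and $\scr{I}(h_\varepsilon)=\scr{I}(h)$. On the complete K\"ahler manifold $X\setminus Z_\varepsilon$ we use the Hodge decomposition for $L^2$ forms, $f=\alpha_\varepsilon+\dbar\beta_\varepsilon$ with $\alpha_\varepsilon$ harmonic. The Bochner--Kodaira--Nakano identity for $(n,q)$-forms gives $\|\dbar^*_{\varepsilon}\alpha\|^2+\|\dbar\alpha\|^2\ge\langle[i\Theta_{h_\varepsilon},\Lambda]\alpha,\alpha\rangle\ge -q\varepsilon\|\alpha\|^2$. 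This lets us choose $\beta_\varepsilon$ with $\|\dbar^*\alpha_\varepsilon\|\to0$ and $\|\beta_\varepsilon\|$ controlled up to an $O(\sqrt\varepsilon)$ error.

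\textbf{Step 3: pulling back by Lefschetz.} Pointwise, $\omega^q\wedge\bullet:\Lambda^{n-q,0}\to\Lambda^{n,q}$ is an isometric isomorphism up to a constant. So we write $\alpha_\varepsilon=\omega^q\wedge u_\varepsilon$ with $u_\varepsilon$ an $(n-q,0)$-form, and $\|u_\varepsilon\|\le C\|f\|$ uniformly in $\varepsilon$. Since $\omega$ is K\"ahler, $\dbar^*$ commutes with $\omega^q\wedge$ via the identities $[\Lambda,\dbar]=-iD'^*$. The near-harmonicity therefore translates into $\|\dbar u_\varepsilon\|\to0$ (with $D'^*$-type terms being controlled by the curvature bound).

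Taking weak limits as $\varepsilon\to0$, and using monotonicity $h_\varepsilon\le h$ to get $L^2$ bounds with respect to each fixed $h_{\varepsilon_0}$, we obtain $u$ with $\dbar u=0$. So $u$ is a holomorphic section of $\Omega^{n-q}_X\otimes L$ that is $L^2$ for $h$, i.e. a section of $\Omega_X^{n-q}\otimes L\otimes\scr{I}(h)$; Riemann extension across $Z_\varepsilon$ holds for $L^2$ holomorphic forms. We also get $f-\omega^q\wedge u=\lim\dbar\beta_\varepsilon$ weakly. Closedness of the range of $\dbar$ in the $L^2$ cohomology, which we get from the finite-dimensionality of sheaf cohomology on compact $X$, then shows $[f]=\Phi^q_{\omega,h}(u)$.

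\textbf{Main obstacle.} The delicate point is Step 2--3: the curvature error $-\varepsilon\omega$ makes the $\beta_\varepsilon$ blow up like $\varepsilon^{-1/2}$. We need the product of $\sqrt\varepsilon$ with the solution norms to stay bounded so that the limit exists and the coboundary part converges. We would handle this by solving $\dbar\beta_\varepsilon=f-\alpha_\varepsilon$ with the twisted estimate $\|\beta_\varepsilon\|^2\le (q\varepsilon)^{-1}\cdots$ and $\|\dbar^*\alpha_\varepsilon\|^2\le q\varepsilon\|f\|^2$, exactly as in DPS, together with the weak-limit argument above.
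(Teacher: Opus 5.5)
The paper only quotes this theorem from DPS. Its proof of Theorem~\ref{Theorem: Hard Lefschetz if TX>0 on D} reproduces the DPS argument, and your outline has the same architecture: equisingular approximation, a harmonic representative off $Z_\varepsilon$, pointwise Lefschetz, weak limits using $h_\varepsilon\le h$, and closed range to identify the class. However, the one estimate that makes the argument work is not established.

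\textbf{The key estimate is missing.}
\begin{itemize}
\item In Step~2, the Bochner--Kodaira--Nakano inequality applied to a harmonic $(n,q)$-form only yields $0\ge -q\varepsilon\|\alpha_\varepsilon\|^2$, which carries no information. ``Choosing $\beta_\varepsilon$ with $\|\dbar^*\alpha_\varepsilon\|\to0$'' has no content, since $\dbar^*\alpha_\varepsilon=0$.
\item In Step~3, $\dbar^*$ does not commute with $L^q=\omega^q\wedge\bullet$. The adjoint of $[\Lambda,\dbar]=-iD'^*$ is $[\dbar^*,L]=iD'$, so $\dbar^*(\omega^q\wedge v)=iq\,\omega^{q-1}\wedge D'v$.
\item Nor does $\dbar(\omega^q\wedge v)=\omega^q\wedge\dbar v=0$ force $\dbar v=0$, because $L^q$ kills primitive forms of degree $n-q+1$.
\end{itemize}
What is needed is the identity of Proposition~\ref{Proposition: Prop in DPS01} (DPS, Prop.~2.3.3). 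For harmonic $u=\omega^q\wedge v$ it bounds $(q!)^2\|\dbar v\|^2$ by a constant times $\varepsilon\|u\|^2$.

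Your hint about ``$D'^*$-type terms'' can be made into this. Since $\Lambda v=0$, one has $\Lambda\dbar v=-iD'^*v$, and the Lefschetz decomposition of $\dbar v$ then shows $\dbar u=0$ if and only if $D'^*v=0$. So harmonicity of $u$ means $D'v=D'^*v=0$. Bochner--Kodaira--Nakano for the $(n-q,0)$-form $v$, with a cut-off argument on the complete manifold, then gives $\|\dbar v\|^2\le q\varepsilon\|v\|^2$. As written, though, this step, which is the heart of the theorem, is absent.

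Your ``main obstacle'' is also misdiagnosed. In this scheme no primitive $\beta_\varepsilon$ is ever estimated. The classes of the harmonic representatives and of their weak limits are transported by continuity of the map from $L^2$ cocycles to the finite-dimensional cohomology group.

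\textbf{The analytic set-up also has to be supplied.}
\begin{itemize}
\item Since $\omega$ is not complete on $X\setminus Z_\varepsilon$, the harmonic theory must be run for the complete metrics $\omega_{\varepsilon,\delta}=\omega+\delta(\omega+\idd\psi_\varepsilon)$. The Lefschetz decomposition must be taken for the same metric, $u_{\varepsilon,\delta}=\omega_{\varepsilon,\delta}^q\wedge v_{\varepsilon,\delta}$, since the Kähler identities involved are those of $\omega_{\varepsilon,\delta}$, not $\omega$.
\item This is followed by a limit $\delta\to0$ that uses Lemma~\ref{Lemma: inequality of (n,0)-forms}. As the metric grows, $L^2$ norms of $(n,q)$-forms decrease and those of $(n-q,0)$-forms increase. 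This gives a uniform bound on $\|u_{\varepsilon,\delta}\|$ in terms of $\|f\|_{h,\omega}$, and makes the $\omega$-norm the weakest one, in which $v_{\varepsilon,\delta}$ converges.
\item Deriving closed range from finite-dimensionality presupposes that the $L^2$ complex for $(h_\varepsilon,\omega_{\varepsilon,\delta})$ on $X\setminus Z_\varepsilon$ resolves $K_X\otimes L\otimes\scr{I}(h)$. Only then is $f$ cohomologous to its harmonic part in sheaf cohomology.
\end{itemize}
This last point is exactly where the paper finds a gap in DPS. Near $Z_\varepsilon$, the curvature of $h_\varepsilon$, even after adding a local weight $C|z|^2$, is not bounded below by a positive multiple of the complete metric, so the local $L^2$ estimate is unavailable. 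The paper repairs this by replacing $h_\varepsilon$ with $h_\varepsilon e^{-\varrho\psi_\varepsilon}$ for small $\varrho>0$, chosen via strong openness so that:
\begin{itemize}
\item $\scr{I}(h_\varepsilon e^{-\varrho\psi_\varepsilon})=\scr{I}(h)$;
\item $\varrho\,\idd\psi_\varepsilon\ge-\varepsilon\omega$;
\item the norm of $f$ at most doubles.
\end{itemize}
See conditions (v)--(vi) of Setup~\ref{Setup} and Lemma~\ref{Lemma: log resolusion for (n,q)-forms}.
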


The special case when $L$ is nef is due to Takegoshi \cite{Tak97} (for the definition of nef in the analytic setting, cf. \cite{Dem12}). 
In particular, the multiplier ideal sheaf \(\scr{I}(h)\) in Theorem \ref{Theorem: Theorem 0.1 in DPS01} is not merely a technical assumption but is essential. 
Indeed, when \(L\) is nef, there are examples in which a multiplier ideal sheaf \(\scr{I}(h)\neq\cal{O}_X\) plays an essential role; in such cases, the map \(\Phi^q_{\omega,h}\) is not surjective if the multiplier ideal sheaf is not taken into account and is replaced by $\cal{O}_X$ (see \cite[$\S$2.5 or Proposition 2.5.1]{DPS01}).
In the further special case where $L$ is semi-positive, the multiplier ideal sheaf $\scr{I}(h)$ coincides with $\cal{O}_X$, and we obtain the result of \cite[Corollary 2.1.2]{DPS01}, which was already observed by Enoki \cite{Eno93} and Mourougane \cite{Mou95}. 
Subsequently, results on closedness and harmonicity that complement Theorem \ref{Theorem: Theorem 0.1 in DPS01} were obtained by Wu \cite{Wu21}.

In this paper, using the recently established analytic logarithmic \(L^2\)-Dolbeault resolution (see \cite{HLWY23,Wat26a}), we extend Theorem \ref{Theorem: Theorem 0.1 in DPS01} to the setting of logarithmic sheaves.
To obtain this logarithmic \(L^2\)-Dolbeault resolution, a specific choice of singular Hermitian metric is required, which makes it difficult to take \(L^2\)-weak limits along an approximation process while controlling the resolution. 
We overcome this difficulty by applying Demailly's convolution-type regularization \cite{Dem82} via the exponential map \(\exp:T_X\longrightarrow X\) using a kernel symmetric with respect to a \kah metric, thereby constructing a sequence of singular Hermitian metrics that increases to \(h\) and preserves the resolution appropriately.

\begin{theorem}\label{Theorem: Hard Lefschetz if TX>0 on D}
    Let $X$ be a compact \kah manifold of dimension $n$ and $D=\sum^J_{j=1}D_j$ be a simple normal crossing divisor. 
    Let $\omega_P$ be a smooth \kah metric on $X\setminus D$ which is of Poincar\'{e}-type along $D$.
    Let $L\longrightarrow X$ be a pseudo-effective line bundle, that is, $L$ admits a singular Hermitian metric $h$ with semi-positive curvature current on $X$. 
    If the tangent bundle $T_X$ is Griffiths semi-positive on $D$, then the wedge multiplication operator $\omega_P^q\wedge\bullet$ induces a surjective morphism 
    \begin{align*}
        \Phi^q_{\omega_P,h}:H^0(X,\Omega_X^{n-q}(\log D)\otimes L\otimes\scr{I}(h))\longrightarrow H^q(X,\Omega_X^n(\log D)\otimes L\otimes \scr{I}(h))
    \end{align*}
    for every nonnegative integer $q$.
\end{theorem}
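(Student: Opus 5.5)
We follow the strategy of \cite{DPS01}, replacing the ordinary $L^2$-Dolbeault resolution of $\Omega^n_X\otimes L\otimes\scr{I}(h)$ by the analytic logarithmic $L^2$-Dolbeault resolution of \cite{HLWY23,Wat26a}.

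\textbf{Step 1: realizing a class by a harmonic form.} We fix the complete \kah metric $\omega_P$ on $X\setminus D$ and a metric of the type required by that resolution. Concretely, this is $h_D=h\cdot\prod_j|\sigma_j|^{2}(\log|\sigma_j|^{-2})^{\nu}$-type twisting near $D$, or more precisely the metric $\hbar^{L}_{D,\nu}$ used there. With this choice, $H^q(X,\Omega_X^n(\log D)\otimes L\otimes\scr{I}(h))$ is computed by the $L^2$ $\dbar$-complex of $(n,\bullet)$-forms on $X\setminus D$ with respect to $(\omega_P,\hbar^L_{D,\nu})$. A class is then represented by a $\dbar$-closed $L^2$ form $u$ of type $(n,q)$.

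\textbf{Step 2: approximation of the singular metric.} We approximate $h$ by metrics $h_\varepsilon$ with mild singularities whose curvature satisfies $\iO{L,h_\varepsilon}\ge-\varepsilon\omega$ and which increase to $h$. We obtain them by Demailly's regularization via the exponential map with a kernel symmetric with respect to a \kah metric. The symmetry of the kernel, together with the hypothesis that $T_X$ is Griffiths semi-positive on $D$, is what should ensure two things: the loss of positivity is controlled uniformly near $D$, and the Poincaré-type weights along $D$ are preserved, so each $h_\varepsilon$ still yields the same logarithmic $L^2$-resolution with $\scr{I}(h_\varepsilon)\supseteq\scr{I}(h)$. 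For each $\varepsilon$ we solve the twisted equation with the Bochner--Kodaira--Nakano identity on the complete manifold $(X\setminus D,\omega_P)$. This gives $u=\dbar v_\varepsilon+\omega_P^q\wedge\beta_\varepsilon$, where $\beta_\varepsilon$ is an $(n-q,0)$-form, together with the estimates
\[
\|\dbar^*_\varepsilon u_\varepsilon\|\to0,\qquad \|\dbar\beta_\varepsilon\|^2\le C\varepsilon\|u\|^2,\qquad \|\beta_\varepsilon\|\le C\|u\|.
\]
Here we use that on $(n,q)$-forms the Lefschetz operator $\omega_P^q\wedge\bullet$ is an isometry up to constants from $(n-q,0)$-forms, and that the curvature term $[\iO{L,h_\varepsilon},\Lambda_{\omega_P}]$ is bounded below by $-q\varepsilon$. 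Since $\omega_P$ dominates $\omega$, this bound holds in the $\omega_P$-norm as well.

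\textbf{Step 3: passing to the limit.} Because $h_\varepsilon\uparrow h$, the norms with respect to $h_\varepsilon$ are bounded by those for $h$. Hence the family $\beta_\varepsilon$ is bounded in $L^2$ for every fixed $h_{\varepsilon_0}$, and we extract a weak limit $\beta$ by a diagonal argument. By monotone convergence $\beta$ is $L^2$ with respect to $h$. Moreover $\dbar\beta=0$, so $\beta$ is holomorphic on $X\setminus D$ and $L^2$ for $(\omega_P,\hbar^L_{D,\nu})$. By the logarithmic $L^2$ extension contained in the resolution statement, it defines a section of $\Omega^{n-q}_X(\log D)\otimes L\otimes\scr{I}(h)$. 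Finally, $u-\omega_P^q\wedge\beta$ is a weak limit of $\dbar$-exact forms, and the range of $\dbar$ is closed in $L^2$ because the cohomology is finite dimensional. So $u-\omega_P^q\wedge\beta$ is $\dbar$-exact, and $\Phi^q_{\omega_P,h}$ is surjective.

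\textbf{Main obstacle.} We expect the crux to be Step 2: producing regularizations $h_\varepsilon$ that increase to $h$, have curvature loss $O(\varepsilon)$ uniformly with respect to the non-compact metric $\omega_P$, and remain compatible with the logarithmic resolution near $D$. The first attempt is to carry out the exponential-map convolution with a kernel adapted to $\omega$. We would then estimate the Hessian error term, which involves the curvature of $T_X$. Griffiths semi-positivity on $D$ makes the sign favorable there, so the error is absorbed. Away from $D$ the error is compactly controlled.
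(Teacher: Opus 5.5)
Your plan has a genuine gap, and it sits exactly at the step you flag as the main obstacle: you apply the exponential-map regularization to the wrong metric, and you put the hypothesis on $T_X|_D$ in the wrong place.

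First, your curvature bound in Step 2 only accounts for $\iO{L,h_\varepsilon}$. The Bochner formula (Proposition~\ref{Proposition: Prop in DPS01}), however, has to be applied to the log-twisted metric $h_\varepsilon\prod_j|\sigma_j|^2_{h_j}(\log\cdots)^{2\alpha}$ that the logarithmic resolution requires. Its curvature contains the term $\sum_j\bigl(1+\frac{2\alpha}{\log(\cdots)}\bigr)i\Theta_{\Ox{D_j},h_j}$. For fixed smooth $h_j$ this term is only bounded below by $-C\omega$ with $C$ independent of $\varepsilon$, because the $\Ox{D_j}$ are not assumed semi-positive. So the estimate $\|\dbar\beta_\varepsilon\|^2\le C\varepsilon\|u\|^2$ does not follow, and with it you lose $\dbar\beta=0$.

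The missing idea is to regularize, via Theorem~\ref{Theorem: Demailly approximation of smooth functions}, the weights $\varphi_j=\log|\sigma_j|^2_{h_j}$ of the semi-positive singular metrics $1/|\sigma_j|^2$ on $\Ox{D_j}$. This produces smooth metrics $h_{j,\nu}$ with three properties:
\begin{itemize}
\item they are smooth, so the logarithmic resolution still holds for each $\nu$;
\item $|\sigma_j|^2_{h_{j,\nu}}\nearrow1$, so the twisted metrics increase to $h$;
\item the curvature loss $\lambda_{j,\nu}$ tends to $\tau_-(x)\nu(\varphi_j,x)$ by Lemma~\ref{Lemma: Lemme 8.6 in Dem82}.
\end{itemize}
Since $E_{+}(\varphi_j)=D_j$, Griffiths semi-positivity of $T_X$ on $D$ is exactly what makes $\lambda_{j,\nu}\searrow0$ everywhere, hence uniformly on $X$. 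This is the only place the hypothesis enters.

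Second, the convolution regularization cannot be used for $h$ itself. It produces smooth $h_\varepsilon$, so $\scr{I}(h_\varepsilon)=\cal{O}_X$. The resolution attached to $h_\varepsilon$ therefore computes $H^q(X,\Omega^n_X(\log D)\otimes L)$, not the group you started with; "the same logarithmic resolution" is false here. Your closed-range argument in $L^2(h_{\varepsilon_0})$ thus only gives $u\equiv\omega_P^q\wedge\beta$ in $H^q(X,\Omega^n_X(\log D)\otimes L)$. The natural map into that group from $H^q(X,\Omega^n_X(\log D)\otimes L\otimes\scr{I}(h))$ need not be injective.

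Moreover, for the weight $\varphi$ of $h$, the loss of positivity tends to $\tau_-(x)\nu(\varphi,x)$ on $E_{+}(\varphi)$. That set is unrelated to $D$, so your hypothesis does not control this loss.

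The paper instead takes the ideal-preserving approximation $h_\nu$ of Theorem~\ref{Theorem: Demailly approximation preserves ideal sheaves}, which has analytic singularities along $Z_\nu$. This in turn forces three further steps:
\begin{itemize}
\item working on $X\setminus(D\cup Z_\nu)$ with the complete metrics $\omega_P+\delta(\omega+\idd\psi_\nu)$;
\item twisting by $e^{-\varrho_\nu\psi_\nu}$, with $\varrho_\nu$ chosen via strong openness so that the multiplier ideal is unchanged, to keep the logarithmic $L^2$-resolution exact for these metrics;
\item taking the double limit $\delta\to0$, then $\nu\to+\infty$.
\end{itemize}
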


Here, the map $\Phi^q_{\omega_P,h}$ is well-defined (see Proposition \ref{Proposition: well-definedness of Phi}).
As an analogue of Theorem \ref{Theorem: Hard Lefschetz if TX>0 on D}, the same surjectivity result for \(\Phi^q_{\omega_P,h}\) holds if each pseudo-effective line bundle \(\Ox{D_j}\) admits a minimal singular Hermitian metric whose positive Lelong number locus is empty (see Corollary \ref{Corollary: Hard Lefschetz for h_min}).
We then consider the relationship with the map \(\times \sigma_D\) induced by multiplication by the defining section \(\sigma_D\) of \(D\).
\[
\begin{tikzcd}[
  row sep=normal,
  column sep=tiny
]
\hspace{-4mm}H^0(X,\Omega_X^{n-q}\!\otimes\!\cal{O}_{\!X\!}(D)\!\otimes\! L\otimes\!\scr{I}(h))\!\! \arrow[rrrrrrrrrr, "\omega^q\wedge\,\bullet", "\Phi^q_{\omega,h,D}"'] & & & & & & & & & & \!H^q(X,\Omega_X^n(\log D)\!\otimes\! L\otimes\!\scr{I}(h)) \\
& \hspace{-28mm}H^0(X,\Omega_X^{n-q}(\log D)\!\otimes\! L\otimes\!\scr{I}(h))\hspace{-10mm} \arrow[ul, hookrightarrow] \arrow[urrrrrrrrr, "\omega^q_P\wedge\,\bullet"'] & & & & & & & & & &\\
\hspace{-6mm}H^0(X,\Omega_X^{n-q}\!\otimes\! L\otimes\!\scr{I}(h)) \arrow[rrrrrrrrrr, twoheadrightarrow, "\omega^q\wedge\,\bullet"] \arrow[uu, hookrightarrow, "\times\sigma_D"] \arrow[ur, hookrightarrow] & & & & & & & & & & H^q(X,\Omega_X^n\!\otimes\! L\otimes\!\scr{I}(h)). \arrow[uu, "\times\sigma_D\,:=\,\Phi_{\sigma_D}"']
\end{tikzcd}
\]
In Section \ref{Section: Proofs}, based on the proof and observations concerning Theorem \ref{Theorem: Hard Lefschetz if TX>0 on D}, we propose the conjecture that, without the assumption on \(D\), the map \(\Phi^q_{\omega_P,h}\) is surjective when its target is restricted to \(\rom{Im}\,\Phi_{\sigma_D}=\rom{Im}(\times \sigma_D)\) (see Conjecture \ref{Conjecture: Conjecture without assumption of D}). 

Note that $\Omega^n_X(\log D)\simeq K_X\otimes\Ox{D}$. 
If \(\Ox{D}\) is semi-positive, that is, it admits a smooth Hermitian metric $h_D$ with semi-positive curvature, then \(\Phi^q_{\omega,h,D}\) is surjective by Theorem \ref{Theorem: Theorem 0.1 in DPS01}.
Furthermore, if it is additionally assumed that $\iO{L,h}\geq\varepsilon\iO{\cal{O}_{\!X\!}(D),h_D}$ in the sense of currents for some $\varepsilon>0$, then $\Phi_{\sigma_D}$ is injective by the Enoki-type injectivity theorem (see \cite{FM21}).
Taking these facts into account, by imposing an appropriate semi-positivity assumption on \(\Ox{D}\), we obtain the surjectivity of \(\Phi^q_{\omega_P,h}\) as follows.

\begin{theorem}\label{Theorem: Hard Lefschetz for semi-positivity}
    Let $X,D,\omega_P,L,h$ be as in the setting of Theorem \ref{Theorem: Hard Lefschetz if TX>0 on D}.
    If each line bundle $\cal{O}_X(D_j)$ is semi-positive, then the wedge multiplication operator $\omega_P^q\wedge\bullet$ induces a surjective morphism 
    \begin{align*}
        \Phi^q_{\omega_P,h}:H^0(X,\Omega_X^{n-q}(\log D)\otimes L\otimes\scr{I}(h))\longrightarrow H^q(X,\Omega_X^n(\log D)\otimes L\otimes \scr{I}(h))
    \end{align*}
    for every nonnegative integer $q$.
\end{theorem}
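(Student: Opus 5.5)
The plan is to run the proof of Theorem \ref{Theorem: Hard Lefschetz if TX>0 on D} again, but to use the semi-positivity of each $\Ox{D_j}$ exactly where that proof used the Griffiths semi-positivity of $T_X$ along $D$. Recall the structure of that argument. First, the analytic logarithmic $L^2$-Dolbeault resolution of \cite{HLWY23,Wat26a} is used to represent a class $[u]\in H^q(X,\Omega_X^n(\log D)\otimes L\otimes\scr{I}(h))$ by an $L^2$ $(n,q)$-form on $X\setminus D$. The $L^2$ condition is taken with respect to $\omega_P$ and a twisted metric of the form $h\cdot\prod_j|\sigma_j|^{2}_{h_j}\,(\log|\sigma_j|^2_{h_j})^{\nu}$ (or similar), where $h_j$ is a metric on $\Ox{D_j}$. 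Second, $h$ is approximated by the regularization via $\exp:T_X\to X$, which produces metrics $h_\varepsilon\nearrow h$ with $\iO{L,h_\varepsilon}\ge-\varepsilon\omega$ that preserve the resolution. Third, one runs the Bochner--Kodaira--Nakano / DPS harmonic-representative argument for each $\varepsilon$: take the harmonic representative $u_\varepsilon$, write $u_\varepsilon=\omega_P^q\wedge v_\varepsilon$ with $\dbar^*_\varepsilon u_\varepsilon$ small, and pass to a weak limit $v$. The limit $v$ is a holomorphic section of $\Omega^{n-q}_X(\log D)\otimes L\otimes\scr{I}(h)$ with $\Phi^q_{\omega_P,h}(v)=[u]$.

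The key point is to locate where positivity near $D$ enters the second and third steps. The Bochner--Kodaira--Nakano error term consists of the curvature of $L$ with the twisted metric together with the torsion and curvature of $\omega_P$. When the Poincaré-type metric is built from $h_j$, that is $\omega_P=\omega-\sum_j\idd\log(-\log|\sigma_j|^2_{h_j})$, the correction terms coming from the $\log$-weights produce the combinations $\iO{\Ox{D_j},h_j}$ with coefficients that go to zero, plus positive terms. I will choose $h_j$ to be the smooth semi-positive metrics given by the hypothesis. Then $\iO{\Ox{D_j},h_j}\ge0$, so the twisted curvature
\[
\iO{L,h_\varepsilon}+\sum_j\iO{\Ox{D_j},h_j}+\sum_j\idd\bigl(-\nu\log(-\log|\sigma_j|^2_{h_j})\bigr)
\]
is bounded below by $-\varepsilon\omega_P$ up to terms that are nonnegative or tend to $0$. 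This plays the role that Griffiths semi-positivity of $T_X|_D$ played in Theorem \ref{Theorem: Hard Lefschetz if TX>0 on D}, where it was needed to control the second fundamental form of $D_j$ and the curvature of $\Ox{D_j}$ restricted to $D$. Before continuing, I will check that the resolution and the well-definedness statement (Proposition \ref{Proposition: well-definedness of Phi}) allow $h_j$ to be any smooth metric, so that this choice is legitimate.

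With this estimate in place, the remainder follows the DPS scheme. The uniform bound $\|v_\varepsilon\|\le C\|u\|$ comes from the pointwise Lefschetz isomorphism for $\omega_P$, which is complete and of bounded geometry. The estimate $\|\dbar^*_\varepsilon u_\varepsilon\|\to0$ comes from the Bochner formula with the lower bound above. Holomorphicity of the limit comes from $\dbar v=0$ in the limit together with $L^2$ extension across $D$, because $L^2$ for Poincaré-type metrics forces at most logarithmic poles.

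The step I expect to be the main obstacle is the curvature estimate near $D$ at the crossings, where several $\log$-weights interact. There, cross terms of the form $\partial\log|\sigma_j|^2\wedge\dbar\log|\sigma_k|^2$ appear, and these must be absorbed using the $(\log)^{-2}$ factors. Here I would first try to reuse the local computation from the proof of Theorem \ref{Theorem: Hard Lefschetz if TX>0 on D} verbatim and isolate the single term that was handled there by $T_X|_D\ge0$; I would then check that this term is exactly $\iO{\Ox{D_j},h_j}$ (restricted to $D$), which is now nonnegative by hypothesis.
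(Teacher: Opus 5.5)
Your central idea matches the paper: fix the smooth semi-positive metrics $h_j$ on $\Ox{D_j}$ in the twisted metric, so that the metrics on $\Ox{D_j}$ need no regularization at all.

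You misdescribe, however, where the hypothesis of Theorem~\ref{Theorem: Hard Lefschetz if TX>0 on D} enters. There the Griffiths condition on $T_X|_D$ is used only through Lemma~\ref{Lemma: Lemme 8.6 in Dem82}. Its role is to make the curvature loss $\lambda_{j,\nu}$ of the convolution regularization of the weights $\log|\sigma_j|^2_{h_j}$ (Lelong number $1$ along $D_j$) tend to $0$ uniformly; it has nothing to do with second fundamental forms.

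Your curvature step is also imprecise. The term $\frac{2\alpha}{\log(\vartheta_j|\sigma_j|^2_{h_j})}\,i\Theta_{\Ox{D_j},h_j}$ is negative and independent of the approximation parameter, so it does not ``tend to $0$''. It has to be absorbed by $i\Theta_{\Ox{D_j},h_j}$ itself, which needs $1+2\alpha/\log(\vartheta_j|\sigma_j|^2_{h_j})>0$ on all of $X$. The paper gets this from the normalization $\vartheta_j=e^{-2\alpha}/(2\max_X|\sigma_j|^2_{h_j})$. The crossing cross terms you expect as the main obstacle do not occur: the curvature of the twist is a sum over $j$ of separate terms, and each $i\partial\log|\sigma_j|^2\wedge\dbar\log|\sigma_j|^2$ term is nonnegative.

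\textbf{The genuine gap is the approximation of $h$.} You use smooth convolution regularizations $h_\varepsilon\nearrow h$ with $\iO{L,h_\varepsilon}\ge-\varepsilon\omega$ that ``preserve the resolution''. Both claims fail.
\begin{itemize}
\item A smooth $h_\varepsilon$ has $\scr{I}(h_\varepsilon)=\cal{O}_X$, so the level-$\varepsilon$ resolution is of $\Omega^n_X(\log D)\otimes L$. In the limit you only get $\omega_P^q\wedge v\equiv\beta$ in $H^q(X,\Omega^n_X(\log D)\otimes L)$. The map $H^q(X,\Omega^n_X(\log D)\otimes L\otimes\scr{I}(h))\to H^q(X,\Omega^n_X(\log D)\otimes L)$ need not be injective, so this does not give surjectivity of $\Phi^q_{\omega_P,h}$.
\item By Lemma~\ref{Lemma: Lemme 8.6 in Dem82}, the curvature loss of that regularization tends to $\tau_{-}(x)\nu(\varphi,x)$. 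This need not vanish where $h$ has positive Lelong numbers, since nothing is assumed on $T_X$.
\end{itemize}

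The paper instead uses the equisingular approximation of Theorem~\ref{Theorem: Demailly approximation preserves ideal sheaves}: $\scr{I}(h_\nu)=\scr{I}(h)$ and $\iO{L,h_\nu}\ge-\varepsilon_\nu\omega$, but $h_\nu$ is singular along $Z_\nu$. The real work lies here, and it is absent from your plan:
\begin{itemize}
\item Harmonic representatives are taken for the complete metrics $\omega_{P,\nu,\delta}=\omega_P+\delta(\omega+\idd\psi_\nu)$ on $X\setminus(D\cup Z_\nu)$.
\item The metric is further twisted by $e^{-\varrho_\nu\psi_\nu}$. Here $\varrho_\nu$ is chosen via strong openness so that the multiplier ideal is unchanged, $\varrho_\nu\idd\psi_\nu\ge-\varepsilon_\nu\omega$, and $\|\beta\|^2_{\varrho_\nu}\le2\|\beta\|^2$.
\item This extra weight is what keeps the logarithmic $L^2$ resolution exact for $(\hbar^{L,\psi}_{D,\nu},\omega_{P,\nu,\delta})$ (Lemma~\ref{Lemma: log resolusion for (n,q)-forms}).
\item One then takes the double limit $\delta\to0$, $\nu\to+\infty$, using that the $L^2$ norms on $(n-q,0)$- and $(n,q)$-forms move in opposite directions in $\delta$.
\end{itemize}
With this approximation your curvature estimate becomes $\iO{L,\hbar^{L,\psi}_{D,\nu}}\ge-2\varepsilon_\nu\,\omega_{P,\nu,\delta}$, and the argument closes as in the paper.

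Two smaller corrections. For harmonic $u$ one has $\dbar^*u=0$ exactly; what the Bochner formula makes small is $\|\dbar v\|$. Also, for $(n,q)$-forms on a K\"ahler manifold, no torsion or curvature of $\omega_P$ enters the formula.
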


Finally, we show that the multiplier ideal sheaf plays an essential role even in the setting of logarithmic sheaves (see Remark \ref{Remark: counterexample for nef case}). 
To this end, following \cite{DPS01}, an example (see Counterexample \ref{Counterexample: counterexample following DPS01}) is given in which the surjectivity fails when $L$ and $D$ are nef and the multiplier ideal sheaf is not taken into account and is replaced by $\cal{O}_X$.
Here, the assumptions on \(D\) in Theorems \ref{Theorem: Hard Lefschetz if TX>0 on D} and \ref{Theorem: Hard Lefschetz for semi-positivity} are stronger than nefness (see Remark \ref{Remark: TX Grif>0 and nefness}), and thus the following problem naturally arises.

\begin{problem}
    Does Theorem \ref{Theorem: Hard Lefschetz if TX>0 on D} or \ref{Theorem: Hard Lefschetz for semi-positivity} still hold if \(D\) is simply assumed to be nef, or is there a counterexample?
\end{problem}

\section{Preliminaries}

\subsection{A Bochner-type formula}

Let $(L,h)$ be a smooth Hermitian line bundle on a (non necessarily compact) \kah manifold $(Y,\omega)$. 
We denote by $|\bullet|_{h,\omega}=|\bullet|_{\wedge^{p,q}\omega\otimes h}$ the pointwise Hermitian norm on $\bigwedge^{p,q}T^*_Y\otimes L$ associated with $\omega$ and $h$, and by $||\bullet||_{h,\omega}$ the global $L^2$-norm 
$\displaystyle||u||^2_{h,\omega}=\int_Y|u|^2_{h,\omega}dV_\omega$, where $\displaystyle dV_\omega=\frac{\omega^n}{n!}$. 
We consider the $\dbar$-operator acting on $(p,q)$-forms with values in $L$, its adjoint $\dbar^*_{h,\omega}$ with respect to $h$ and $\omega$.  

According to \cite{DPS01}, for any smooth $L$-valued $(n-q,0)$-form $v$ with compact support, we have 
\begin{align*}
    \big(\dbar^*_{h,\omega}\dbar+\dbar\dbar^*_{h,\omega}\big)(\omega^q\wedge v)-\omega^q\wedge\big(\dbar^*_{h,\omega}\dbar v\big)=q\,\iO{L,h}\wedge\omega^{q-1}\wedge v.
\end{align*}
Combining this with $\lara{\omega^q\wedge\bullet}{\omega^q\wedge\bullet}_\omega=(q!)^2\lara{\bullet}{\bullet}_\omega$ for $(n-q,0)$-forms, which will be shown later, and $\Lambda_\omega(\omega^q\wedge v)=q(q+1)\omega^{q-1}\wedge v$, we obtain the following key equality.

\begin{proposition}[{\cite[Proposition 2.3.3]{DPS01}}]\label{Proposition: Prop in DPS01}
    Let $(Y,\omega)$ be a complete \kah manifold of dimension $n$ and $(L,h)$ be a smooth Hermitian line bundle such that the curvature possesses a uniform lower bounded $\iO{L,h}\geq-C\omega$. 
    For every measurable $(n-q,0)$-form $v$ with $L^2$-coefficients and values in $L$ such that $u=\omega^q\wedge v$ has differentials $\dbar u, \dbar^*_{h,\omega}u$ also in $L^2$, we have 
    \begin{align*}
        \big|\big|\dbar u\big|\big|^2_{h,\omega}+\big|\big|\dbar^*_{h,\omega}u\big|\big|^2_{h,\omega}=(q!)^2\big|\big|\dbar v\big|\big|^2_{h,\omega}+\frac{1}{q+1}\int_Y\sum_{|J|=q}\Bigl(\sum_{j\in J}\lambda_j\Bigr)|u_J|^2_{h,\omega}dV_\omega,
    \end{align*}
    where $\lambda_1,\ldots,\lambda_n$ are the curvature eigenvalues of $\iO{L,h}$ with respect to $\omega$.
\end{proposition}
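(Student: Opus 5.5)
First I will prove the identity for smooth compactly supported $v$, and then extend it to $L^2$ forms by a density argument that uses completeness of $\omega$.

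\textbf{Step 1: smooth compactly supported $v$.} Take the commutator identity recalled above from \cite{DPS01}:
\begin{align*}
\Delta''(\omega^q\wedge v)-\omega^q\wedge\dbar^*_{h,\omega}\dbar v=q\,\iO{L,h}\wedge\omega^{q-1}\wedge v .
\end{align*}
Pair it in $L^2$ with $u=\omega^q\wedge v$ and integrate by parts, which is legitimate because of the compact support. The left side becomes $\|\dbar u\|^2+\|\dbar^*u\|^2-\langle\!\langle \omega^q\wedge\dbar^*\dbar v,\omega^q\wedge v\rangle\!\rangle$.

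The pointwise isometry $\lara{\omega^q\wedge\alpha}{\omega^q\wedge\beta}_\omega=(q!)^2\lara{\alpha}{\beta}_\omega$ for $(n-q,0)$-forms turns the last term into $(q!)^2\|\dbar v\|^2$. To prove the isometry, I would compute in an orthonormal frame at a point. A primitive-type argument works, or one can check it directly: for $v=dz_K$ with $|K|=n-q$, the only surviving term of $\omega^q$ is $q!\,i^q\,dz_{J}\wedge d\bar z_{J}$ with $J=\complement K$, and distinct $K$ give orthogonal images.

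For the curvature term, diagonalize $\iO{L,h}=i\sum\lambda_j dz_j\wedge d\bar z_j$ and $\omega=i\sum dz_j\wedge d\bar z_j$ at a point. Write $u=\sum_{|J|=q}u_J\,(\ldots)$, with $u_J$ corresponding to $q!\,v_{\complement J}$. Then $\iO{L,h}\wedge\omega^{q-1}\wedge v$ pairs against $u$ to give $\frac1{q!}\cdot\frac{(q-1)!}{1}\sum_J(\sum_{j\in J}\lambda_j)|u_J|^2$, up to bookkeeping. Equivalently, I would use $\Lambda_\omega(\omega^q\wedge v)=q(q+1)\omega^{q-1}\wedge v$ to write $q\,\iO{L,h}\wedge\omega^{q-1}\wedge v=\frac{1}{q+1}[\iO{L,h},\Lambda_\omega]u$-type expression. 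The standard formula $\lara{[\iO{L,h},\Lambda]u}{u}=\sum_J(\sum_{j\in J}\lambda_j-\sum_{j}\lambda_j)|u_J|^2$ for $(n,q)$-forms then needs to be reconciled with the $(n-q,q)$ setting. I expect this constant bookkeeping to be the fiddly part, and I would do it by the direct frame computation rather than through the commutator.

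\textbf{Step 2: extension to $L^2$ forms.} Since $(Y,\omega)$ is complete, use cutoff functions $\chi_k$ with $|d\chi_k|_\omega\le 1/k$ together with Friedrichs mollification in charts. These give smooth compactly supported $v_k\to v$ such that $u_k=\omega^q\wedge v_k$ satisfies $u_k\to u$, $\dbar u_k\to\dbar u$ and $\dbar^*u_k\to\dbar^*u$ in $L^2$. The hypotheses that $\dbar u$ and $\dbar^* u$ lie in $L^2$ are exactly what make the cutoff error terms vanish. Because $\dbar$ commutes with $\omega^q\wedge$ and the isometry holds, $\dbar u_k=\omega^q\wedge\dbar v_k$, so $(q!)\|\dbar v_k\|$ converges to $(q!)\|\dbar v\|$.

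The curvature integrand is bounded below by $-Cq|u_J|^2$ thanks to $\iO{L,h}\ge -C\omega$. So Fatou's lemma applied to the shifted nonnegative integrand, together with the $L^2$ convergence of $u_k$, passes the identity to the limit. This yields the identity for $v$, where the integral is well defined in $(-\infty,+\infty]$ and in fact finite by the equality.

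The main obstacle is the density step: I need the convergence of $\dbar^*u_k$, which requires the usual Andreotti–Vesentini/Friedrichs argument for complete metrics. The lower curvature bound is needed only to make the limit of the curvature integral legitimate.
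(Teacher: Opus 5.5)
Your route is the paper's: the commutator identity of \cite{DPS01}, the isometry $|\omega^q\wedge v|=q!\,|v|$ on $(n-q,0)$-forms, the curvature pairing, and the cut-off/regularization extension that the paper leaves to \cite{DPS01}. However, the constant you postpone as ``bookkeeping'' is exactly where the printed statement breaks, so the argument cannot close as stated.

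\textbf{The constant.} For an $(n-q,0)$-form $v$ one has $\Lambda_\omega v=0$. The relation $[\Lambda_\omega,\omega\wedge\bullet]=(n-r-s)\,\mathrm{Id}$ on $(r,s)$-forms then gives $\Lambda_\omega(\omega^q\wedge v)=q\,\omega^{q-1}\wedge v$, not $q(q+1)\,\omega^{q-1}\wedge v$; already for $n=q=1$, $\Lambda_\omega\omega=|\omega|^2=1$. Hence $q\,\iO{L,h}\wedge\omega^{q-1}\wedge v=\iO{L,h}\wedge\Lambda_\omega u=[\iO{L,h},\Lambda_\omega]u$. Since $u$ has bidegree $(n,q)$, its pairing with $u$ is $\sum_{|J|=q}(\sum_{j\in J}\lambda_j)|u_J|^2$. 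There is no $(n-q,q)$ case to reconcile; the formula with $-\sum_j\lambda_j$ that you quote is the $(0,q)$ one.

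Your direct frame count $\frac{(q-1)!}{q!}$ for $\langle\iO{L,h}\wedge\omega^{q-1}\wedge v,u\rangle$ is in fact exact. Multiplied by the prefactor $q$, it gives the same coefficient $1$. So your method proves the identity with $\int_Y\sum_{|J|=q}(\sum_{j\in J}\lambda_j)|u_J|^2_{h,\omega}dV_\omega$ and no factor $\frac{1}{q+1}$. That factor comes from the paper's incorrect $\Lambda$-identity, and the printed version is false.

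A counterexample: take $Y=\bb{C}$, $\omega=i\,dz\wedge d\bar z$, $h=e^{-|z|^2}$ (so $\lambda\equiv1$), $n=q=1$, and $v$ a compactly supported function. Then $\dbar u=0$, and $\|\dbar^*_{h,\omega}u\|=\|D'v\|$ by $[\dbar^*_{h,\omega},\omega\wedge\bullet]=iD'$, where $D'$ is the $(1,0)$-part of the Chern connection. The statement would read $\|D'v\|^2=\|\dbar v\|^2+\frac12\|v\|^2$, whereas one integration by parts gives $\|D'v\|^2=\|\dbar v\|^2+\|v\|^2$. In the paper's later use this only replaces $\frac{q}{q+1}$ by $q$, so the main theorems are unaffected.

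\textbf{Step 2.} Step 2 contains a separate error. The isometry $|\omega^q\wedge\alpha|=q!\,|\alpha|$ holds only for $(n-q,0)$-forms. On $(n-q,1)$-forms, $\omega^q\wedge\bullet$ is not injective; for $q=n$, $\omega^n\wedge\dbar v=0$ identically. So $\dbar u_k=\omega^q\wedge\dbar v_k\to\dbar u$ tells you nothing about $\dbar v_k$. Moreover, $\dbar v\in L^2$ is not a hypothesis but part of the conclusion. Fatou alone also yields only that the left-hand side dominates the right-hand side (the direction used later in the paper), not equality.

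The repair is to apply the compactly supported identity to the differences $v_k-v_m$ and add $qC\|u_k-u_m\|^2$ to both sides. Then $(q!)^2\|\dbar(v_k-v_m)\|^2$ and the shifted curvature integral are both nonnegative. Both are bounded by $\|\dbar(u_k-u_m)\|^2+\|\dbar^*_{h,\omega}(u_k-u_m)\|^2+qC\|u_k-u_m\|^2\to0$. Thus $\dbar v_k$ converges in $L^2$, necessarily to $\dbar v$, and the curvature term converges too. Letting $k\to\infty$ in the identity then gives equality.
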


Lemma \ref{Lemma: inequality of (n,0)-forms} is obtained from straightforward computations. 

\begin{lemma}\label{Lemma: inequality of (n,0)-forms}
    Let $\gamma_1$ and $\gamma_2$ be Hermitian metrics on $X$ with $\gamma_1\geq\gamma_2$. 
    Then we have 
    \begin{itemize}
        \item the equality $|u|^2_{\gamma_1} dV_{\gamma_1}=|u|^2_{\gamma_2} dV_{\gamma_2}$ holds for any $(n,0)$-form $u$, 
        \item the inequality $|u|^2_{\gamma_1} dV_{\gamma_1}\leq|u|^2_{\gamma_2} dV_{\gamma_2}$ holds for any $(n,q)$-form $u$ and any $q\geq1$,
        \item the inequality $|u|^2_{\gamma_1} dV_{\gamma_1}\geq|u|^2_{\gamma_2} dV_{\gamma_2}$ holds for any $(p,0)$-form $u$ and any $p\geq1$.
    \end{itemize}
\end{lemma}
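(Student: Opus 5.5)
The computation is pointwise, so I fix a point $x\in X$ and choose coordinates in which $\gamma_2=i\sum_j dz_j\wedge d\bar z_j$ and $\gamma_1=i\sum_j a_j\,dz_j\wedge d\bar z_j$ are simultaneously diagonal. The hypothesis $\gamma_1\geq\gamma_2$ then says $a_j\geq1$ for all $j$. In these coordinates $dV_{\gamma_1}=\bigl(\prod_{j=1}^n a_j\bigr)dV_{\gamma_2}$. For multi-indices $I,J$, the pointwise norm of a monomial $dz_I\wedge d\bar z_J$ with respect to $\gamma_1$ is $\prod_{i\in I}a_i^{-1}\prod_{j\in J}a_j^{-1}$ times its $\gamma_2$-norm, since the dual metric on $T^*_X$ has eigenvalues $a_j^{-1}$. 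Distinct monomials are orthogonal for both metrics.

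Writing $u=\sum_{I,J}u_{IJ}\,dz_I\wedge d\bar z_J$ with $|I|=p$ and $|J|=q$, I obtain
\begin{align*}
|u|^2_{\gamma_1}dV_{\gamma_1}=\sum_{I,J}|u_{IJ}|^2\,\frac{\prod_{k=1}^n a_k}{\prod_{i\in I}a_i\prod_{j\in J}a_j}\,dV_{\gamma_2},
\end{align*}
while $|u|^2_{\gamma_2}dV_{\gamma_2}$ is the same sum with every weight equal to $1$. Each of the three claims therefore reduces to comparing the weight
\[
w_{IJ}=\frac{\prod_{k=1}^n a_k}{\prod_{i\in I}a_i\prod_{j\in J}a_j}
\]
with $1$, term by term.

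\emph{Case $(n,0)$.} Here $I=\{1,\dots,n\}$ and $J=\emptyset$, so $w_{IJ}=1$ and equality holds.

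\emph{Case $(n,q)$ with $q\geq1$.} Here again $I=\{1,\dots,n\}$, so $w_{IJ}=\prod_{j\in J}a_j^{-1}\leq1$. This gives $|u|^2_{\gamma_1}dV_{\gamma_1}\leq|u|^2_{\gamma_2}dV_{\gamma_2}$.

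\emph{Case $(p,0)$ with $p\geq1$.} Here $J=\emptyset$, so $w_{IJ}=\prod_{k\notin I}a_k\geq1$. This gives the reverse inequality $|u|^2_{\gamma_1}dV_{\gamma_1}\geq|u|^2_{\gamma_2}dV_{\gamma_2}$. For $p=n$ this is consistent with the equality in the first case.

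The only point requiring care is the normalization of the pointwise norm. Any convention in which $\lara{dz_I\wedge d\bar z_J}{dz_I\wedge d\bar z_J}$ is a fixed combinatorial constant times the product of the dual eigenvalues works, because that constant cancels when comparing the two metrics. Simultaneous diagonalization of two Hermitian forms, one of them positive definite, is standard. So I expect no genuine obstacle beyond keeping track of which factors $a_k^{-1}$ appear. The conclusions are pointwise identities and inequalities of volume forms, so they hold globally on $X$. Twisting $u$ by a Hermitian line bundle multiplies both sides by the same factor $|\cdot|^2_h$ and does not affect the argument.
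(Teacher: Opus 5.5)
Your argument is correct and is the pointwise ``straightforward computation'' the paper refers to without writing it out: simultaneously diagonalize $\gamma_1,\gamma_2$ at a point and compare the weight $\prod_k a_k/(\prod_{i\in I}a_i\prod_{j\in J}a_j)$ with $1$ in each bidegree. The same comparison, now with weight $\prod_{k\notin J}a_k\geq 1$, also handles the $(0,p)$-forms, which is the bidegree in which the paper later invokes the third item.
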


\subsection{Logarithmic $L^2$-Dolbeault resolusions and the map $\Phi^q_{h,\omega_P}$}

In this subsection, we introduce various (logarithmic) $L^2$-Dolbeault resolution. 
The $L^2$-Dolbeault resolusion involving multiplier ideal sheaves was first established for $(n,q)$-forms in order to prove Nadel vanishing and was later generalized to $(p,q)$-forms as follows.

Let $X$ be a complex manifold and $L\longrightarrow X$ be a holomorphic line bundle with a singular Hermitian metric $h$. 
We define the subsheaf $\scr{L}^{p,q}_{L,h}$ of germs of $(p,q)$-forms $u$ with values in $L$ and with measurable coefficients such that both $|u|^2_h$ and $\big|\dbar u\big|^2_h$ are locally integrable, here we see that $\scr{L}^{p,q}_{L,h}$ is a fine sheaf.

\begin{theorem}[{\cite[Theorem 5.3]{Wat25}, cf. \cite{Dem12}}]\label{Theorem: L2-Dolbeault resolusion}
    Let $L$ be a holomorphic line bundle on a complex manifold $X$ with a singular Hermitian metric $h$. 
    If the local weights of $h$ are quasi-plurisubharmonic, then for any $p\geq0$, the $L^2$-Dolbeault complex 
    \begin{align*}
        0\longrightarrow \Omega_X^p\otimes\cal{O}_X(L)\otimes\scr{I}(h)\hookrightarrow\scr{L}^{p,0}_{L,h}\overset{\dbar}{\longrightarrow}\scr{L}^{p,1}_{L,h}\overset{\dbar}{\longrightarrow}\scr{L}^{p,2}_{L,h}\overset{\dbar}{\longrightarrow}\cdots
    \end{align*}
    is exact on $X$. Thus, for any $p,q\geq0$, we have the $L^2$-Dolbeault isomorphism 
    \begin{align*}
        H^q(X,\Omega^p_X\otimes L\otimes\scr{I}(h))\cong H^q\big(\Gamma(X,\scr{L}^{p,\ast}_{L,h})\big).
    \end{align*}
\end{theorem}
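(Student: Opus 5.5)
We argue locally and then globalize by the de Rham–Weil isomorphism. The question is local on $X$, so fix a point $x\in X$, a coordinate ball $B\ni x$ on which $L$ is trivial, and write $h=e^{-\varphi}$ there. Since $\varphi$ is quasi-plurisubharmonic, after shrinking $B$ the function $\psi:=\varphi+C|z|^2$ is plurisubharmonic for some constant $C>0$. The factor $e^{\pm C|z|^2}$ is bounded above and below on $B$, so local integrability with respect to $h$ is the same as with respect to $e^{-\psi}$. The complex consists of fine sheaves: for a smooth cutoff $\chi$ and $u\in\scr{L}^{p,q}_{L,h}$ we have $\dbar(\chi u)=\dbar\chi\wedge u+\chi\dbar u$, which is again locally $L^2$ with respect to $h$. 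Exactness then gives the isomorphism $H^q(X,\Omega^p_X\otimes L\otimes\scr{I}(h))\cong H^q(\Gamma(X,\scr{L}^{p,\ast}_{L,h}))$ by the abstract de Rham–Weil theorem. It therefore remains to prove exactness in degree $0$ and in degrees $q\geq1$.

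\emph{Degree $0$.} Let $u\in\scr{L}^{p,0}_{L,h}$ with $\dbar u=0$ in the sense of distributions. Then $u$ is a holomorphic $p$-form, by ellipticity of $\dbar$ on functions applied to each coefficient $u_I$ in $u=\sum_I u_I\,dz_I$. The condition $\int|u|^2e^{-\varphi}<\infty$ locally says precisely that each $u_I$ is a germ of $\scr{I}(h)$. Conversely, every germ of $\Omega^p_X\otimes\cal{O}_X(L)\otimes\scr{I}(h)$ lies in $\scr{L}^{p,0}_{L,h}$, and it is $\dbar$-closed. Hence the kernel of $\dbar$ on $\scr{L}^{p,0}_{L,h}$ is exactly $\Omega^p_X\otimes\cal{O}_X(L)\otimes\scr{I}(h)$.

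\emph{Degree $q\geq1$.} Let $f\in\scr{L}^{p,q}_{L,h}(B)$ with $\dbar f=0$.
\begin{enumerate}
\item Since the metric on $B$ is flat, decompose $f=\sum_{|I|=p}dz_I\wedge f_I$ with each $f_I$ a $\dbar$-closed $(0,q)$-form. Each $f_I$ can be identified with the $(n,q)$-form $dz_1\wedge\cdots\wedge dz_n\wedge f_I$. This reduces the problem to $(n,q)$-forms on a ball with the Euclidean \kah metric.
\item Apply H\"ormander's $L^2$ estimate with the strictly plurisubharmonic weight $\psi+|z|^2$. This produces $u_I$ with $\dbar u_I=f_I$ and
\[\int_B|u_I|^2e^{-\psi-|z|^2}\leq\frac{1}{q}\int_B|f_I|^2e^{-\psi-|z|^2}<\infty.\]
\item Set $u=\sum_I(-1)^p dz_I\wedge u_I$. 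Then $\dbar u=f$, both $u$ and $\dbar u=f$ are locally $L^2$ with respect to $h$, and therefore $u\in\scr{L}^{p,q-1}_{L,h}$.
\end{enumerate}

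The main obstacle is step 2, because the weight $\psi$ may be singular and H\"ormander's theorem is usually stated for smooth weights. We handle it by regularization.
\begin{itemize}
\item On a slightly smaller ball, convolve $\psi$ with a radial mollifier. This gives smooth plurisubharmonic functions $\psi_\varepsilon$ that decrease to $\psi$ as $\varepsilon\to0$.
\item Solve $\dbar u_\varepsilon=f_I$ with weight $\psi_\varepsilon+|z|^2$. Since $e^{-\psi_\varepsilon}\leq e^{-\psi}$, the right-hand side of the estimate is bounded uniformly in $\varepsilon$ by $\frac1q\int|f_I|^2e^{-\psi-|z|^2}$.
\item Fix $\varepsilon_0$. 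For $\varepsilon\leq\varepsilon_0$ the weight $e^{-\psi_{\varepsilon_0}}$ is dominated by $e^{-\psi_\varepsilon}$, so the family $u_\varepsilon$ is bounded in $L^2(e^{-\psi_{\varepsilon_0}})$. Extract a weak limit $u$; then $\dbar u=f_I$ in the sense of distributions.
\item Weak lower semicontinuity of the norm gives $\int|u|^2e^{-\psi_{\varepsilon_0}-|z|^2}$ bounded independently of $\varepsilon_0$. Letting $\varepsilon_0\to0$, monotone convergence yields $\int|u|^2e^{-\psi-|z|^2}<\infty$.
\end{itemize}
This completes the local solvability, and with it the exactness of the complex and the $L^2$-Dolbeault isomorphism.
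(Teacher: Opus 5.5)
Your proposal is correct, and it is essentially the standard argument behind this result: identify the degree-$0$ kernel with $\Omega^p_X\otimes\mathcal{O}_X(L)\otimes\mathscr{I}(h)$, reduce $(p,q)$-forms to $(n,q)$-forms through a flat local trivialization, solve $\dbar$ locally with H\"ormander's estimate for the plurisubharmonic weight $\varphi+C|z|^2$, and conclude by de Rham--Weil (the paper gives no proof of its own and cites \cite{Wat25} and \cite{Dem12} for this route). Two small points: you should shrink the ball so that $f$ is actually in $L^2$ on it, and the regularization step is optional, since H\"ormander's estimate already holds for arbitrary plurisubharmonic weights.
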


The analytic logarithmic \(L^2\)-Dolbeault resolusion for the logarithmic sheaf \(\Omega^p(\log D)\) was first established in \cite{HLWY23} for a smooth Hermitian metric on a line bundle. 
It was subsequently generalized in \cite{Wat26a} to singular Hermitian metrics in the form twisted by multiplier ideal sheaves as follows.
Let $D$ be a simple normal crossing divisor and $\omega_P$ be a smooth \kah metric on $X\setminus D$. 
Let $h^L_D$ be a singular Hermitian metric on $L|_{X\setminus D}$. The sheaf $\scr{L}^{p,q}_{L,h^L_D,\,\omega_P}$ over $X$ is defined by the following: 
for any open subset $U$ of $X$, the section space $\scr{L}^{p,q}_{L,h^L_D,\,\omega_P}(U)$ consists of $L$-valued $(p,q)$-forms $u$ with measurable coefficients such that $|u|^2_{h^L_D,\,\omega_P}dV_{\omega_P}$ and $\big|\dbar u\big|^2_{h^L_D,\,\omega_P}dV_{\omega_P}$ are integrable on $U\setminus D$.

\begin{theorem}[{\cite[Theorem 3.7]{Wat26a}}]\label{Theorem: Logarithmic L2-Dolbeault resolusion}
    Let $X$ be a compact \kah manifold, $D=\sum^J_{j=1} D_j$ be a simple normal crossing divisor on $X$, and $\omega_P$ be a smooth \kah metric on $X\setminus D$ which is of Poincar\'{e}-type along $D$. 
    Let $L$ be a holomorphic line bundle on $X$ with a singular Hermitian metric $h$ whose local weights are quasi-plurisubharmonic, and $\sigma_j$ be the defining section of $D_j$. 
    Fix smooth Hermitian metrics $h_j$ on $\cal{O}_X(D_j)$.  
    
    Then, there exists a sufficiently large integer $\alpha>0$ depending on the compactness of $X$ such that the singular Hermitian metric $\hbar^L_D$ on $L|_{X\setminus D}$ defined by 
    \begin{align*}
        \hbar^L_D=h\prod^J_{j=1}|\sigma_j|^2_{h_j}(\log |\sigma_j|^2_{h_j})^{2\alpha}
    \end{align*}
    has the property that, for any $p\geq0$, the logarithmic $L^2$-Dolbeault complex 
    \begin{align*}
        0\longrightarrow \Omega_X^p(\log D)\otimes\cal{O}_X(L)\otimes\scr{I}(h)\hookrightarrow\scr{L}^{p,0}_{L,\hbar^L_D,\,\omega_P}\overset{\dbar}{\longrightarrow}\scr{L}^{p,1}_{L,\hbar^L_D,\,\omega_P}\overset{\dbar}{\longrightarrow}\scr{L}^{p,2}_{L,\hbar^L_D,\,\omega_P}\overset{\dbar}{\longrightarrow}\cdots
    \end{align*}
    is exact on $X$; that is, the complex $\Big(\scr{L}^{p,\ast}_{L,\hbar^L_D,\,\omega_P},\dbar\Big)$ is logarithmic $L^2$-Dolbeault resolution of $\Omega_X^p(\log D)\otimes\cal{O}_X(L)\otimes\scr{I}(h)$. 
    In particular, the complex is exact for $q=0$, that is, 
    \begin{align*}
        \Omega_X^p(\log D)\otimes\cal{O}_X(L)\otimes\scr{I}(h)=\rom{Ker}\Bigl(\,\dbar:\scr{L}^{p,0}_{L,\hbar^L_D,\,\omega_P}\longrightarrow \scr{L}^{p,1}_{L,\hbar^L_D,\,\omega_P}\Bigr).
    \end{align*}

    Thus, we have the logarithmic $L^2$-Dolbeault isomorphism 
    \begin{align*}
        H^q(X,\Omega_X^p(\log D)\otimes L\otimes\scr{I}(h))\cong H^q\Bigl(\Gamma\Bigl(X\setminus D,\mathscr{L}^{p,\ast}_{L,\hbar^L_D,\,\omega_P}\Bigr)\Bigr)
    \end{align*}
    for any nonnegative integers $p,q\geq0$.
\end{theorem}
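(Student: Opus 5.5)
The statement is local apart from the final isomorphism, so I would first prove exactness of the complex of sheaves stalkwise and then deduce the cohomological statement formally. The sheaves $\scr{L}^{p,q}_{L,\hbar^L_D,\,\omega_P}$ are fine, since they are modules over smooth functions on $X$ and $\omega_P$-norms of $\dbar\chi$ are bounded for smooth cut-offs $\chi$. Hence, once exactness is known, the sheaf cohomology of $\Omega^p_X(\log D)\otimes L\otimes\scr{I}(h)$ is computed by global sections. A global section on $X$ is the same as a form on $X\setminus D$ with the stated integrability, which gives the last isomorphism.

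\emph{Local model.} Near a point $x\in D$, I would choose coordinates on a polydisc $U$ with $D\cap U=\{z_1\cdots z_k=0\}$, so that $\omega_P$ is quasi-isometric to
\[
\omega_{\rm mod}=\sum_{j\le k}\frac{i\,dz_j\wedge d\bar z_j}{|z_j|^2\log^2|z_j|^2}+\sum_{j>k}i\,dz_j\wedge d\bar z_j .
\]
I would also write $h=e^{-\varphi}$ with $\varphi$ quasi-psh. The frame $\{dz_j/z_j\}_{j\le k}\cup\{dz_j\}_{j>k}$ of $\Omega^1_X(\log D)$ has $\omega_P$-norm comparable to $|\log|z_j|^2|$, respectively $1$. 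A $(p,q)$-form can therefore be written as $\sum_I e_I\wedge u_I$, with $e_I$ the wedge products of this frame and $u_I$ of type $(0,q)$. Since $\dbar e_I=0$, the $\dbar$-equation decouples into $(0,q)$-equations for the coefficients $u_I$. Each coefficient carries an explicit weight of the shape
\[
e^{-\varphi}\prod_j|z_j|^{2}(\log|z_j|^2)^{2\alpha}\cdot(\text{powers of }\log|z_j|^2)\cdot(\text{volume factor of }\omega_P).
\]

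\emph{Degree $0$.} Let $u=\sum_I f_I\,e_I$ be $\dbar$-closed on $U\setminus D$ with $f_I$ holomorphic there. The weight contains $|z_j|^2$ against the Poincaré volume $1/(|z_j|^2\log^2|z_j|^2)$, so $L^2$-integrability forces each $f_I$ to be $L^2$ with respect to $e^{-\varphi}$ times logarithmic factors. By Riemann extension in each $z_j$, the $f_I$ extend holomorphically across $D$: a Laurent term $z_j^{-m}$ with $m\ge1$ is not integrable against $|z_j|^{-2}$ times powers of $\log$. A Skoda/Ohsawa-type argument that absorbs the $(\log)^{2\alpha}$ factors, which are harmless because $\varphi$ is quasi-psh, then shows that $f_I\in\scr{I}(h)$. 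Conversely, for $f\in\scr{I}(h)$ the integral of $|f|^2e^{-\varphi}\,|\log|z_j|^2|^{2\alpha}$ is finite, since $e^{-\varphi}$ lies in $L^{1+\epsilon}$ by strong openness and powers of $\log$ lie in every $L^r$. This gives the identification in degree $0$.

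\emph{Degree $q\ge1$.} Given a $\dbar$-closed $v$ in the complex over $U$, I would solve $\dbar u=v$ on $U'\setminus D$ for a smaller polydisc $U'$, using Hörmander's $L^2$ estimate on the complete \kah manifold $(U'\setminus D,\omega_P)$. The curvature of the weight is
\[
i\partial\dbar\varphi-\sum_j i\Theta_{h_j}+2\alpha\sum_j\bigl(-i\partial\dbar\log(-\log|\sigma_j|^2_{h_j})\bigr)+i\partial\dbar|z|^2 ,
\]
and $-i\partial\dbar\log(-\log|\sigma_j|^2)$ dominates a multiple of the Poincaré part of $\omega_P$ up to a smooth error. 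Adding the strictly psh term $|z|^2$, and taking $\alpha$ large, which is uniform by compactness of $X$ since finitely many charts suffice, gives a positive lower bound $\ge c\,\omega_P$ for the curvature eigenvalues. This holds after also absorbing the curvature contributions of the logarithmic frame, i.e.\ the smooth factors $|\log|z_j|^2|^{m}$ in the norms of $e_I$, which are again controlled by the $\alpha$-term.

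I expect this degree $q\ge1$ step to be the main obstacle. Because $\varphi$ is only quasi-psh, I would first regularize $\varphi$ locally by a decreasing sequence of smooth psh-plus-$C|z|^2$ functions, solve with uniform estimates, and pass to a weak limit via monotone convergence. The delicate point is bookkeeping the logarithmic weights for $p<n$, where Lemma \ref{Lemma: inequality of (n,0)-forms} goes the wrong way for $(p,0)$-parts. This is exactly why I would work with the decoupled $(0,q)$ coefficients in the logarithmic frame rather than with $(p,q)$-forms directly.
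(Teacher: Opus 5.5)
The paper does not prove this statement; it is quoted from \cite{Wat26a}. The mechanism appears only in the paper's proof of Lemma~\ref{Lemma: log resolusion for (n,q)-forms}. There, on a small Stein neighbourhood, one twists by $e^{-(1+c+\tau_\nu)|z|^2}$ and uses that $c\,\idd|z|^2$ plus the terms $2\alpha\, i\partial\log|\sigma_j|^2\wedge\dbar\log|\sigma_j|^2/(\log\cdots)^2$ dominate $\omega_P$. This gives $\iO{L}\geq\omega_P$, hence $[\iO{L},\Lambda_{\omega_P}]\geq q$ on $(n,q)$-forms, and H\"ormander applies; degree $0$ is simply imported.

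Your degree $q\geq1$ step uses the same local mechanism. The real difference is your treatment of $p<n$, where $\iO{L}\geq c\,\omega_P$ alone does \emph{not} make $[\iO{L},\Lambda]$ positive on $(p,q)$-forms. Your decoupling in the logarithmic frame turns each coefficient equation into an $(n,q)$-problem with weight $\hbar^L_D\,|e_I|^2_{\omega_P}\det\omega_P$. In the model, its curvature is
\[
\idd\varphi+\sum_{j\le k}(2\alpha-2+2\epsilon_j)\,\frac{i\,dz_j\wedge d\bar z_j}{|z_j|^2\log^2|z_j|^2}\qquad(\epsilon_j=1\text{ iff }j\in I),
\]
and the negative part of $\idd\varphi$ is absorbed by $C|z|^2$. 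So your route covers all $p$ and shows exactly where the size of $\alpha$ enters (here $\alpha\geq2$). The paper's $(n,q)$-route needs only $\iO{L}\geq\omega$ for whatever \kah metric is used. That is why it also works for $\omega_{P,\nu,\delta}$, which is not quasi-isometric to a diagonal model near $Z_\nu$, so your frame decoupling would not apply there.

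One correction to your bookkeeping, which is inverted. The frame factors $|\log|z_j|^2|^2$ contribute \emph{positively}. What the $\alpha$-term must beat is the Ricci contribution $-2\,i\,dz_j\wedge d\bar z_j/(|z_j|^2\log^2|z_j|^2)$ coming from $\det\omega_P$. Your curvature formula omits this term, and you should write it out.

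A few further slips need fixing, none of them a gap.
\begin{itemize}
\item Strong openness does not give $e^{-\varphi}\in L^{1+\epsilon}$. It gives $|f|^2e^{-(1+\epsilon)\varphi}\in L^1_{\mathrm{loc}}$, hence $|f|^2e^{-\varphi}\in L^{1+\epsilon}_{\mathrm{loc}}$ because $f$ is locally bounded. H\"older against the powers of $\log$ then yields $\Omega^p_X(\log D)\otimes L\otimes\scr{I}(h)\subset\scr{L}^{p,0}_{L,\hbar^L_D,\,\omega_P}$.
\item For the reverse inclusion no Skoda/Ohsawa argument is needed. After the cancellation, the density is $e^{-\varphi}\prod_{j\le k}|\log|z_j|^2|^{2\alpha-2+2\epsilon_j}\geq c\,e^{-\varphi}$ on a polydisc of radius $<1$, so $f_I$ is $L^2$. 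It extends across $D$ because $|z_j|^{-2m}$, $m\geq1$, is not integrable against powers of $\log$ times Lebesgue measure; the comparison with $|z_j|^{-2}$ is the wrong one. Then $\int|f_I|^2e^{-\varphi}<\infty$ is precisely $f_I\in\scr{I}(h)$.
\item $(U'\setminus D,\omega_P)$ is not complete near $\partial U'$. What H\"ormander's estimate actually needs is that $U'\setminus D$ is Stein.
\item $\iO{\Ox{D_j},h_j}$ enters with a plus sign, which is harmless.
\end{itemize}
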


Furthermore, these results extend appropriately to the setting of vector bundles when the singular Hermitian metric has a suitable positivity property such as Griffiths or Nakano positivity (see \cite{Wat25,Wat26a}).
Finally, using the logarithmic \(L^2\)-Dolbeault resolution, we establish the well-definedness of the Hard Lefschetz-type map induced by the wedge multiplication operator associated with a \Pt \kah metric.

\begin{proposition}\label{Proposition: well-definedness of Phi}
    Let $X,D,L,h$ be as in the setting of Theorem \ref{Theorem: Logarithmic L2-Dolbeault resolusion}. For any smooth \kah metric $\omega_P$ on $X\setminus D$ which is of \Pt along $D$, the linear map 
    \begin{align*}
        \Phi^q_{\omega_P,h}:H^0(X,\Omega_X^{n-q}(\log D)\otimes L\otimes\scr{I}(h))\longrightarrow H^q(X,\Omega_X^n(\log D)\otimes L\otimes \scr{I}(h))
    \end{align*}
    given by $u\mapsto\{\omega^q_P\wedge u\}$ is well-defined. 
\end{proposition}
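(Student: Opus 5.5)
The plan is to realise the map through the logarithmic $L^2$-Dolbeault isomorphism of Theorem \ref{Theorem: Logarithmic L2-Dolbeault resolusion} with $p=n$, using the metric $\hbar^L_D=h\prod_j|\sigma_j|^2_{h_j}(\log|\sigma_j|^2_{h_j})^{2\alpha}$ provided there. Under this isomorphism a class in $H^q(X,\Omega^n_X(\log D)\otimes L\otimes\scr{I}(h))$ is represented by a $\dbar$-closed form in $\Gamma(X\setminus D,\scr{L}^{n,q}_{L,\hbar^L_D,\omega_P})$. We therefore need that, for every global section $u$ of $\Omega^{n-q}_X(\log D)\otimes L\otimes\scr{I}(h)$, the form $\omega_P^q\wedge u$ on $X\setminus D$ lies in $\scr{L}^{n,q}_{L,\hbar^L_D,\omega_P}$ and is $\dbar$-closed. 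Then $\{\omega_P^q\wedge u\}$ is a class, and linearity is clear.

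Closedness is immediate. On $X\setminus D$ the section $u$ is holomorphic and $\omega_P$ is a smooth $d$-closed form, so $\dbar(\omega_P^q\wedge u)=\omega_P^q\wedge\dbar u=0$. The square-integrability of $\dbar$ of the form is therefore trivial.

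The main step is the integrability of $|\omega_P^q\wedge u|^2_{\hbar^L_D,\omega_P}dV_{\omega_P}$ on $X\setminus D$. We use the identity $\lara{\omega^q\wedge v}{\omega^q\wedge v}_\omega=(q!)^2\lara{v}{v}_\omega$ for $(n-q,0)$-forms, quoted before Proposition \ref{Proposition: Prop in DPS01}. It is a pointwise linear-algebra fact valid for any Hermitian metric, so it applies to $\omega_P$ and reduces the question to
\begin{align*}
\int_{X\setminus D}|u|^2_{\hbar^L_D,\omega_P}dV_{\omega_P}<\infty .
\end{align*}
This says exactly that $u\in\scr{L}^{n-q,0}_{L,\hbar^L_D,\omega_P}(X)$. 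By the $q=0$ exactness statement of Theorem \ref{Theorem: Logarithmic L2-Dolbeault resolusion} (applied with $p=n-q$), the sheaf $\Omega^{n-q}_X(\log D)\otimes\cal{O}_X(L)\otimes\scr{I}(h)$ coincides with the kernel of $\dbar$ on $\scr{L}^{n-q,0}_{L,\hbar^L_D,\omega_P}$. In particular every such germ is locally $L^2$ near $D$. Compactness of $X$ and a finite cover then give global integrability.

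Independence of choices is not an issue, since $u$ is a fixed section and $\omega_P$ is fixed. If one wants $\Phi^q_{\omega_P,h}$ independent of the auxiliary $h_j$ and $\alpha$, this follows because the isomorphism of Theorem \ref{Theorem: Logarithmic L2-Dolbeault resolusion} is induced by the sheaf resolution. Any two such resolutions, with their inclusion of $\omega_P^q\wedge u$, are compatible through the abstract sheaf cohomology.

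I expect the only delicate point to be justifying the reduction of the weighted norm of $\omega_P^q\wedge u$ to that of $u$ with respect to the singular metric $\omega_P$. Because the identity is pointwise on $X\setminus D$ where everything is smooth, I expect this to go through directly. The fallback would be a direct local computation in Poincaré coordinates, using $dz_j/z_j$ of bounded $\omega_P$-norm times $\log|z_j|$ and the factor $|\sigma_j|^2(\log|\sigma_j|^2)^{2\alpha}$ in $\hbar^L_D$ for convergence.
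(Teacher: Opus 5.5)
Your proposal is correct and follows essentially the same route as the paper: the pointwise identity $|\omega_P^q\wedge v|^2_{h,\omega_P}=(q!)^2|v|^2_{h,\omega_P}$ for $(n-q,0)$-forms, the $\dbar$-closedness of $\omega_P^q\wedge u$, and the $q=0$ case of Theorem \ref{Theorem: Logarithmic L2-Dolbeault resolusion} giving $u\in\Gamma(X\setminus D,\scr{L}^{n-q,0}_{L,\hbar^L_D,\omega_P})$, so that $\omega_P^q\wedge u$ defines a class via the logarithmic $L^2$-Dolbeault isomorphism. Your extra remark on independence of the auxiliary choices $h_j,\alpha$ goes beyond what the paper proves, and it is not needed for the statement.
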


\begin{proof}
    Let $\omega$ be a \kah metric on $X$ and $v$ be an $(n-q,0)$-form on $X$. 
    For a fixed point $x_0\in X$, there exists an orthonormal basis $(\partial/\partial z_1,\ldots,\partial/\partial z_n)$ of $T_{X,x_0}$ such that 
    \begin{align*}
        \omega_{x_0}=i\sum_{1\leq j\leq n}dz_j\wedge d\overline{z}_j, \qquad v(x_0)=\sum_{|I|=n-q}v_Idz_I.
    \end{align*}
    A straightforward calculation gives $\omega^q_{x_0}=q!\sqrt{-1}^{q^2}\sum_{|J|=q}dz_J\wedge d\overline{z}_J$, and hence 
    \begin{align*}
        \omega^q\wedge v(x_0)&=q!\sqrt{-1}^{q^2}\!\!\!\!\!\!\!\!\sum_{|J|=q,|I|=n-q}\!\!\!\!\!v_Idz_J\wedge d\overline{z}_J\wedge dz_I=q!\sqrt{-1}^{q^2}\!\!\sum_{|I|=n-q}v_Idz_{I^c}\wedge d\overline{z}_{I^c}\wedge dz_I\\
        &=q!\sqrt{-1}^{q^2}\!\!\sum_{|I|=n-q}\pm\,v_Idz_1\wedge\cdots\wedge dz_n\wedge d\overline{z}_{I^c}.
    \end{align*}
    Therefore, we obtain $|\omega^q\wedge v(x_0)|^2_\omega=(q!)^2\sum_{|I|=n-q}|v_I|^2=(q!)^2|v(x_0)|^2_\omega$,
    Since $x_0$ was arbitrary, this equality holds on $X$. Consequently, in particular, we have 
    \begin{align*}
        |\omega^q_P\wedge v|^2_{h,\omega_P}=(q!)^2|v|^2_{h,\omega_P}
    \end{align*}
    on $X\setminus D$ for any $L$-valued $(n-q,0)$-form $v$. 

    Given a singular Hermitian metric \(h\), let \(h^L_D\) be the singular Hermitian metric on \(L|_{X\setminus D}\) constructed as in Theorem \ref{Theorem: Logarithmic L2-Dolbeault resolusion} using smooth Hermitian metrics and defining sections associated with each \(D_j\). 
    Then, using the above equality, for any global section \(u\in H^0(X,\Omega^q_X(\log D)\otimes L\otimes\scr{I}(h))=\Gamma\Bigl(X\setminus D,\scr{L}^{n-q,0}_{L,h^L_D,\omega_P}\Bigr)\cap\rom{Ker}\,\dbar\), we have
    \begin{align*}
        \int_{X\setminus D}|\omega^q_P\wedge u|^2_{h^L_D,\omega_P}dV_{\omega_P}=(q!)^2\int_{X\setminus D}|u|^2_{h^L_D,\omega_P}dV_{\omega_P}<+\infty.
    \end{align*}
    Combining this integrability with $\dbar(\omega^q_P\wedge u)=\omega^q_P\wedge\dbar u=0$ on $X\setminus D$, it follows that $\omega^q_P\wedge u\in \Gamma\Bigl(X\setminus D,\scr{L}^{n,q}_{L,h^L_D,\omega_P}\Bigr)$. 
    In particular, we obtain 
    \begin{align*}
        \{\omega^q_P\wedge u\}\in H^q\Bigl(\Gamma\Bigl(X\setminus D,\scr{L}^{n,\ast}_{L,h^L_D,\omega_P}\Bigr)\Bigr)\cong H^q(X,\Omega^n_X(\log D)\otimes L\otimes\scr{I}(h)).
    \end{align*}
    Hence, the map $\Phi^q_{\omega_P,h}$ is well-defined.
\end{proof}

\subsection{Demailly's approximations}

We know the following Demailly's approximation as an extremely effective method.

\begin{theorem}[{\cite[Theorem\,2.2.1]{DPS01}}]\label{Theorem: Demailly approximation preserves ideal sheaves}
    Let $X$ be a compact complex manifold with a Hermitian metric $\omega$ and $T=\alpha+\idd\varphi$ be a closed $(1,1)$-current on $X$ 
    where $\alpha$ is a smooth closed $(1,1)$-form and $\varphi$ is a quasi-plurisubharmonic function. 
    If $T=\alpha+\idd\varphi\geq \gamma$ holds for a continuous real $(1,1)$-from $\gamma$ on $X$, then there exists a sequence of quasi-plurisubharmonic functions $\{\varphi_\nu\}_{\nu\in\bb{N}}$ on $X$ with the following properties:
    \begin{itemize}
        \item [($a$)] $\varphi_\nu$ is smooth on $X\setminus Z_\nu$, where $Z_\nu$ is an analytic subset of $X$ with $Z_\nu\subset Z_{\nu+1}$,
        \item [($b$)] $\{\varphi_\nu\}_{\nu\in\bb{N}}$ is a decreasing sequence of functions converging to $\varphi$,
        \item [($c$)] $\scr{I}(\varphi)=\scr{I}(\varphi_\nu)$ on $X$ for all $\nu\in\bb{N}$,
        \item [($d$)] $T_\nu:=\alpha+\idd\varphi_\nu$ satisfies $T_\nu\geq\gamma-\varepsilon_\nu\omega$, where $\{\varepsilon_\nu\}_{\nu\in\bb{N}}$ is a decreasing sequence of nonnegative real numbers converging to $0$. 
    \end{itemize}
\end{theorem}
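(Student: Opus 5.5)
The plan follows Demailly's Bergman kernel regularization. The extra work is to make the approximants \emph{equisingular}, meaning they keep the multiplier ideal sheaf fixed. First, cover $X$ by finitely many coordinate balls $U_k$ on which $\alpha=\idd\rho_k$ for smooth local potentials $\rho_k$. Then $\phi_k:=\rho_k+\varphi$ is a function with $\idd\phi_k\geq\gamma$, which is almost plurisubharmonic after adding $C|z|^2$. For $m\geq1$ consider the Hilbert space $\cal{H}_{k,m}$ of holomorphic functions $f$ on $U_k$ with $\int_{U_k}|f|^2e^{-2m\phi_k}d\lambda<\infty$. Pick an orthonormal basis $(\sigma_{k,m,l})_l$ and set $\phi_{k,m}:=\frac{1}{2m}\log\sum_l|\sigma_{k,m,l}|^2$. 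This function has analytic singularities and is smooth off the common zero set of the $\sigma_{k,m,l}$. Two standard estimates compare it with $\phi_k$. The Ohsawa--Takegoshi extension theorem gives the lower bound $\phi_{k,m}\geq\phi_k-C/m$. The mean value inequality gives the upper bound $\phi_{k,m}(z)\leq\sup_{B(z,r)}\phi_k+\frac1m\log(C/r^n)$. Gluing the local functions $\phi_{k,m}-\rho_k$ with Demailly's regularized maximum, using slightly shrunk covers and correction terms $\varepsilon|z|^2$, produces global quasi-psh functions $\tilde\varphi_m$. These satisfy $\tilde\varphi_m\to\varphi$ pointwise and are smooth off analytic sets. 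Their curvature satisfies $\alpha+\idd\tilde\varphi_m\geq\gamma-\varepsilon_m\omega$, because the only loss comes from the modulus of continuity of $\gamma$ and the gluing errors, both of which are $O(1/m)$ plus $o(1)$. Replacing $m$ by $2^\nu$ and adding small constants $c_\nu\downarrow0$ yields a decreasing sequence, by the subadditivity of Bergman kernels. This gives ($a$), ($b$) and ($d$), with $Z_\nu$ increasing.

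Property ($c$) is the main obstacle. One inclusion is easy: $\tilde\varphi_m\geq\varphi-C/m$ gives $\scr{I}(\varphi)\subset\scr{I}(\tilde\varphi_m)$. The reverse inclusion fails for the raw sequence, since $\tilde\varphi_m$ only retains the singularities of $\varphi$ up to the loss $n/m$ in Lelong numbers. To handle this, I would first use coherence of $\scr{I}(\varphi)$ together with the strong openness property. These give an $\varepsilon_0>0$ with $\scr{I}(\varphi)=\scr{I}((1+\varepsilon_0)\varphi)$ on all of $X$, where compactness makes $\varepsilon_0$ uniform. I would then modify the construction so that it keeps a fixed fraction of the original singularity. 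Concretely, define $\varphi_\nu$ as a regularized maximum of $\tilde\varphi_{m_\nu}$ and $\varphi_{\mathrm{an}}+C_\nu$. Here $\varphi_{\mathrm{an}}$ is a function with analytic singularities whose singularity is comparable to $(1+\varepsilon_0)\varphi$. For example, take $\varphi_{\mathrm{an}}=\frac{1}{2}\log\sum_j|g_j|^2$ built from global generators $g_j$ of the coherent sheaf $\scr{I}((1+\varepsilon_0)\varphi)$ after twisting by an ample bundle, weighted by a small coefficient so as not to destroy ($d$).

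For the comparison I would apply Hölder's inequality: if $\int|f|^2e^{-2\varphi_\nu}<\infty$, then $f$ lies in the integral closure that controls $\scr{I}((1+\varepsilon_0)\varphi)=\scr{I}(\varphi)$. Choosing $m_\nu$ large relative to $\varepsilon_0$ keeps the curvature loss within $\varepsilon_\nu\omega$, and choosing the constants $C_\nu\to+\infty$ preserves monotonicity and convergence to $\varphi$. I expect the delicate balance to be between keeping $\varphi_\nu$ decreasing and making the coefficient of the analytic term small enough for ($d$) while still forcing $\scr{I}(\varphi_\nu)\subset\scr{I}(\varphi)$. If this regularized-maximum device cannot meet all three requirements at once, I would fall back on the original \cite{DPS01} argument. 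That argument combines the Bergman approximants at different levels $m$ into a convergent series $\varphi_\nu=\frac12\log\sum_{m\geq\nu}\varepsilon_m^{\,}e^{2\tilde\varphi_m}$ and checks ($c$) directly by Fubini and Ohsawa--Takegoshi.
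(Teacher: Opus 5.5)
The paper gives no proof of this statement; it quotes it from \cite{DPS01}, so I compare your plan with what any proof must accomplish. Your treatment of ($a$), ($b$), ($d$) is the standard one and is fine in outline: local Bergman kernels for $e^{-2m\phi_k}$, the Ohsawa--Takegoshi lower bound, gluing by regularized maxima, and $m=2^\nu$ with constant shifts.

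The gap is in ($c$), and there your mechanism points the wrong way. A regularized maximum satisfies $\max_{\mathrm{reg}}(u,w)\geq\max(u,w)\geq u$, so $\varphi_\nu\geq\tilde\varphi_{m_\nu}$. Hence $e^{-2\varphi_\nu}\leq e^{-2\tilde\varphi_{m_\nu}}$ and $\scr{I}(\tilde\varphi_{m_\nu})\subseteq\scr{I}(\varphi_\nu)$: taking a maximum can only enlarge the multiplier ideal. You note yourself that $\scr{I}(\tilde\varphi_m)$ may strictly contain $\scr{I}(\varphi)$, so no choice of $\varphi_{\mathrm{an}}$ or $C_\nu$ repairs this.

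Here is a concrete failure. On $X=\bb{P}^2$ take $\alpha=\idd\log|z|^2$ and $\varphi=\log\bigl((|z_1|^2+|z_2|^2)/|z|^2\bigr)$, so $T\geq0$. Near $p=[1:0:0]$ one has $\varphi=2\log|w|+O(1)$ and $\scr{I}(\varphi)_p=\mathfrak{m}_p$. On the other hand, $\cal{H}_{k,m}$ consists of the functions vanishing to order $\geq2m-1$ at $p$. Hence $\tilde\varphi_m=\frac{2m-1}{m}\log|w|+O(1)$, and $e^{-2\tilde\varphi_m}\sim|w|^{-4+2/m}$ is integrable. So every function $\geq\tilde\varphi_m-C$ has trivial multiplier ideal at $p$, even though here the constant sequence $\varphi_\nu=\varphi$ works. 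Your fallback fails for the same reason: $\frac12\log\sum_{m\geq\nu}\varepsilon_m e^{2\tilde\varphi_m}\geq\tilde\varphi_\nu+\frac12\log\varepsilon_\nu$ is again a smooth maximum.

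There are further problems.
With $C_\nu\to+\infty$, one has $\varphi_\nu\geq\varphi_{\mathrm{an}}+C_\nu\to+\infty$ off the poles of $\varphi_{\mathrm{an}}$, which contradicts ($b$). A small coefficient only weakens the singularity you are trying to impose. The H\"older sentence is not an argument as written.

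What H\"older actually gives is the following. Finite generation of $\cal{H}_{k,m}$ on compact sets yields $\int_K e^{2m(\tilde\varphi_m-\varphi)}dV<\infty$. H\"older with exponents $\frac{m}{m-1}$ and $m$ then gives $\scr{I}(\frac{m}{m-1}\tilde\varphi_m)\subseteq\scr{I}(\varphi)$. Combined with strong openness, $(1+\delta)\tilde\varphi_m$ is equisingular for $\frac1{m-1}\leq\delta\leq\varepsilon_0$. But such functions are in general not $\geq\varphi$: locally, for $\varphi=\log|w_1|$ one has $\tilde\varphi_m=\log|w_1|+O(1)$. So they cannot form a sequence decreasing to $\varphi$.

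The real difficulty, which your plan does not address, is the inclusion $\scr{I}(\varphi_\nu)\subseteq\scr{I}(\varphi)$ for functions $\varphi_\nu\geq\varphi$. This needs quantitative control of $\varphi_\nu-\varphi$ exactly where $e^{-2\varphi}$ fails to be integrable. \cite{DPS01} obtain this by proving the stronger property that $\int_X(e^{-2\varphi}-e^{-2\varphi_\nu})dV_\omega$ is finite and tends to $0$. Together with $\varphi_\nu\geq\varphi$, this gives $\scr{I}(\varphi_\nu)=\scr{I}(\varphi)$ at once. Note also that the statement only asks for smoothness on $X\setminus Z_\nu$. You should not expect to reach ($c$) with functions that are, up to $O(1)$, maxima of Bergman-type functions.
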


Unlike the so-called Demailly approximation of quasi-plurisubharmonic functions described above, which uses the Ohsawa-Takegoshi extension theorem and Bergman kernels, we introduce the convolution-type regularization via the exponential map $\exp:T_X\longrightarrow X$ using a kernel symmetric with respect to a \kah metric, which was given earlier by Demailly.
Since this regularization approximates \(h\) by smooth functions, its drawback is that information on the singularities of \(h\), in particular on the associated multiplier ideal sheaf $\scr{I}(h)$, is lost. However, this approximation is effective in the present setting because the Hermitian metric on each $\Ox{D_j}$ in the logarithmic \(L^2\)-Dolbeault resolution is required to be smooth.

\begin{theorem}[{\cite[Th\'{e}or\`{e}m 9.1]{Dem82}}]\label{Theorem: Demailly approximation of smooth functions}
    Let $X$ be a complex manifold equipped with a \kah metric $\omega$ and $\varphi$ be a quasi-plurisubharmonic function on $X$. 
    Assume that there exists a continuous real $(1,1)$-form $\theta$ such that $\idd\varphi\geq\theta$ in the sense of currents. 
    Then, there exist a decreasing family $\{\varphi_\nu\}_{\nu\in\bb{N}}$ of smooth functions on $X$, a family $\{\gamma_\nu\}_{\nu\in\bb{N}}$ of continuous real $(1,1)$-forms, and a decreasing family $\{\lambda_\nu\}_{\nu\in\bb{N}}$ of nonnegative continuous functions on $X$ satisfying the following properties: 
    \begin{itemize}
        \item [$(a)$] $\displaystyle\lim_{\nu\to+\infty}\varphi_\nu(x)=\varphi(x)$ for every $x\in X$, 
        \item [$(b)$] $\idd\varphi_\nu\geq\gamma_\nu-\lambda_\nu\omega$ and $\gamma_\nu\geq\theta$, 
        \item [$(c)$] $\gamma_\nu\longrightarrow(\idd\varphi)_{ac}$ almost everywhere on $X$ as $\nu\to+\infty$,
        \item [$(d)$] $\lambda_\nu\longrightarrow0$ almost everywhere on $X$, more precisely, at every point $x\in X$ where the Lelong number $\nu(\varphi,x)=0$ vanishes,
        \item [$(e)$] If $\nu(\varphi,x)=0$ for every $x\in X$ (in particular, if $\varphi$ is locally bounded), then $\lambda_\nu$ converges uniformly to $0$ on every compact subset of $X$. 
    \end{itemize}
\end{theorem}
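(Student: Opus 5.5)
We follow Demailly's original strategy and regularize $\varphi$ by averaging it over small geodesic balls, computing the complex Hessian of the average directly. Fix a smooth radial cut-off $\chi:\bb{R}\to[0,+\infty)$ supported in $(-\infty,1]$ with $\int_{\bb{C}^n}\chi(|\zeta|^2)d\lambda(\zeta)=1$. For $x\in X$ and $\varepsilon>0$ small, set
\begin{align*}
    \Phi(x,\varepsilon)=\int_{\zeta\in T_{X,x}}\varphi\bigl(\exp_x(\varepsilon\zeta)\bigr)\chi(|\zeta|^2_\omega)\,d\lambda(\zeta),
\end{align*}
where $d\lambda$ is Lebesgue measure on $(T_{X,x},\omega_x)$. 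We regard $\Phi$ as a function of $(x,w)$ with $|w|=\varepsilon$, on a neighbourhood of $X\times\{0\}$ in $X\times\bb{C}$. By compactness (or an exhaustion in the non-compact case, with all constants taken locally uniform) we may work with $\varepsilon\leq\varepsilon_0$.

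\textbf{Step 1: Hessian of $\Phi$ in Kähler normal coordinates.} The Kähler condition gives holomorphic coordinates at $x$ in which $\omega=i\sum dz_j\wedge d\overline{z}_j+O(|z|^2)$. In these coordinates the exponential map satisfies $\exp_x(\zeta)=x+\zeta+O(|\zeta|^2)$, and its non-holomorphic part is of order $|\zeta|^2$. Because $\chi(|\zeta|^2)$ is invariant under $\zeta\mapsto-\zeta$ and under unitary rotations, all first-order error terms cancel. Using $\idd\varphi\geq\theta$, we then aim for an estimate of the form
\begin{align*}
    \idd_{x,w}\Phi(x,w)\geq\theta_\varepsilon(x)-C\bigl(\varepsilon^2+\lambda(x,\varepsilon)\bigr)\omega_x ,\qquad
    \lambda(x,\varepsilon):=\varepsilon\tfrac{\partial}{\partial\varepsilon}\Phi(x,\varepsilon)+K\varepsilon^2 .
\end{align*}
Here $\theta_\varepsilon$ is a continuous form obtained by averaging the absolutely continuous part $(\idd\varphi)_{ac}$ (bounded below by $\theta$) with respect to the kernel. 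The term $\lambda(x,\varepsilon)$ arises from the curvature of $\omega$ acting on the second-order terms of $\exp$.

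This step is the main obstacle: it is a long Taylor computation, and one must show that every error is controlled by $\varepsilon^2$ times a radial derivative of $\Phi$. I would handle it by the standard trick of differentiating under the integral and integrating by parts in $\zeta$, which turns the mixed second derivatives into $\varepsilon\,\partial_\varepsilon\Phi$.

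\textbf{Step 2: monotonicity and Lelong numbers.} By the same computation, $\Phi(x,w)+K|w|^2$ is plurisubharmonic in $w$ and depends only on $|w|$. Hence $t\mapsto\Phi(x,e^{t})+Ke^{2t}$ is convex and increasing. This gives three consequences.
\begin{itemize}
    \item $\varepsilon\mapsto\Phi(x,\varepsilon)+K\varepsilon^2$ decreases to $\varphi(x)$ as $\varepsilon\downarrow0$, by upper semicontinuity of $\varphi$ together with the sub-mean value property.
    \item $\lambda(x,\varepsilon)\geq0$, and it decreases as $\varepsilon\downarrow0$.
    \item $\lim_{\varepsilon\to0}\varepsilon\,\partial_\varepsilon\Phi(x,\varepsilon)$ equals (a constant times) the Lelong number $\nu(\varphi,x)$, by Kiselman's characterization of Lelong numbers via geodesic averages.
\end{itemize}

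\textbf{Step 3: choosing the sequence.} Pick $\varepsilon_\nu\downarrow0$ and define
\begin{align*}
    \varphi_\nu(x)=\Phi(x,\varepsilon_\nu)+K\varepsilon_\nu^2,\qquad \gamma_\nu=\theta_{\varepsilon_\nu},\qquad \lambda_\nu(x)=C\bigl(\varepsilon_\nu^2+\lambda(x,\varepsilon_\nu)\bigr).
\end{align*}
Each $\varphi_\nu$ is smooth, since $\exp$ is smooth and the kernel is smooth in $x$. The properties then follow one by one.
\begin{itemize}
    \item $(a)$ and monotonicity of $\{\varphi_\nu\}$ follow from Step 2.
    \item $(b)$ follows from Step 1; after replacing $\theta_\varepsilon$ by $\max(\theta_\varepsilon,\theta)$ in the eigenvalue sense if necessary, we keep $\gamma_\nu\geq\theta$ at the cost of enlarging $\lambda_\nu$ by $\sup|\theta_\varepsilon-\theta|$, which tends to $0$ by continuity of $\theta$.
    \item $(c)$ is a Lebesgue differentiation statement for the averages of $(\idd\varphi)_{ac}$.
    \item $(d)$ follows because $\lambda_\nu(x)\to C\,\nu(\varphi,x)$, and this limit is $0$ wherever the Lelong number vanishes, in particular almost everywhere by Siu's theorem.
    \item $(e)$: if $\nu(\varphi,\cdot)\equiv0$, the functions $\lambda_\nu$ are continuous and decrease to $0$ pointwise, so Dini's theorem gives uniform convergence on compact subsets.
\end{itemize}
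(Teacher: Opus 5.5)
Your outline follows the route the paper attributes to Demailly. The paper does not reprove the theorem; it only cites \cite{Dem82} and describes exactly this convolution along $\exp$ with a symmetric kernel. Your bookkeeping in Steps 2--3 is also right: convexity in $\log\varepsilon$ gives the monotonicity and $(a)$, the radial derivative tends to the Lelong number for $(d)$, and Dini gives $(e)$.

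The gap is that the proof stops where the theorem starts. The Hessian estimate of Step 1 is only announced, yet $(b)$, the convexity you derive in Step 2 ``by the same computation'', and the identification of $\lim_\nu\lambda_\nu$ all rest on it. The mechanism you suggest does not suffice as stated:
\begin{itemize}
\item Symmetry of $\chi(|\zeta|^2)$ cancels first-order terms only after Taylor-expanding $\varphi$. That produces constants depending on $C^2$ bounds of $\varphi$, which are useless here.
\item For a singular quasi-psh $\varphi$, the $x$-Hessian of $\varphi(\exp_x(\varepsilon\zeta))$ also contains first derivatives of $\varphi$ and second derivatives of type $(2,0)$ and $(0,2)$. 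These come from the non-holomorphicity of $\exp$ and from the $x$-dependence of the frame and of the kernel.
\item None of these terms is controlled by $i\partial\overline{\partial}\varphi\geq\theta$. For $\varphi=\log|z_1|$ one has $\partial^2\varphi/\partial z_1^2=-1/(2z_1^2)$, which is not even locally integrable.
\end{itemize}
What is missing is an estimate whose constants depend only on $\theta$ and on the curvature of $\omega$. After differentiating under the integral and integrating by parts in $\zeta$, all such terms must cancel or become averages of the positive current $T=i\partial\overline{\partial}\varphi-\theta$ contracted with the curvature tensor. Positivity bounds these below by averages of $\mathrm{tr}_\omega T$, which must then be dominated by the radial derivative of the spherical means (possibly at a comparable radius). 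A standard way is to do this for smooth $\varphi$ and pass to the limit along local smooth approximations. That limit is legitimate because $\Phi$ is a smoothing of $\varphi$ and the inequality is linear in $\varphi$. This computation is the whole content of the theorem, and it is where the curvature of $T_X$ enters, as recorded in Lemma~\ref{Lemma: Lemme 8.6 in Dem82}.

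Some further steps need repair.
\begin{itemize}
\item In $(b)$, $\sup|\theta_\varepsilon-\theta|$ does not tend to $0$. Indeed $\theta_\varepsilon\to(i\partial\overline{\partial}\varphi)_{ac}$, which in general differs from $\theta$. What does tend to $0$ locally uniformly is the spectral positive part $(\theta-\theta_\varepsilon)_+$. If forms are transported by a complex-linear identification (e.g.\ parallel transport for the K\"ahler metric), $\theta_\varepsilon$ dominates the average of $\theta$, and $\theta$ is uniformly continuous on compacts. So take $\gamma_\nu:=\theta_{\varepsilon_\nu}+(\theta-\theta_{\varepsilon_\nu})_+\geq\theta$, and add to $\lambda_\nu$ a continuous majorant of $|(\theta-\theta_{\varepsilon_\nu})_+|_\omega$ that decreases in $\nu$.
\item Convexity of $t\mapsto\Phi(x,e^t)+Ke^{2t}$ gives nonnegativity and monotonicity of $\varepsilon\partial_\varepsilon\Phi+2K\varepsilon^2$, not of your $\varepsilon\partial_\varepsilon\Phi+K\varepsilon^2$.
\item For non-compact $X$, an exhaustion does not produce global smooth functions. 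Use a variable radius $\varepsilon_\nu\rho(x)$, with $\rho>0$ smooth and below the local injectivity radius. Then absorb the extra term $\partial_t\bigl(\Phi(x,e^t)+Ke^{2t}\bigr)\,i\partial\overline{\partial}\log\rho$ into $\lambda_\nu$; your joint $(x,w)$ estimate is exactly what allows this.
\end{itemize}
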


In \cite[Remark 4.4]{Wat26b}, it was noted that the loss of positivity of \(\varphi_\nu\) along \(E_{+}(\varphi):=\{x\in X\mid\nu(\varphi,x)>0\}\) may persist by a fixed amount; that is, the limit of \(\{\lambda_\nu\}_{\nu\in\bb{N}}\) may remain positive on \(E_{+}(\varphi)\). 
More precisely, the following holds.

\begin{lemma}[{cf. \cite[Lemme 8.6]{Dem82}}]\label{Lemma: Lemme 8.6 in Dem82}
    The limit of the decreasing sequence \(\{\lambda_\nu\}_{\nu\in\bb{N}}\) in Theorem \ref{Theorem: Demailly approximation of smooth functions} is determined by the Lelong number of \(\varphi\) and the Griffiths positivity of the tangent bundle \(T_X\) as follows. 
    For every \(x\in X\), we have
    \begin{align*}
        \lim_{\nu\to+\infty}\lambda_\nu(x)=\tau_{-}(x)\nu(\varphi,x),
    \end{align*}
    where $\tau_{-}(x):=\sup(0,-\tau(x))$, and \(\tau(x)\) denotes the smallest eigenvalue of the curvature of the tangent bundle \(T_X\) in the sense of Griffiths. 
    More precisely, for a smooth Hermitian metric \(h_X\) on \(T_X\), the function $\tau(x)$ is defined by 
    \begin{align*}
        \tau(x)=\frac{1}{2\pi}\inf_{\substack{|\xi|=|\eta|=1\\\xi,\eta\in\bb{C}^n}}c_{jk\ell m}\xi_j\overline{\xi}_k\eta_\ell\overline{\eta}_m=\inf_{\substack{|\xi|=|\eta|=1\\\xi,\eta\in T_{X,x}}}\frac{i}{2\pi}\Theta_{T_X,h_X}(\xi\otimes\eta,\xi\otimes\eta),
    \end{align*}
    where \(c_{jk\ell m}\) denotes the coefficients of the curvature tensor $\iO{T_X,h_X}$ with respect to an orthonormal basis of $T_{X,x}$.
    
    Furthermore, if $T_X$ is Griffiths semi-positive along $E_{+}(\varphi)$, then $\{\lambda_\nu\}_{\nu\in\bb{N}}$ decreases to $0$ on $X$. 
    In particular, if $T_X$ is Griffiths semi-positive on $X$, then for any approximation obtained in Theorem \ref{Theorem: Demailly approximation of smooth functions}, the decreasing sequence $\{\lambda_\nu\}_{\nu\in\bb{N}}$ converges to $0$ on $X$.
\end{lemma}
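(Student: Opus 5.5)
The plan is to go back to Demailly's construction in \cite{Dem82} behind Theorem \ref{Theorem: Demailly approximation of smooth functions} and track explicitly where the error term $\lambda_\nu$ comes from. Recall the regularization
\begin{align*}
    \Phi(z,w)=\Phi_\varepsilon(z)=\int_{\zeta\in T_{X,z}}\varphi\bigl(\mathrm{exp}_z(\varepsilon\zeta)\bigr)\chi(|\zeta|^2)\,d\lambda(\zeta),\qquad |w|=\varepsilon,
\end{align*}
and set $\varphi_\nu=\Phi_{\varepsilon_\nu}+K\varepsilon_\nu^2$ with $\varepsilon_\nu\downarrow0$. Here $\mathrm{exp}$ is taken with respect to a K\"ahler metric whose exponential map has holomorphic second-order jet (the ``quasi-holomorphic'' exponential of \cite{Dem82}), and the kernel $\chi$ is radial, hence symmetric with respect to $\omega$. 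The first step is to recall Demailly's Hessian estimate, \cite[Prop.\,8.5 and Lemme 8.6]{Dem82}, for $\Phi$ as a function on the tube $\{|w|<\varepsilon_0\}\subset X\times\bb{C}$. One expands $\idd_z\Phi_\varepsilon$ using the second-order Taylor expansion of $\mathrm{exp}$. The leading part is an average of $\idd\varphi$, and it produces $\gamma_\nu\geq\theta$ with $\gamma_\nu\to(\idd\varphi)_{ac}$ a.e. The correction terms come from the curvature tensor $c_{jk\ell m}$ of $(T_X,h_X)$, which enters because of the quadratic part of $\mathrm{exp}$. This curvature correction appears multiplied by the quantity
\begin{align*}
    \nu(\varphi,x,\varepsilon):=\frac{\partial\Phi_\varepsilon(x)}{\partial\log\varepsilon},
\end{align*}
which is the Lelong number of $\varphi$ at scale $\varepsilon$. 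The remaining terms are $O(\varepsilon)$.

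The second step is to isolate the sign of the curvature correction. The correction is a Hermitian form in the direction $\xi$ of the Hessian, paired with the averaging direction $\eta$ over the sphere. After averaging against the symmetric kernel, its negative part at $x$ is bounded by $\tau_-(x)\,\nu(\varphi,x,\varepsilon)\,\omega$. Here one uses $\tau(x)=\inf c_{jk\ell m}\xi_j\overline{\xi}_k\eta_\ell\overline{\eta}_m/2\pi$, and the normalization of $\nu(\varphi,x,\varepsilon)$ absorbs the factor $2\pi$. Taking $\lambda_\nu(x)$ to be the optimal such constant, we get
\begin{align*}
    \lambda_\nu(x)=\tau_-(x)\,\nu(\varphi,x,\varepsilon_\nu)+O(\varepsilon_\nu),
\end{align*}
after possibly replacing $\tau_-$ by a continuous majorant converging to it, so that $\lambda_\nu$ is continuous and decreasing.

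The third step is the limit. Since $\varphi$ is quasi-psh, the mean $\Phi_\varepsilon(x)$ is, up to an $O(\varepsilon^2)$ correction, convex and increasing in $\log\varepsilon$. Hence $\nu(\varphi,x,\varepsilon)$ decreases as $\varepsilon\downarrow0$, and its limit is the Lelong number $\nu(\varphi,x)$; this is the standard characterization of Lelong numbers through spherical and kernel means. Combining this with the previous display gives $\lim_{\nu}\lambda_\nu(x)=\tau_-(x)\nu(\varphi,x)$. The main obstacle is the precise bookkeeping in step two. One has to check that the curvature term pairs exactly with $\partial\Phi/\partial\log\varepsilon$, with coefficient $\tau(x)$ and with no loss. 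One also has to check that all other error terms are genuinely $O(\varepsilon)$ uniformly, even where $\varphi$ is singular, because they too carry factors of $\nu(\varphi,\cdot,\varepsilon)$, which is locally bounded. I would follow Demailly's computation verbatim here rather than redo it.

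The consequences are then immediate. If $x\notin E_+(\varphi)$, then $\nu(\varphi,x)=0$, so the limit is $0$. If $x\in E_+(\varphi)$ and $T_X$ is Griffiths semi-positive at $x$, then $\tau(x)\geq0$, so $\tau_-(x)=0$. Hence $\lambda_\nu$ decreases to $0$ everywhere on $X$. In particular, this holds whenever $T_X$ is Griffiths semi-positive on all of $X$.
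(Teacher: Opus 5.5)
Your proposal follows essentially the same route as the paper. The paper gives no independent argument here: it invokes Demailly's computation (Lemme 8.6 and Th\'eor\`eme 9.1 of Dem82) for the formula $\lim_{\nu}\lambda_\nu(x)=\tau_-(x)\nu(\varphi,x)$, and reads off the ``Furthermore'' part exactly as you do, using $\nu(\varphi,x)=0$ off $E_+(\varphi)$ and $\tau_-=0$ where $T_X$ is Griffiths semi-positive.

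The one thing to tidy up is the definition of $\lambda_\nu$. It should be \emph{defined} as Demailly's explicit majorant $\tau_-(x)\,\nu(\varphi,x,\varepsilon_\nu)+C\varepsilon_\nu$, with $\tau$ computed for the K\"ahler metric whose exponential map is used ($\tau_-$ is already continuous, so no majorant of it is needed). Taking the ``optimal constant'' instead would only give $\limsup_\nu\lambda_\nu\le\tau_-\nu(\varphi,\cdot)$, not the stated equality.
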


This estimate in terms of the Griffiths positivity of the tangent bundle $T_X$ was further generalized in \cite{Dem92}, where a Demailly-type approximation is obtained along the sublevel sets of the Lelong number.
In particular, an important application involving Lemma \ref{Lemma: Lemme 8.6 in Dem82} is that the pseudo-effective cone \(H^{1,1}_{\rom{psef}}(X)\) coincides with the nef cone \(H^{1,1}_{\rom{nef}}(X)\) if the tangent bundle \(T_X\) is Griffiths semi-positive on \(X\) (in fact, nefness suffices) (see \cite[Corollary 1.5]{Dem92}).

\section{Proofs of the Main Results}\label{Section: Proofs}

In this section, we prove Theorems \ref{Theorem: Hard Lefschetz if TX>0 on D} and \ref{Theorem: Hard Lefschetz for semi-positivity} under the following setup.

\begin{setup}\label{Setup}
    Let $X$ be a compact \kah manifold of dimension $n$ with a \kah metric $\omega$ and $D=\sum^J_{j=1}D_j$ be a simple normal crossing divisor on $X$. 
    Let $\sigma_j$ be the defining section of $D_j$ and $h_j$ be a smooth Hermitian metric on $\Ox{D_j}$. 
    Let $L\longrightarrow X$ be a pseudo-effective line bundle, that is, $L$ admits a singular Hermitian metric $h$ with a semi-positive curvature current on $X$, i.e., $\iO{L,h}\geq0$ in the sense of currents on $X$. 
    By Demailly's approximation theorem (Theorem \ref{Theorem: Demailly approximation preserves ideal sheaves}), there exists a sequence $\{h_\nu\}_{\nu\in\bb{N}}$ of singular Hermitian metrics on $L$ satisfying the following properties: 
    \begin{itemize}
        \item [($i$)] $h_\nu$ is smooth on $X\setminus Z_\nu$, where $Z_\nu$ is an analytic subset of $X$ with $Z_\nu\subset Z_{\nu+1}$,
        \item [($ii$)] $\{h_\nu\}_{\nu\in\bb{N}}$ is increasing and converges to $h$, i.e., $h_\nu\nearrow h$ on $X$,
        \item [($iii$)] $\scr{I}(h)=\scr{I}(h_\nu)$ on $X$ for all $\nu\in\bb{N}$,
        \item [($iv$)] $\iO{L,h_\nu}\geq-\varepsilon_\nu\omega$ on $X$, where $\{\varepsilon_\nu\}_{\nu\in\bb{N}}$ is a decreasing sequence of nonnegative real numbers converging to $0$. 
    \end{itemize}
    
    We take a smooth \kah metric $\omega_P$ on $X\setminus D$ which is of Poincar\'{e}-type along $D$, satisfying $\omega_P\geq\omega$ on $X\setminus D$. Note that $\omega_P$ is complete on $X\setminus D$ (see \cite{Zuc79}).
    For each analytic subset $Z_\nu$ of $X$, there exists a quasi-plurisubharmonic functions $\psi_\nu$ on $X$ such that $\psi_\nu=-\infty$ on $Z_\nu$, $\psi_\nu$ is smooth on $X\setminus Z_\nu$, and $\omega+\idd\psi_\nu>0$ defines a complete \kah metric on $X\setminus Z_\nu$ (see \cite[Th\'eor\`em 1.5]{Dem82}).
    We can find a family 
    \begin{align*}
        \omega_{P,\nu,\delta}:=\omega_P+\delta(\omega+\idd\psi_\nu), \qquad \delta>0
    \end{align*}
    of complete \kah metrics on $X\setminus(D\cup Z_\nu)$. 

    The compactness of $X$ allows the assumption that $\psi_\nu<0$ on $X$. 
    The strong openness property (see \cite{GZ15}) implies that $\scr{I}(h_\nu)=\bigcup_{\rho>0}\scr{I}(h_\nu e^{-\rho\psi_\nu})$ on $X$. 
    By the compactness of $X$ and the strong Noetherian property of coherent sheaves (see \cite[Chapter II, (3.22)]{Dem-book}), there exists $\rho_{X,\nu}>0$ such that $\bigcup_{\rho>0}\scr{I}(h_\nu e^{-\rho\psi_\nu})=\scr{I}(h_\nu e^{-\rho_{X,\nu}\psi_\nu})$ on $X$. 
    Hence, we obtain $\scr{I}(h_\nu)=\scr{I}(h_\nu e^{-\varrho\psi_\nu})$ on $X$ for any $0<\varrho<\rho_{X,\nu}$. 
    The same argument applied to \(h\) instead of \(h_\nu\) shows that there exists some \(\tl{\rho}_X>0\) such that $\scr{I}(h)=\scr{I}(h e^{-\varrho\psi_\nu})$ on $X$ for any $0<\varrho<\tl{\rho}_X$. 
    Furthermore, it follows from the compactness of $X$ that $c_\nu\idd\psi_\nu\geq-\varepsilon_\nu\omega$ on \(X\) in the sense of currents for a sufficiently small \(c_\nu>0\).
    Hence, for any suﬃciently small number \(0<\varrho<\mu_\nu:=\min\{\rho_{X,\nu},\tl{\rho}_X,c_\nu\}\), we have the following properties:
    \begin{itemize}
        \item [$(v)$] $\scr{I}(he^{-\varrho\psi_\nu})=\scr{I}(h)=\scr{I}(h_\nu)=\scr{I}(h_\nu e^{-\varrho\psi_\nu})$ on $X$, 
        \item [$(vi)$] $\varrho\,\idd\psi_\nu\geq-\varepsilon_\nu\omega$ on $X$ in the sense of currents.
    \end{itemize}
\end{setup}

\begin{proof}[Proof of Theorem \ref{Theorem: Hard Lefschetz if TX>0 on D}]
    Let $\hbar_{D_j}:=1/|\sigma_j|^2=h_j/|\sigma_j|^2_{h_j}$ be the natural singular Hermitian metric on $\Ox{D_j}$, whose curvature current is semi-positive. 
    Let $\varphi_j:=\log|\sigma_j|^2_{h_j}$ be a quasi-plurisubharmonic weight function of $\hbar_{D_j}$. 
    Applying Theorem \ref{Theorem: Demailly approximation of smooth functions} to each $\varphi_j$, there exist a decreasing sequence $\{\varphi_{j,\nu}\}_{\nu\in\bb{N}}$ of smooth functions on $X$ 
    and a decreasing sequence $\{\lambda_{j,\nu}\}_{\nu\in\bb{N}}$ of nonnegative continuous functions on $X$ such that, upon setting $h_{j,\nu}:=h_je^{-\varphi_{j,\nu}}$ be a smooth Hermitian metric on $\Ox{D_j}$ and the following hold:
    \begin{itemize}
        \item [($a$)] $\displaystyle\lim_{\nu\to+\infty}\varphi_{j,\nu}=\varphi_j$ on $X$, i.e., $\{h_{j,\nu}\}_{\nu\in\bb{N}}$ is increasing and converges to $\hbar_{D_j}$, 
        \item [($b$)] $\iO{\Ox{D_j},h_{j,\nu}}\geq-\lambda_{j,\nu}\,\omega$ on $X$, 
        \item [($c$)] $\lambda_{j,\nu}\longrightarrow0$ almost everywhere on $X$, more precisely, $\lambda_{j,\nu}\longrightarrow0$ on $X\setminus D$, 
        \item [($d$)] $\lambda_{j,\nu}$ converges uniformly to $0$ on every compact subset of $X\setminus D$. 
    \end{itemize}
    Furthermore, by Lemma \ref{Lemma: Lemme 8.6 in Dem82} and the assumption that $T_X$ is Griffiths semi-positive on $D$, conditions $(c)$ and $(d)$ can be replaced by the following stronger conditions.
    \begin{itemize}
        \item [($c'$)] $\displaystyle\lim_{\nu\to+\infty}\lambda_{j,\nu}(x)=0$ for every $x\in X$, i.e., $\lambda_{j,\nu}\searrow0$ on $X$ as $\nu\to+\infty$,
        \item [($d'$)] letting $\displaystyle \kappa_{j,\nu}\!:=\!\max_{x\in X}\lambda_{j,\nu}(x)\!\geq\!0$, the decreasing sequence $\{\kappa_{j,\nu}\}_{\nu\in\bb{N}}$ converges to $0$.
    \end{itemize}

    For each $\nu\in\bb{N}$, we define a singular Hermitian metric $\hbar^L_{D,\nu}$ on $L|_{X\setminus D}$ by 
    \begin{align*}
        \hbar^L_{D,\nu}:=h_\nu(2\alpha)^{-2\alpha}\prod_{j=1}^{J}|\sigma_j|^2_{h_{j,\nu}}\Big(\log \big(e^{-2\alpha}|\sigma_j|^2_{h_{j,\nu}}\big)\Big)^{2\alpha},
    \end{align*}
    where $\alpha$ is a sufficiently large positive integer depending only on the compactness of $X$ and not on $h_\nu$ or $h_{j,\nu}$. Thus, each $\hldnu$ can be chosen to satisfy Theorem \ref{Theorem: Logarithmic L2-Dolbeault resolusion}.
    Lemma \ref{Lemma: hldnu increasing to h} follows from Theorem \ref{Theorem: Logarithmic L2-Dolbeault resolusion}, condition $(a)$ for the sequence \(\{\varphi_{j,\nu}\}_{\nu\in\bb{N}}\) and Lemma \ref{Lemma: function of log}. 

    \begin{lemma}\label{Lemma: hldnu increasing to h}
        A sequence $\{|\sigma_j|^2_{h_{j,\nu}}\}_{\nu\in\bb{N}}$ of smooth functions on $X$ and the sequence $\{\hldnu\}_{\nu\in\bb{N}}$ of singular Hermitian metrics on $L|_{X\setminus D}$ satisfy the following properties.
        \begin{itemize}
            \item $0\leq|\sigma_j|^2_{h_{j,\nu}}\leq1$ on $X$ for every $\nu\in\bb{N}$,
            \item 
            $|\sigma_j|^2_{h_{j,\nu}}\nearrow1$ on $X$ as $\nu\to+\infty$, 
            \item $\{\hldnu\}_{\nu\in\bb{N}}$ is increasing and converges to $h$, i.e., $\hldnu\nearrow h$ on $X$, 
            \item each $\nu$ and any $p\geq0$, the complex $\Bigl(\scr{L}^{p,\ast}_{L,\hldnu,\omega_P},\dbar\Bigr)$ is logarithmic $L^2$-Dolbeault fine resolusion of $\Omega^p_X(\log D)\otimes\Ox{L}\otimes\scr{I}(h_\nu)$. 
            In particular, we have 
            \begin{align*}
                \Omega^p_X(\log D)\otimes\Ox{L}\otimes\scr{I}(h_\nu)=\rom{Ker}\Bigl(\,\dbar:\scr{L}^{p,0}_{L,\hldnu,\omega_P}\longrightarrow\scr{L}^{p,1}_{L,\hldnu,\omega_P}\Bigr).
            \end{align*}
        \end{itemize}
    \end{lemma}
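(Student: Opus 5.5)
The first two items come from comparing $\varphi_{j,\nu}$ with $\varphi_j$, the third from the monotonicity of an explicit one-variable function, and the fourth by rewriting $\hldnu$ as a constant multiple of a metric of the form in Theorem \ref{Theorem: Logarithmic L2-Dolbeault resolusion}.

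\emph{First two items.} Write
\begin{align*}
|\sigma_j|^2_{h_{j,\nu}}=|\sigma_j|^2_{h_j}e^{-\varphi_{j,\nu}}=e^{\varphi_j-\varphi_{j,\nu}}.
\end{align*}
By $(a)$ the $\varphi_{j,\nu}$ are smooth and decrease to $\varphi_j=\log|\sigma_j|^2_{h_j}$, so $\varphi_{j,\nu}\geq\varphi_j$. This gives $0\leq|\sigma_j|^2_{h_{j,\nu}}\leq1$, with value $0$ exactly on $D_j$. The same expression is increasing in $\nu$ and tends to $e^0=1$ at every point of $X\setminus D_j$. On $D_j$ itself the functions vanish identically, so the statement ``$\nearrow1$ on $X$'' should be read off $D$ (a measure-zero set). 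That is all that matters, since every metric involved lives on $L|_{X\setminus D}$.

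\emph{Third item.} Set $f(t):=t\bigl(\log(e^{-2\alpha}t)\bigr)^{2\alpha}$ for $t\in(0,1]$; presumably this is the content of Lemma \ref{Lemma: function of log}. Here $\log(e^{-2\alpha}t)=\log t-2\alpha\leq-2\alpha<0$, and
\begin{align*}
f'(t)=(\log t-2\alpha)^{2\alpha-1}\log t\geq0,
\end{align*}
because an odd power of a negative number times $\log t\leq0$ is nonnegative. So $f$ is nonnegative and increasing, and $f(1)=(2\alpha)^{2\alpha}$. Now
\begin{align*}
\hldnu=h_\nu(2\alpha)^{-2\alpha}\prod_j f\bigl(|\sigma_j|^2_{h_{j,\nu}}\bigr)
\end{align*}
is a product of nonnegative factors, each increasing in $\nu$; $h_\nu$ is increasing by $(ii)$. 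Hence $\hldnu$ is increasing on $X\setminus D$. Its limit is $h\,(2\alpha)^{-2\alpha}\prod_j f(1)=h$, using $h_\nu\nearrow h$ and the first two items.

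\emph{Fourth item.} Put $h'_{j,\nu}:=e^{2\alpha}h_{j,\nu}$, again a smooth Hermitian metric on $\Ox{D_j}$. Then
\begin{align*}
\log\bigl(e^{-2\alpha}|\sigma_j|^2_{h_{j,\nu}}\bigr)=\log|\sigma_j|^2_{h'_{j,\nu}},\qquad |\sigma_j|^2_{h_{j,\nu}}=e^{-2\alpha}|\sigma_j|^2_{h'_{j,\nu}},
\end{align*}
so $\hldnu$ is a positive constant $C_{\alpha,J}$ times $h_\nu\prod_j|\sigma_j|^2_{h'_{j,\nu}}\bigl(\log|\sigma_j|^2_{h'_{j,\nu}}\bigr)^{2\alpha}$. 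This is exactly the metric of Theorem \ref{Theorem: Logarithmic L2-Dolbeault resolusion} built from the singular metric $h_\nu$ and the smooth metrics $h'_{j,\nu}$. The weights of $h_\nu$ are quasi-plurisubharmonic by $(iv)$, and a constant factor does not change the sheaves $\scr{L}^{p,q}_{L,\hldnu,\omega_P}$. The theorem therefore yields the fine resolution of $\Omega^p_X(\log D)\otimes\Ox{L}\otimes\scr{I}(h_\nu)$, including the kernel identity at $q=0$; by $(iii)$ the coefficient sheaf is also $\scr{I}(h)$.

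\emph{Main obstacle.} The delicate point is that the integer $\alpha$ must be chosen once, independently of $\nu$. I would check in the proof of Theorem \ref{Theorem: Logarithmic L2-Dolbeault resolusion} (\cite{Wat26a}) that $\alpha$ is needed only to make the functions $|\sigma_j|^2_{h'_j}$ uniformly small, say $\leq e^{-2\alpha}\cdot$const, and to absorb the curvature of the $\log$-factors. The normalization $|\sigma_j|^2_{h'_{j,\nu}}\leq1$, i.e.\ $|\sigma_j|^2_{h_{j,\nu}}\leq1$ from the first item, holds uniformly in $\nu$. This is what should make $\alpha$ depend only on $X$ and not on $h_\nu$ or $h_{j,\nu}$, as the setup asserts.
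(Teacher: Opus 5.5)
Your route is the same as the paper's. The paper proves this lemma in one line, citing condition $(a)$, Lemma~\ref{Lemma: function of log}, and Theorem~\ref{Theorem: Logarithmic L2-Dolbeault resolusion} applied to $h_\nu$ and the smooth metrics $h_{j,\nu}$. It also simply asserts the $\nu$-independence of $\alpha$ when it defines $\hldnu$, so your ``main obstacle'' is taken for granted there too. Your remark that $|\sigma_j|^2_{h_{j,\nu}}\equiv 0$ on $D_j$ is correct, so the second item only holds on $X\setminus D_j$.

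Two computations need fixing.

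First, the rescaling must be $h'_{j,\nu}:=e^{-2\alpha}h_{j,\nu}$, not $e^{2\alpha}h_{j,\nu}$. With $e^{2\alpha}$ you get $\log|\sigma_j|^2_{h'_{j,\nu}}=2\alpha+\log|\sigma_j|^2_{h_{j,\nu}}$, so your first displayed identity is false, and this logarithm can even vanish on $X\setminus D_j$. With $e^{-2\alpha}$ you get $|\sigma_j|^2_{h_{j,\nu}}=e^{2\alpha}|\sigma_j|^2_{h'_{j,\nu}}$ and the constant $(2\alpha)^{-2\alpha}e^{2\alpha J}$. You also get the uniform bound $|\sigma_j|^2_{h'_{j,\nu}}\le e^{-2\alpha}$, which is the $\nu$-independent normalization your last paragraph actually needs.

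Second, $(2\alpha)^{-2\alpha}\prod_{j=1}^J f(1)=(2\alpha)^{2\alpha(J-1)}$, not $1$. With the normalization as written, the limit of $\hldnu$ is therefore $(2\alpha)^{2\alpha(J-1)}h$, which equals $h$ only when $J=1$; the prefactor should be $(2\alpha)^{-2\alpha J}$. This slip is already in the paper's definition of $\hldnu$. It is harmless downstream, since a constant factor changes neither the sheaves $\scr{L}^{p,q}_{L,\hldnu,\omega_P}$ nor the later estimates beyond a constant. Still, your argument should not claim the limit is exactly $h$ without correcting the normalization.
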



    \begin{lemma}\label{Lemma: function of log}
        For any positive integer $\alpha\in\bb{N}$, the function 
        \begin{align*}
            x\Big(\log \big(e^{-2\alpha}x\big)\Big)^{2\alpha}
        \end{align*}
        is strictly increasing on $(0,1)$. Furthermore, we obtain 
        \begin{align*}
        1+\frac{2\alpha}{\log \big(e^{-2\alpha}x\big)}=\frac{\log x}{\log x-2\alpha}>0    
        \end{align*}
        for $0<x<1$, and 
        \begin{align*}
        \sup_{0<x<1}\biggl(1+\frac{2\alpha}{\log \big(e^{-2\alpha}x\big)}\biggr)=1.
        \end{align*}
    \end{lemma}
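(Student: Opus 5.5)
Substituting $y=\log x$ reduces the statement to elementary one-variable calculus. For $0<x<1$ we have $y<0$, hence $\log(e^{-2\alpha}x)=y-2\alpha<-2\alpha<0$. The function under study is therefore
\begin{align*}
    f(x)=x\,(y-2\alpha)^{2\alpha}=x\,(2\alpha-y)^{2\alpha},
\end{align*}
because the exponent $2\alpha$ is even. Here $2\alpha-y>0$, so $f>0$ on $(0,1)$ and $f$ is smooth there.

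To obtain monotonicity, differentiate logarithmically using $dy/dx=1/x$:
\begin{align*}
    \frac{f'(x)}{f(x)}=\frac{1}{x}+\frac{2\alpha}{x\,(y-2\alpha)}=\frac{1}{x}\Bigl(1+\frac{2\alpha}{\log(e^{-2\alpha}x)}\Bigr).
\end{align*}
Thus strict increase is equivalent to the positivity of the bracket, which is exactly the second assertion of the lemma. Combining the bracket over the common denominator gives
\begin{align*}
    1+\frac{2\alpha}{y-2\alpha}=\frac{y}{y-2\alpha}=\frac{\log x}{\log x-2\alpha}.
\end{align*}
Both numerator and denominator are negative on $(0,1)$, so the quotient is positive. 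Hence $f'>0$, and $f$ is strictly increasing.

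For the supremum, write $g(y)=y/(y-2\alpha)=1-\dfrac{-2\alpha}{y-2\alpha}$, that is,
\begin{align*}
    g(y)=1-\frac{2\alpha}{2\alpha-y}.
\end{align*}
For $y<0$ we have $2\alpha-y>2\alpha>0$, so $0<g(y)<1$. As $y\to-\infty$ (i.e.\ $x\to0^+$), $g(y)\to1$. Hence the supremum equals $1$ and is not attained.

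There is no real obstacle. The only points needing care are the sign bookkeeping, namely that $\log(e^{-2\alpha}x)<0$ and that the even exponent makes the power positive, and the observation that the supremum is approached as $x\to0$ rather than as $x\to1$: as $x\to1$, $g$ tends to $0$.
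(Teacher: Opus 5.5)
Your proof is correct: the substitution $y=\log x$, the logarithmic derivative $f'/f=\frac{1}{x}\bigl(1+\frac{2\alpha}{\log(e^{-2\alpha}x)}\bigr)$, the identity $1+\frac{2\alpha}{y-2\alpha}=\frac{y}{y-2\alpha}$, and the bounds $0<1-\frac{2\alpha}{2\alpha-y}<1$ with limit $1$ as $y\to-\infty$ settle all three claims. The paper states this lemma without proof and treats it as an elementary one-variable computation, so your argument supplies exactly the intended verification.
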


    
    Let $\{\beta\}\in H^q(X,\Omega^n_X(\log D)\otimes L\otimes\scr{I}(h))=H^q(X,K_X\otimes\Ox{D}\otimes L\otimes\scr{I}(h))$ be a cohomology class represented by a smooth form with values in $\Omega^n_X(\log D)\otimes L\otimes\scr{I}(h)=K_X\otimes\Ox{D}\otimes L\otimes\scr{I}(h)$. 
    Let $h^D_{C^\infty}$ be a smooth Hermitian metric on $\Ox{D}$.
    By Theorem \ref{Theorem: L2-Dolbeault resolusion} and condition $(v)$, the integrability follows for every $0\leq\varrho<\mu_\nu$. 
    \begin{align*}
        \int_X|\beta|^2_{\wedge^{n,q}\omega\otimes h\otimes h^D_{C^\infty}}e^{-\varrho\psi_\nu}dV_\omega<+\infty.
    \end{align*}
    Using the smoothness of $h^D_{C^\infty}$ on the compact space $X$ and applying Lemma \ref{Lemma: inequality of (n,0)-forms} to $\omega_P\geq\omega$, there exists $C>0$ such that
    \begin{align*}
        ||\beta||^2_{\varrho}:=\int_{X\setminus D}|\beta|^2_{\wedge^{n,q}\omega_P\otimes h}e^{-\varrho\psi_\nu}dV_{\omega_P}&\leq\int_{X\setminus D}|\beta|^2_{\wedge^{n,q}\omega\otimes h}e^{-\varrho\psi_\nu}dV_\omega\\
        &\leq C\int_X|\beta|^2_{\wedge^{n,q}\omega\otimes h\otimes h^D_{C^\infty}}e^{-\varrho\psi_\nu}dV_\omega<+\infty.
    \end{align*}
    Note that \(\varrho\) is allowed to be \(0\), and set $||\beta||^2:=||\beta||^2_0$.
    Since $e^{-\varrho\psi_\nu}\searrow1$ pointwise almost everywhere on $X\setminus D$ as \(\varrho\to0\), the Lebesgue monotone convergence theorem yields $||\beta||^2_\varrho\searrow||\beta||^2$. 
    Therefore, we take a sufficiently small $\varrho_\nu\in(0,\mu_\nu)$ satisfying $||\beta||^2_{\varrho_\nu}\leq 2||\beta||^2$, and define the singular Hermitian metric $\hldpn$ on $L|_{X\setminus D}$ by adding $\varrho_\nu\psi_\nu$ to $\hldnu$ as a weight function (as required for Lemma \ref{Lemma: log resolusion for (n,q)-forms} below): 
    \begin{align*}
        \hldpn:=\hldnu e^{-\varrho_\nu\psi_\nu}=h_\nu e^{-\varrho_\nu\psi_\nu}(2\alpha)^{-2\alpha}\prod_{j=1}^{J}|\sigma_j|^2_{h_{j,\nu}}\Big(\log \big(e^{-2\alpha}|\sigma_j|^2_{h_{j,\nu}}\big)\Big)^{2\alpha}.
    \end{align*} 
    By Lemmas \ref{Lemma: inequality of (n,0)-forms} and \ref{Lemma: hldnu increasing to h} and the monotonicity of $\opnd$, we obtain 
    \begin{align*}
        ||\beta||^2_{\nu,\psi,\delta}:=&\int_{X\setminus(D\cup Z_\nu)}\hspace{-4mm}|\beta|^2_{\wedge^{n,q}\opnd\otimes\hldpn}dV_{\opnd}=\int_{X\setminus(D\cup Z_\nu)}\hspace{-4mm}|\beta|^2_{\wedge^{n,q}\opnd\otimes\hldnu}e^{-\varrho_\nu\psi_\nu}dV_{\opnd}\\
        \leq&\,||\beta||^2_{\varrho_\nu}\!=\!\int_{X\setminus D}|\beta|^2_{\wedge^{n,q}\omega_P\otimes h}e^{-\varrho_\nu\psi_\nu}dV_{\omega_P}
        \leq2||\beta||^2\!=\!2\!\int_{X\setminus D}|\beta|^2_{\wedge^{n,q}\omega\otimes h}dV_\omega<+\infty.
    \end{align*} 
    Here, it should be noted that $\opnd$ is of \Pt along $D$ but is not necessarily smooth on $X\setminus D$. 
    Define the sheaf $\scr{L}^{p,q}_{L,\hldpn,\opnd}$ in the same way as $\scr{L}^{p,q}_{L,\hldnu,\omega_P}$. 
    In particular, we obtain $\Gamma\Big(X\setminus (D\cup Z_\nu),\scr{L}^{n,q}_{L,\hldpn,\omega_{P,\nu,\delta}}\Big)$
    \begin{align*}
        =\!\biggl\{u\!:\!X\!\setminus\! D\longrightarrow\!\bigwedge^{n,q}\!T^*_X\!\otimes\! L\,\bigg|\int_{X\setminus (D\cup Z_\nu)}\hspace{-5mm}\big(|u|^2_{\wedge^{n,q}\opnd\otimes\hldpn}\!+\!\big|\dbar u\big|^2_{\wedge^{n,q+1}\opnd\otimes\hldpn}\big)dV_{\opnd}\!\!<\!+\infty\!\biggr\}.
    \end{align*}

    \begin{lemma}\label{Lemma: log resolusion for (n,q)-forms}
        For all $\nu\in\bb{N}$ and $\delta\geq0$, the complex $\Big(\scr{L}^{n,\ast}_{L,\hldpn,\omega_{P,\nu,\delta}},\dbar\Big)$ is logarithmic $L^2$-Dolbeault resolusion of $\Omega^n_X(\log D)\otimes L\otimes\scr{I}(h_\nu e^{-\varrho_\nu\psi_\nu})=K_X\otimes\Ox{D}\otimes L\otimes\scr{I}(h)$.
        Thus, for any $q\geq0$, we have the logarithmic $L^2$-Dolbeault isomorphism 
        \begin{align*}
            H^q(X,\Omega^n_X(\log D)\otimes L\otimes\scr{I}(h))\cong H^q\Bigl(\Gamma\Big(X\setminus(D\cup Z_\nu),\scr{L}^{n,\ast}_{L,\hldpn,\omega_{P,\nu,\delta}}\Big)\Bigr).
        \end{align*}
    \end{lemma}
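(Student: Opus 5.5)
The plan is to reduce Lemma \ref{Lemma: log resolusion for (n,q)-forms} to Theorem \ref{Theorem: Logarithmic L2-Dolbeault resolusion} applied to the metric $h_\nu e^{-\varrho_\nu\psi_\nu}$ on $L$, whose local weights are quasi-plurisubharmonic. The two differences between $\hldpn$ and the metric $\hbar^{L}_{D}$ appearing there are harmless. First, the smooth metrics $h_j$ are replaced by $h_{j,\nu}$, and the factor $(2\alpha)^{-2\alpha}$ together with the shift $\log(e^{-2\alpha}\bullet)$ is inserted. Since each $h_{j,\nu}$ is a smooth metric on $\Ox{D_j}$, it can serve as the fixed metric in that theorem. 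The shifted logarithm is comparable to $\log|\sigma_j|^2_{h_{j,\nu}}$ up to constants near $D$, because both blow up like $\log|\sigma_j|$. Away from $D$ everything is smooth and bounded above and below. Hence the sheaves $\scr{L}^{n,q}$ agree with those built from $\hbar^L_D$.

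The identification of the multiplier ideals is given by condition $(v)$: $\scr{I}(h_\nu e^{-\varrho_\nu\psi_\nu})=\scr{I}(h)$. This yields the stated sheaf $K_X\otimes\Ox{D}\otimes L\otimes\scr{I}(h)$.

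The real issue is the base metric $\opnd$ instead of $\omega_P$ on $X\setminus(D\cup Z_\nu)$, for $\delta>0$. Here I would exploit that we only deal with $(n,q)$-forms. By Lemma \ref{Lemma: inequality of (n,0)-forms}, $\opnd\geq\omega_P$ gives $|u|^2_{\opnd}dV_{\opnd}\leq|u|^2_{\omega_P}dV_{\omega_P}$. So local $L^2$ conditions with respect to $\omega_P$ imply those for $\opnd$.

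For exactness I would argue locally. Near a point of $Z_\nu$ (or of $D\cap Z_\nu$), take a small Stein neighbourhood $U$ (a polydisc adapted to $D$). On $U\setminus(D\cup Z_\nu)$, the metric $\opnd$ is complete Kähler, because the term $\delta(\omega+\idd\psi_\nu)$ is complete near $Z_\nu$ and $\omega_P$ is complete near $D$. The weight $\hldpn$ twisted by a strictly psh bounded function $|z|^2$ has curvature bounded below. So Hörmander's $L^2$ estimate for $(n,q)$-forms on complete Kähler manifolds, in the form where the estimate for $(n,\ast)$ forms is insensitive to the base metric, solves $\dbar v=u$ with $v\in L^2$ for $\dbar$-closed $u$ with $q\geq1$. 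The curvature negativity contributed by $\iO{L,h_\nu}\geq-\varepsilon_\nu\omega$, by $\varrho_\nu\idd\psi_\nu\geq-\varepsilon_\nu\omega$ (condition $(vi)$), and by the $D_j$ factors (Lemma \ref{Lemma: function of log} controls the sign of the log terms) is absorbed by a large multiple of $|z|^2$ and by $\omega_P\geq\omega$. This handles the hardest part.

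For $q=0$, an $L^2$ holomorphic $(n,0)$-form's norm is independent of the base metric. Holomorphic sections across $Z_\nu$ extend by Riemann extension, since $L^2$ with a psh-type weight bounded above extends over the analytic set $Z_\nu$. Near $D$ one reduces to the $\delta=0$ case, i.e.\ Theorem \ref{Theorem: Logarithmic L2-Dolbeault resolusion}, and identifies the kernel with $\Omega^n_X(\log D)\otimes L\otimes\scr{I}(h)$.

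Fineness is clear, since multiplication by smooth cutoffs preserves the $L^2$ conditions because $\opnd\geq\omega$. The abstract de Rham–Weil theorem then gives the stated isomorphism.
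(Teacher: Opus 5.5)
You handle the degree-zero part (the $(n,0)$-norm is metric independent, so the kernel is the one from Theorem~\ref{Theorem: Logarithmic L2-Dolbeault resolusion} applied to $h_\nu e^{-\varrho_\nu\psi_\nu}$), the multiplier ideal via condition $(v)$, and fineness as the paper does. The gap is in the step you call the hardest: exactness for $q\geq1$ at points of $Z_\nu$ when $\delta>0$.

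The $L^2$ estimate on $(U^*,\opnd)$ gives $\int_{U^*}|u|^2dV_{\opnd}\leq\int_{U^*}\langle[\iO{L,\hdt},\Lambda_{\opnd}]^{-1}f,f\rangle\,dV_{\opnd}$. This is bounded by a multiple of $\int_{U^*}|f|^2dV_{\opnd}$ only if the curvature dominates a positive multiple of $\opnd$ itself. Your scheme treats $\varrho_\nu\idd\psi_\nu$ purely as negativity (via $(vi)$) and adds a bounded weight $|z|^2$. It therefore yields at best a lower bound by a multiple of $\omega$, or $\omega+\omega_P$ if the logarithmic gradient terms are kept. But $\opnd=\omega_P+\delta(\omega+\idd\psi_\nu)$ blows up in the directions normal to $Z_\nu$. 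So such a lower bound gives no uniform lower bound on the curvature eigenvalues with respect to $\opnd$, and the right-hand side can be infinite for $\dbar$-closed $f\in L^2(\opnd)$.

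The monotonicity in the base metric that you invoke does not rescue this. It bounds the $\opnd$-quantity by the corresponding quantity for a smaller metric $\gamma\leq\opnd$. However, for $(n,q)$-forms with $q\geq1$, Lemma~\ref{Lemma: inequality of (n,0)-forms} gives $|f|^2_{\opnd}dV_{\opnd}\leq|f|^2_\gamma dV_\gamma$, which is the wrong direction. Hence $f\in L^2(\opnd)$ does not make the $\gamma$-bound finite. For the same reason you cannot reduce to $\delta=0$: the sheaf $\scr{L}^{n,q}_{L,\hldpn,\opnd}$ is strictly larger than its $\delta=0$ counterpart.

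The missing idea is the reason $e^{-\varrho_\nu\psi_\nu}$ is built into $\hldpn$. You must use the \emph{positive} part of $\varrho_\nu\idd\psi_\nu$ to dominate the completeness term $\delta(\omega+\idd\psi_\nu)$ of $\opnd$, as follows for $0<\delta<\varrho_\nu<1$.
First, write $\varrho_\nu\idd\psi_\nu=\delta\idd\psi_\nu+(\varrho_\nu-\delta)\idd\psi_\nu$, and bound the second piece below by $-\omega$ using $\omega+\idd\psi_\nu>0$.
Second, use the terms $2\alpha\, i\partial\log|\sigma_j|^2_{h_{j,\nu}}\wedge\dbar\log|\sigma_j|^2_{h_{j,\nu}}/(\log(e^{-2\alpha}|\sigma_j|^2_{h_{j,\nu}}))^2$ together with $c\,\idd|z|^2$ to dominate $\omega_P$. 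Tracking only the sign of the $D_j$-terms, as you do, is not enough near $D\cap Z_\nu$ either.
Third, absorb the remaining bounded negativity into the weight $e^{-(1+c+\tau_\nu)|z|^2}$.

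This gives $\iO{L,\hdt}\geq\omega_P+\omega+\delta\idd\psi_\nu\geq\opnd$ on $U^*$. For general $\delta$ one gets $\iO{L,\hdt}\geq\wp\,\opnd$ for some $\wp>0$, e.g.\ $\wp=\min\{1,\varrho_\nu/\delta\}$. Hence $[\iO{L,\hdt},\Lambda_{\opnd}]\geq q\wp$ on $(n,q)$-forms, and $\|u\|^2\leq(q\wp)^{-1}\|f\|^2<+\infty$. Strong openness, through condition $(v)$, is what makes this extra weight harmless for the multiplier ideal. This is exactly the point the author flags as unjustified in the corresponding exactness claim of \cite{DPS01}.
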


    The proof of Lemma \ref{Lemma: log resolusion for (n,q)-forms} is given after this proof.
    Since $\Gamma\Big(X\setminus(D\cup Z_\nu),\scr{L}^{n,q}_{L,\hldpn,\omega_{P,\nu,\delta}}\Big)$ is a Hilbert space, its closed subspace
    \begin{align*}
        &Z^q\Big(\scr{L}^{n,\ast}_{L,\hldpn,\omega_{P,\nu,\delta}}\Big)=\rom{Ker}\Big(\,\dbar:\scr{L}^{n,q}_{L,\hldpn,\omega_{P,\nu,\delta}}\longrightarrow\scr{L}^{n,q+1}_{L,\hldpn,\omega_{P,\nu,\delta}}\Big)\quad \text{on}\quad X\setminus(D\cup Z_\nu)\\
        &\qquad=\!\biggl\{u\!:\!X\!\setminus\!(D\cup Z_\nu)\!\longrightarrow\!\bigwedge^{n,q}\!T^*_X\!\otimes\! L\,\bigg|\,\,\!\dbar u=0,\!\int_{X\setminus(D\cup Z_\nu)}\hspace{-4mm}|u|^2_{\wedge^{n,q}\opnd\otimes\hldpn}dV_{\opnd}\!<\!+\infty\!\biggr\}
    \end{align*}
    is clearly a Hilbert space. 
    By Lemma \ref{Lemma: log resolusion for (n,q)-forms} and the proof of the de Rham-Weil isomorphism, the map $\alpha\mapsto\{\alpha\}$ from the cocycle space $Z^q\Big(\scr{L}^{n,\ast}_{L,\hldpn,\omega_{P,\nu,\delta}}\Big)$ equipped with its $L^2$-topology, into $H^q(X,\Omega^n_X(\log D)\otimes L\otimes\scr{I}(h))$ equipped with its finite vector space topology, is continuous. 
    Banach's open mapping theorem implies that the coboundary space $B^q\Big(\scr{L}^{n,\ast}_{L,\hldpn,\omega_{P,\nu,\delta}}\Big)$ is closed in $Z^q\Big(\scr{L}^{n,\ast}_{L,\hldpn,\omega_{P,\nu,\delta}}\Big)$. This is true for all $\delta\geq0$ and the limit case $\delta = 0$ yields the strongest $L^2$-topology in bidegree $(n,q)$. 
    Thus, the Hilbert space $Z^q\Big(\scr{L}^{n,\ast}_{L,\hldpn,\omega_{P,\nu,\delta}}\Big)$ admits an orthogonal decomposition, and \(\beta\) is a \(\dbar\)-closed form in the Hilbert space induced by the complete \kah metric \(\opnd\) on \(X\setminus (D\cup Z_\nu)\). Therefore, there exists an \(\opnd\)-harmonic form \(u_{\nu,\delta}\) representing the same cohomology class as \(\beta\), satisfying 
    \begin{align*}
        ||u_{\nu,\delta}||^2_{\nu,\psi,\delta}\leq||\beta||^2_{\nu,\psi,\delta} \quad\text{and}\quad \overline{\Box}_{\nu,\delta}u_{\nu,\delta}=\big(\dbar^*_{\nu,\delta}\dbar+\dbar\dbar^*_{\nu,\delta}\big) u_{\nu,\delta}=0,
    \end{align*}
    where $\dbar^*_{\nu,\delta}=\dbar^*_{\hldpn,\opnd}$. The existence of a harmonic representative holds true only for $\delta > 0$, because we need to have a complete \kah metric on $X\setminus (D\cup Z_\nu)$.

    Let $v_{\nu,\delta}$ be the unique $(n-q,0)$-form such that $u_{\nu,\delta}=v_{\nu,\delta}\wedge\opnd^q$, whose existence follows from the pointwise Lefschetz isomorphism. 
    Then, we obtain
    \begin{align*}
        (q!)^2||v_{\nu,\delta}||^2_{\nu,\psi,\delta}=||u_{\nu,\delta}||^2_{\nu,\psi,\delta}\leq||\beta||^2_{\nu,\psi,\delta}\leq2||\beta||^2,
    \end{align*}
    where \(u_{\nu,\delta}\) is primitive as a \((0,n-q)\)-form. 
    A straightforward computation shows that
    \begin{align*}
        \iO{L,\hldpn}&=\iO{L,h_\nu}+\varrho_\nu\idd\psi_\nu+i\sum^J_{j=1}\Theta_{\Ox{D_j},h_{j,\nu}}\\
        &\qquad+i\sum^J_{j=1}\frac{2\alpha\Theta_{\Ox{D_j},h_{j,\nu}}}{\log\big(e^{-2\alpha}|\sigma_j|^2_{h_{j,\nu}}\big)}+i\sum^J_{j=1}\frac{2\alpha\partial\log|\sigma_j|^2_{h_{j,\nu}}\wedge\dbar\log|\sigma_j|^2_{h_{j,\nu}}}{\Big(\log\big(e^{-2\alpha}|\sigma_j|^2_{h_{j,\nu}}\big)\Big)^2}\\
        &\geq\iO{L,h_\nu}+\varrho_\nu\idd\psi_\nu+i\sum^J_{j=1}\biggl(1+\frac{2\alpha}{\log\big(e^{-2\alpha}|\sigma_j|^2_{h_{j,\nu}}\big)}\biggr)\Theta_{\Ox{D_j},h_{j,\nu}}\\
        &\geq-\Biggl(2\varepsilon_\nu+\sum^J_{j=1}\biggl(1+\frac{2\alpha}{\log\big(e^{-2\alpha}|\sigma_j|^2_{h_{j,\nu}}\big)}\biggr)\lambda_{j,\nu}\Biggr)\omega\\
        &\geq-\Bigl(2\varepsilon_\nu+\sum^J_{j=1}\lambda_{j,\nu}\Bigr)\omega\\
        &\geq-\Bigl(2\varepsilon_\nu+\sum^J_{j=1}\lambda_{j,\nu}\Bigr)\opnd,
    \end{align*}
    using Lemma \ref{Lemma: function of log} and conditions $(vi)$ and $(b)$.
    Since $\hldpn$ is smooth and $\opnd$ is complete on $X\setminus(D\cup Z_\nu)$, and the curvature $\iO{L,\hldpn}$ is bounded from below, we can therefore apply Proposition \ref{Proposition: Prop in DPS01} to $\big(X\setminus(D\cup Z_\nu),\opnd,L,\hldpn\big)$, and the Bochner formula yields 
    \begin{align*}
        (q!)^2\big|\big|\dbar v_{\nu,\delta}\big|\big|^2_{\nu,\psi,\delta}&\leq \frac{q}{q+1}\int_{X\setminus(D\cup Z_\nu)}\Bigl(2\varepsilon_\nu+\sum^J_{j=1}\lambda_{j,\nu}\Bigr)|u_{\nu,\delta}|^2_{\wedge^{n,q}\opnd\otimes\hldpn}dV_{\opnd}\\
        &\leq \frac{q}{q+1}\Bigl(2\varepsilon_\nu+\sum^J_{j=1}\kappa_{j,\nu}\!\Bigr)||u_{\nu,\delta}||^2_{\nu,\psi,\delta}\leq \frac{2q}{q+1}\Bigl(2\varepsilon_\nu+\sum^J_{j=1}\kappa_{j,\nu}\Bigr)||\beta||^2.
    \end{align*}
    These uniform bounds imply that there are subsequences $u_{\nu,\delta_k}$ and $v_{\nu,\delta_k}$ with $\delta_k\to 0$, possessing weak-$L^2$ limits $u_\nu = \lim_{k\to+\infty}u_{\nu,\delta_k}$ and $v_\nu = \lim_{k\to+\infty}v_{\nu,\delta_k}$. 
    For simplicity, we write $L^2_{p,q}(\opnd):=L^2_{p,q}\bigl(X\setminus(D\cup Z_\nu),L;\opnd,\hldpn\bigr)$.
    Here $v_\nu = \lim_{k\to+\infty}v_{\nu,\delta_k}$ is the limit with respect to $L^2_{0,n-q}(\omega_P):=L^2_{0,n-q}\bigl(X\setminus(D\cup Z_\nu),L;\omega_P,\hldpn\bigr)=L^2_{0,n-q}(\omega_{P,\nu,0})$. 
    This follows from Lemma \ref{Lemma: inequality of (n,0)-forms}, since $L^2_{0,n-q}(\omega_P)$ has the weakest topology among all $L^2_{0,n-q}(\opnd)$ spaces.
    On the other hand \(u_\nu=\lim_{k\to+\infty}u_{\nu,\delta_k}\) exists in every space \(L^2_{n,q}(\opnd)\) for \(\delta>0\). This is because the topology becomes stronger as \(\delta\searrow0\). 
    (However this convergence does not necessarily hold in \(L^2_{n,q}(\omega_P)\). Indeed in bidegree \((n,q)\) the topology of \(L^2_{n,q}(\omega_P)\) can be strictly stronger than that of all spaces \(L^2_{n,q}(\opnd)\).)

    The above estimates yield 
    \begin{align*}
        (q!)^2||v_\nu||^2_\nu
        &=||u_\nu||^2_\nu:=\int_{X\setminus D}|u_\nu|^2_{\wedge^{n,q}\omega_P\otimes\hldnu}dV_{\omega_P}\\
        &\leq||u_\nu||^2_{\nu,\psi}:=\int_{X\setminus D}|u_\nu|^2_{\wedge^{n,q}\omega_P\otimes\hldnu}e^{-\varrho_\nu\psi_\nu}dV_{\omega_P}\leq 2||\beta||^2,\\
        \big|\big|\dbar v_\nu\big|\big|^2_\nu&\leq\frac{2}{(q+1)!(q-1)!}\Bigl(2\varepsilon_\nu+\sum^J_{j=1}\kappa_{j,\nu}\Bigr)||\beta||^2,\\
        u_\nu&=\omega^q_P\wedge v_\nu\equiv\beta \qquad \text{in}\quad H^q(X,\Omega^n_X(\log D)\otimes L\otimes\scr{I}(h_\nu e^{-\varrho_\nu\psi_\nu})).
    \end{align*}
    The last equality can be checked via the de Rham-Weil isomorphism, by using condition $(v)$ and the fact that the map $Z^q\Big(\scr{L}^{n,\ast}_{L,\hldpn,\omega_{P,\nu,\delta}}\Big)\!\longrightarrow\! H^q(X,\Omega^n_X(\log D)\otimes L\otimes\scr{I}(h)):\alpha\mapsto\{\alpha\}$ is continuous. 
    Again, using the monotonicity of $\{\hldnu\}_{\nu\in\bb{N}}$ from Lemma \ref{Lemma: hldnu increasing to h} and considering a fixed given Hilbert space $L^2(\hbar^L_{D,\nu_0})$, we find $L^2$ convergent subsequences $u_\nu\to u$, $v_\nu\to v$ as $\nu\to+\infty$. 
    It follows from conditions $(iii)$ and $(d')$ above that 
    \begin{align*}
        \big|\big|\dbar v_\nu\big|\big|^2_\nu\leq \frac{2}{(q+1)!(q-1)!}\Bigl(2\varepsilon_\nu+\sum^J_{j=1}\kappa_{j,\nu}\Bigr)||\beta||^2\longrightarrow 0
    \end{align*} 
    as $\nu\to+\infty$, and together with the $L^2_{loc}$-convergence $v_\nu\to v$, we obtain $\dbar v=0$ in the sense of distributions.
    Indeed, for any test form $\phi$, 
    \begin{align*}
        \Big|\big\langle\dbar v_\nu,\phi\big\rangle_{\nu_0}\Big|\leq\big|\big|\dbar v_\nu\big|\big|^2_{\nu_0}||\phi||^2_{\nu_0}\leq\big|\big|\dbar v_\nu\big|\big|^2_\nu||\phi||^2_{\nu_0}\longrightarrow0,
    \end{align*}
    and hence 
    \begin{align*}
        \big\langle\dbar v,\phi\big\rangle_{\nu_0}=\big\langle v,\dbar^*_{\nu_0}\psi\big\rangle_{\nu_0}=\lim_{\nu\to+\infty}\big\langle v_\nu,\dbar^*_{\nu_0}\psi\big\rangle_{\nu_0}=\lim_{\nu\to+\infty}\big\langle\dbar v_\nu,\psi\big\rangle_{\nu_0}=0.
    \end{align*}
    Furthermore, the above inequalities yield 
    \begin{align*}
        ||v||^2&=\int_{X\setminus D}|v|^2_{\wedge^{0,n-q}\omega_P\otimes h}dV_{\omega_P}\leq2||\beta||^2<+\infty,\\
        u&=\omega^q_P\wedge v\equiv\beta\qquad\text{in}\quad H^q(X,\Omega^n_X(\log D)\otimes L\otimes\scr{I}(h)). 
    \end{align*}
    The last equality can also be verified using the de Rham-Weil isomorphism, where the equisingularity property $\scr{I}(h)=\scr{I}(h_\nu e^{-\varrho_\nu\psi_\nu})$, i.e., condition $(v)$, plays a crucial role. 
    In particular, for any integer $\nu\geq\nu_0$, we obtain 
    \begin{align*}
        ||v||^2_\nu=\int_{X\setminus D}|v|^2_{\wedge^{0,n-q}\omega_P\otimes\hldnu}dV_{\omega_P}\leq2||\beta||^2<+\infty.
    \end{align*}
    Finally, the condition on the section \(v\), together with Theorem \ref{Theorem: Logarithmic L2-Dolbeault resolusion} or Lemma \ref{Lemma: hldnu increasing to h}, yields 
    \begin{align*}
        v\in \Gamma\Big(X\setminus D,\scr{L}^{n-q,0}_{L,\hbar^L_{D,\nu_1}\!,\,\omega_P}\Big)\cap\rom{Ker}\,\dbar=H^0(X,\Omega_X^{n-q}(\log D)\otimes L\otimes\scr{I}(h)),
    \end{align*}
    where $\nu_1$ is an integer with $\nu_1\geq\nu_0$.
    This completes the proof.
\end{proof}

\begin{proof}[Proof of Lemma \ref{Lemma: log resolusion for (n,q)-forms}]
    It is enough to prove that the logarithmic $L^2$-Dolbeault complex
    \begin{align*}
        0\longrightarrow \Omega_X^n(\log D)\otimes\cal{O}_X(L)\otimes\scr{I}(h_\nu e^{-\varrho_\nu\psi_\nu})\hookrightarrow\scr{L}^{n,0}_{L,\hldpn,\omega_{P,\nu,\delta}}\overset{\dbar}{\longrightarrow}\scr{L}^{n,1}_{L,\hldpn,\omega_{P,\nu,\delta}}\overset{\dbar}{\longrightarrow}\cdots
    \end{align*}
    is exact. Theorem \ref{Theorem: Logarithmic L2-Dolbeault resolusion} shows that the complex is exact when \(\delta=0\). 
    The issue is that, when \(\delta>0\), additional completeness is imposed on \(\opnd\) along \(Z_\nu\). 
    First, as stated in Lemma \ref{Lemma: inequality of (n,0)-forms}, the product of the norm of \((n,0)\)-forms and the volume form is independent of the choice of metric. 
    Hence, the above complex is exact for \(q=0\), that is, 
    \begin{align*}
        \Omega^n_X(\log D)\otimes\Ox{L}\otimes\scr{I}(h_\nu e^{-\varrho_\nu\psi_\nu})=\rom{Ker}\Big(\,\dbar:\scr{L}^{n,0}_{L,\hldpn,\omega_{P,\nu,\delta}}\longrightarrow\scr{L}^{n,1}_{L,\hldpn,\omega_{P,\nu,\delta}}\Big).
    \end{align*}

    We prove the exactness of the above complex for \(q\geq 1\). As is well known, the exactness on \(X\setminus (D\cup Z_\nu)\) follows from the \(L^2\)-estimates for $\big(\hldpn,\opnd\big)$, obtained by adding a smooth strictly plurisubharmonic function as a weight. 
    This is because the completeness of $\opnd$ causes no difficulty on \(X\setminus (D\cup Z_\nu)\). The main issue is therefore the exactness at a point \(x_0\in D\cup Z_\nu\). To this end, we take a sufficiently small Stein neighborhood \(U\) of \(x_0\) and prove that the \(L^2\)-estimates hold on \(U\) with respect to $\big(\hldpn,\opnd\big)$. 
    Here, the fact that \(\psi_\nu\) has been added as a weight plays a crucial role. 

    Let $(z_1,\ldots,z_n)$ be local coordinates on $U$ chosen to satisfy $\idd|z|^2\geq\omega$, and set $U^*:=U\setminus(D\cup Z_\nu)$ which is also Stein. 
    For simplicity, assume that $0<\delta<\varrho_\nu<1$. Since the argument is local, following \cite{HLWY23,Wat26a}, we may assume that
    \begin{align*}
        c\,\idd|z|^2+i\sum^J_{j=1}\frac{2\alpha\partial\log|\sigma_j|^2_{h_{j,\nu}}\wedge\dbar\log|\sigma_j|^2_{h_{j,\nu}}}{\big(\log\big(e^{-2\alpha}|\sigma_j|^2_{h_{j,\nu}}\big)\big)^2}\geq\omega_P
    \end{align*}
    on $U\setminus D$ for some $c>0$. Let $\tau_\nu:=\varepsilon_\nu+\sum^J_{j=1}\kappa_{j,\nu}\!>\!0$, and write $\hbar^{\psi,\tau}_{D,\nu}:=\hldpn\,e^{-(1+c+\tau_\nu)|z|^2}$. 
    Then $\hbar^{\psi,\tau}_{D,\nu}\sim\hldpn$ on $U^*$, and we obtain 
    \begin{align*}
        \iO{L,\hbar^{\psi,\tau}_{D,\nu}}&=\iO{L,h_\nu}+\varrho_\nu\idd\psi_\nu+(1+c+\tau_\nu)\idd|z|^2+i\sum_{j\in J}\Theta_{\Ox{D_j},h_{j,\nu}}\\
        &\qquad+i\sum_{j\in J}\frac{2\alpha\Theta_{\Ox{D_j},h_{j,\nu}}}{\log\big(e^{-2\alpha}|\sigma_j|^2_{h_{j,\nu}}\big)}+i\sum_{j\in J}\frac{2\alpha\partial\log|\sigma_j|^2_{h_{j,\nu}}\wedge\dbar\log|\sigma_j|^2_{h_{j,\nu}}}{\big(\log\big(e^{-2\alpha}|\sigma_j|^2_{h_{j,\nu}}\big)\big)^2}\\
        &\geq-\Big(\varepsilon_\nu+\sum_{j\in J}\lambda_{j,\nu}\Big)\omega+(1+\tau_\nu)\omega+\delta\idd\psi_\nu+\omega_P\\
        &\geq\omega_P+\omega+\delta\idd\psi_\nu\geq\opnd
    \end{align*}
    on $U^*$. Hence, $\big[\iO{L,\hbar^{\psi,\tau}_{D,\nu}},\Lambda_{\opnd}\big]\geq q\cdot\rom{Id}_L$ holds on $\bigwedge^{n,q}T^*_X\otimes L|_{U^*}$, and thus it is uniformly positive. 
    By $L^2$-estimates (see \cite[\!VIII, \!Theorem 5.2]{Dem-book}), for any $\dbar$-closed section 
    \begin{align*}
        f\in \Gamma\Big(U^*,\scr{L}^{n,q}_{L,\hldpn,\opnd}\Big)\cap\rom{Ker}\,\dbar=L^2_{n,q}\big(U^*,L;\opnd,\hldpn\big)=L^2_{n,q}\big(U^*,L;\opnd,\hdt\big),
    \end{align*}
    there exists $u\in L^2_{n,q-1}\big(U^*,L;\opnd,\hdt\big)$ such that $\dbar u=f$ on $U^*$ and 
    \begin{align*}
        \int_{U^*}|u|^2_{\hdt,\opnd}dV_{\opnd}&\leq\int_{U^*}\big\langle\big[\iO{L,\hbar^{\psi,\tau}_{D,\nu}},\Lambda_{\opnd}\big]^{-1}f,f\big\rangle_{\hdt,\opnd}dV_{\opnd} \tag*{$(\ast)$}\\
        &\leq\frac{1}{q}\int_{U^*}|f|^2_{\hdt,\opnd}dV_{\opnd}<+\infty.
    \end{align*}
    In particular, since $u\in\Gamma\Big(U^*,\scr{L}^{n,q-1}_{L,\hldpn,\opnd}\Big)$, the exactness of the above complex follows.

    Replacing $c>0$ if necessary, we still have $\hdt\sim\hldpn$ on \(U^*\), and obtain $L^2(U^*,\opnd)$ $=L^2(U^*,C\opnd)$ for any \(C>0\). 
    In general, even without the above assumptions on $c$ and $\delta$, there exists a constant $\wp>0$ such that $\iO{L,\hdt}\geq\wp\,\opnd$ on $U^*$, and the same argument applies.
\end{proof}

As a remark, adding $\psi_\nu$ to $\hldnu$ as a weight is necessary for the exactness of the complex in Lemma \ref{Lemma: log resolusion for (n,q)-forms}. 
Indeed, even without \(\psi_\nu\), the curvature of $\hldnu e^{-(1+c+\tau_\nu)|z|^2}$ can be made positive on \(U^*\) locally by the same argument as above. 
However, it cannot be bounded below by \(\wp\,\opnd\) for some \(\wp>0\). Equivalently, its bracket with \(\Lambda_{\opnd}\) cannot be uniformly positive, but degenerates to zero as one approaches \(Z_\nu\).
This is because, due to the completeness of \(\opnd\) along \(Z_\nu\), the eigenvalues of \(\iO{L,\hldnu}\) with respect to \(\opnd\) degenerate to zero as one approaches \(Z_\nu\).
It is therefore not clear whether the boundedness in \((\ast)\) follows solely from the integrability of being a local section of \(\scr{L}^{n,q}_{L,\hldnu,\opnd}\), and hence it is also unclear whether the \(L^2\)-estimate can be applied with respect to \(\opnd\).
In this respect, there appears to be a gap in the proof of the exactness of the complex \(\big(\cal{K}^q_{\varepsilon,\delta},\dbar\big)\) in \cite[Theorem 2.1.1]{DPS01}. In this paper, we overcome this difficulty by using the strong openness property \cite{GZ15}, as in the proof of Theorem \ref{Theorem: Hard Lefschetz if TX>0 on D}.

\vspace{3mm}

We consider removing the assumption on $D$ in Theorem \ref{Theorem: Hard Lefschetz if TX>0 on D} by using the following map induced by multiplication with $\sigma_D$:
\begin{align*}
    \times\sigma_D: H^q(X,K_X\otimes L\otimes\scr{I}(h))\longrightarrow H^q(X,K_X\otimes\Ox{D}\otimes L\otimes\scr{I}(h)).
\end{align*}
In this proof of Theorem \ref{Theorem: Hard Lefschetz if TX>0 on D}, even without assuming the positivity of the tangent bundle $T_X$ on \(D\), the function $2\varepsilon_\nu+\sum_J\lambda_{j,\nu}$ decreases to \(0\) on \(X\setminus D\) as \(\nu\to+\infty\), and its limit on \(D\) is also finite. 
It is therefore natural to ask whether the surjectivity of \(\Phi^q_{\omega_P,h}\) still holds without this assumption. However, this seems unlikely, since the singularities of the natural singular Hermitian metric on \(\Ox{D}\) are not adequately captured. 
Nevertheless, if \(\beta\) and each \(u_{\nu,\delta}\) can be written as $u_{\nu,\delta}=\sigma_D\tl{u}_{\nu,\delta}$ where $\sigma_D=\bigotimes_{j\in J}\sigma_j$, then we obtain 
\begin{align*}
    \hspace{6mm}&\hspace{-6mm}\int_{X\setminus(D\cup Z_\nu)}\!\!\Bigl(2\varepsilon_\nu\!+\!\sum_{j\in J}\lambda_{j,\nu}\!\Bigr)|u_{\nu,\delta}|^2_{\wedge^{n,q}\opnd\otimes\hldpn}dV_{\opnd}\\
    &\leq\sup_{x\in X\setminus D}\biggl(\!\!\Bigl(2\varepsilon_\nu\!+\!\sum_{j\in J}\lambda_{j,\nu}(x)\!\Bigr)|\sigma_D(x)|^2_{h^D_{C^\infty}}\!\!\biggr)\!\int_{X\setminus(D\cup Z_\nu)}|\tl{u}_{\nu,\delta}|^2_{\wedge^{n,q}\opnd\otimes h^{D*}_{C^\infty}\otimes\hldpn}dV_{\opnd}\\
    &\leq \,C^{-1}\!\!\sup_{x\in X\setminus D}\biggl(\!\!\Bigl(2\varepsilon_\nu\!+\!\sum_{j\in J}\lambda_{j,\nu}(x)\!\Bigr)|\sigma_D(x)|^2_{h^D_{C^\infty}}\!\!\biggr)\!\int_{X\setminus(D\cup Z_\nu)}|\tl{u}_{\nu,\delta}|^2_{\wedge^{n,q}\opnd\otimes h}e^{-\varrho_\nu\psi_\nu}dV_{\opnd}.
\end{align*}
Let $\tl{\tau}_{\sigma,\nu}:=\sup_{x\in X\setminus D}\Big(\!\Big(2\varepsilon_\nu+\sum_{j\in J}\lambda_{j,\nu}(x)\Big)|\sigma_D(x)|^2_{h^D_{C^\infty}}\Big)$ for simplicity. 
Furthermore, if $\tl{u}_{\nu,\delta}$ is an $\opnd$-harmonic representative of $\{\beta/\sigma_D\}$, then the following follows.
\begin{align*}
    \tl{\tau}_{\sigma,\nu}\int_{X\setminus(D\cup Z_\nu)}|\tl{u}_{\nu,\delta}|^2_{\wedge^{n,q}\opnd\otimes h}e^{-\varrho_\nu\psi_\nu}dV_{\opnd}
    \leq2\tl{\tau}_{\sigma,\nu}\bigg|\bigg|\frac{\beta}{\sigma_D}\bigg|\bigg|^2.
\end{align*}
Since $\tl{\tau}_{\sigma,\nu}$ decreases to \(0\) as \(\nu\to+\infty\), we can obtain \(\bar\partial v=0\) in the same way as in the proof.
Based on the above observations, we propose the following conjecture.

\begin{conjecture}\label{Conjecture: Conjecture without assumption of D}
    Let $X,D,\omega_P,L,h$ be as in the setting of Theorem \ref{Theorem: Hard Lefschetz if TX>0 on D}. Then, does the wedge multiplication operator $\omega_P^q\wedge\bullet$ induce a surjective morphism 
    \begin{align*}
        \Phi^q_{\omega_P,h}\!:\!H^0(X,\Omega_X^{n-q}(\log D)\otimes L\otimes\scr{I}(h))\longrightarrow H^q(X,\Omega_X^n(\log D)\otimes L\otimes \scr{I}(h))\cap\rom{Im}(\times\sigma_D)
    \end{align*}
    for every nonnegative integer $q$?
\end{conjecture}

The following follow from Lemma \ref{Lemma: Lemme 8.6 in Dem82} or condition $(d')$ in the proof of Theorem \ref{Theorem: Hard Lefschetz if TX>0 on D}. 

\begin{remark}\label{Remark: TX Grif>0 and nefness}
    The assumption that $T_X$ is Griffiths semi-positive on $D$ is stronger than the assumption that $D$ is nef.
\end{remark}

\begin{corollary}\label{Corollary: Hard Lefschetz for h_min}
    Let $X,D,\omega_P,L,h$ be as in the setting of Theorem \ref{Theorem: Hard Lefschetz if TX>0 on D}.
    If each pseudo-effective line bundle $\Ox{D_j}$ admits a singular Hermitian metric $h^{D_j}_{min}$ with minimal singularities such that $E_{+}(h^{D_j}_{min})=\emptyset$, then the wedge multiplication operator $\omega_P^q\wedge\bullet$ induces a surjective morphism 
    \begin{align*}
        \Phi^q_{\omega_P,h}:H^0(X,\Omega_X^{n-q}(\log D)\otimes L\otimes\scr{I}(h))\longrightarrow H^q(X,\Omega_X^n(\log D)\otimes L\otimes \scr{I}(h))
    \end{align*}
    for every nonnegative integer $q$.
\end{corollary}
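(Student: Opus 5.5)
The plan is to rerun the proof of Theorem \ref{Theorem: Hard Lefschetz if TX>0 on D} verbatim. The only change is the choice of smooth approximating metrics $h_{j,\nu}$ on $\Ox{D_j}$: instead of regularizing the natural singular metric $\hbar_{D_j}=1/|\sigma_j|^2$, we regularize the minimal-singularity metric $h^{D_j}_{min}$. Tangent-bundle positivity entered that proof only through condition $(d')$, which guaranteed that the curvature losses $\lambda_{j,\nu}$ tend to $0$ uniformly on $X$. So the task is to produce, from $h^{D_j}_{min}$, smooth metrics with this uniform control, and then to check that the rest of the argument does not depend on using $\hbar_{D_j}$ specifically.

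First, write $h^{D_j}_{min}=h_je^{-\phi_j}$ with $\phi_j$ quasi-psh and $\iO{\Ox{D_j},h_j}+\idd\phi_j\geq0$. Since $E_+(\phi_j)=\emptyset$, Theorem \ref{Theorem: Demailly approximation of smooth functions}$(e)$ (equivalently, Lemma \ref{Lemma: Lemme 8.6 in Dem82}, where $\tau_-(x)\nu(\phi_j,x)=0$ everywhere) gives smooth $\phi_{j,\nu}\searrow\phi_j$ with $\iO{\Ox{D_j},h_je^{-\phi_{j,\nu}}}\geq-\lambda_{j,\nu}\omega$ and $\kappa_{j,\nu}:=\max_X\lambda_{j,\nu}\to0$. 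This is exactly $(b)$ and $(d')$ for the smooth metrics $h'_{j,\nu}:=h_je^{-\phi_{j,\nu}}$.

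Second, I define $\hbar^L_{D,\nu}$ as before with $|\sigma_j|^2_{h'_{j,\nu}}$ in place of $|\sigma_j|^2_{h_{j,\nu}}$. After adding a constant to $\phi_{j,\nu}$ (the $\phi_{j,\nu}$ are uniformly bounded below on compact $X$ because they decrease and are bounded above), we may assume $|\sigma_j|^2_{h'_{j,\nu}}<1$, so Lemma \ref{Lemma: function of log} applies. Because the $h'_{j,\nu}$ are smooth and the construction of Theorem \ref{Theorem: Logarithmic L2-Dolbeault resolusion} only needs some smooth metric on each $\Ox{D_j}$, each $\hbar^L_{D,\nu}$ still yields the logarithmic resolution, and Lemma \ref{Lemma: log resolusion for (n,q)-forms} carries over. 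The curvature estimate is unchanged: it uses only $(b)$, $(vi)$, and the bound $1+2\alpha/\log(\cdot)\in(0,1]$. It therefore gives $\iO{L,\hldpn}\geq-(2\varepsilon_\nu+\sum_j\kappa_{j,\nu})\opnd$, and the Bochner argument yields $\|\dbar v_\nu\|_\nu^2\to0$.

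The main obstacle is the monotonicity and limit step that replaces Lemma \ref{Lemma: hldnu increasing to h}, i.e. extracting the limit $v$ in a fixed Hilbert space and concluding that it lies in $H^0(X,\Omega^{n-q}_X(\log D)\otimes L\otimes\scr{I}(h))$. Now $|\sigma_j|^2_{h'_{j,\nu}}$ increases to $|\sigma_j|^2_{h_je^{-\phi_j}}$ rather than to $1$. I would handle this as follows. Since $x(\log(e^{-2\alpha}x))^{2\alpha}$ is increasing, $\hbar^L_{D,\nu}$ is still increasing in $\nu$, so weak limits can still be taken in $L^2(\hbar^L_{D,\nu_0})$. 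Moreover, the final membership claim only uses one fixed $\nu_1$ together with the resolution statement for $\hbar^L_{D,\nu_1}$, which holds because $h'_{j,\nu_1}$ is smooth. The uniform bound $\|v\|_\nu^2\leq2\|\beta\|^2$ is needed only for $\nu=\nu_1$, and it follows from the monotonicity just noted. The remaining inputs are the identification $u=\omega_P^q\wedge v\equiv\beta$ via condition $(v)$ and continuity of the de Rham--Weil map, and these are independent of the $D_j$-metrics. The argument of Theorem \ref{Theorem: Hard Lefschetz if TX>0 on D} therefore goes through and gives the surjectivity.
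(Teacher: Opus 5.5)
Your route is the paper's. There the corollary is read off from Lemma \ref{Lemma: Lemme 8.6 in Dem82} and condition $(d')$: regularize $h^{D_j}_{min}$ instead of $\hbar_{D_j}$, use $E_{+}=\emptyset$ to get $\kappa_{j,\nu}\to0$, and rerun the proof of Theorem \ref{Theorem: Hard Lefschetz if TX>0 on D}. Your check that the last step needs only three things is accurate, and it fills in what the paper leaves implicit: monotonicity of $\{\hbar^L_{D,\nu}\}$, a $\nu$-independent bound of $\hbar^L_{D,\nu}$ by $h$, and the resolution for one fixed $\nu_1$. The convergence $\hbar^L_{D,\nu}\nearrow h$ itself is not needed.

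One justification needs repair. You say the $\phi_{j,\nu}$ are uniformly bounded below "because they decrease and are bounded above". A decreasing sequence is bounded below only by its limit $\phi_j$. A quasi-psh weight with vanishing Lelong numbers need not be bounded below; a local model is $-\log(-\log|z|)$.

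A normalization constant depending on $\nu$ would not rescue this, because it can break the monotonicity of $\{\hbar^L_{D,\nu}\}$ that your limit step relies on. The correct argument uses minimality. The metric $\hbar_{D_j}=h_je^{-\log|\sigma_j|^2_{h_j}}$ has curvature $[D_j]\geq0$, so $h^{D_j}_{min}\le C\,\hbar_{D_j}$, which means $\phi_j\geq\log|\sigma_j|^2_{h_j}-C_j$. Since $\phi_{j,\nu}\geq\phi_j$, this gives $|\sigma_j|^2_{h_je^{-\phi_{j,\nu}}}\le|\sigma_j|^2_{h_je^{-\phi_j}}\le e^{C_j}$ for every $\nu$. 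Replacing every $\phi_{j,\nu}$ by $\phi_{j,\nu}+C_j+1$, with one constant for all $\nu$, then yields $|\sigma_j|^2_{h'_{j,\nu}}\le e^{-1}$. This preserves monotonicity and gives $\hbar^L_{D,\nu}\le C\,h_\nu\le C\,h$ with $C$ independent of $\nu$, which is what the estimate on $\|\beta\|_{\nu,\psi,\delta}$ uses. This is exactly where the minimal-singularities hypothesis enters; with this fix your proof goes through.
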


By \cite[Proposition 1.7]{DPS01}, if $\Ox{D}$ is nef and big, then the minimal singular Hermitian metric \(h_{\min}\) satisfies \(E_{+}(h_{\min})=\emptyset\). 
However, it should be noted that in this case the positivity is sufficiently strong that the cohomology group in the target of \(\Phi^q_{\omega_P,h}\) vanishes by Nadel vanishing theorem, 
and hence this corollary becomes trivial.

\begin{proof}[Proof of Theorem \ref{Theorem: Hard Lefschetz for semi-positivity}]
    By the assumption, we may assume that the curvature $\iO{\Ox{D_j},h_j}$ is semi-positive on $X$.
    We define a singular Hermitian metric $\hbar^L_D$ on $L|_{X\setminus D}$ by 
    \begin{align*}
        \hbar^L_D:=h\prod_{j\in J}|\sigma_j|^2_{h_j}\Big(\log\big(\vartheta_j|\sigma_j|^2_{h_j}\big)\Big)^{2\alpha},
    \end{align*}
    where $\displaystyle \vartheta_j:=\frac{e^{-2\alpha}}{2\max_X|\sigma_j|^2_{h_j}}<\frac{e^{-2\alpha}}{\max_X|\sigma_j|^2_{h_j}}$, 
    and $\alpha\in\bb{N}$ is a sufficiently large positive integer depending only on the compactness of $X$.
    Then, we obtain 
    \begin{align*}
        1+\frac{2\alpha}{\log\big(\vartheta_j|\sigma_j|^2_{h_j}\big)}>0
    \end{align*}
    on $X$.
    For each $\nu\!\in\!\bb{N}$, we similarly define a singular Hermitian metric $\hbar^L_{D,\nu}$ on $L|_{X\setminus D}$ by 
    \begin{align*}
        \hbar^L_{D,\nu}:=h_\nu\prod_{j\in J}|\sigma_j|^2_{h_j}\Big(\log\big(\vartheta_j|\sigma_j|^2_{h_j}\big)\Big)^{2\alpha}.
    \end{align*}
    Then, by conditions $(i)$ and $(ii)$ of Setup \ref{Setup}, the sequence $\{\hldnu\}_{\nu\in\bb{N}}$ is increases to $\hbar^L_D$. 

    Let $\{\beta\}\in H^q(X,\Omega^n_X(\log D)\otimes L\otimes\scr{I}(h))$ be a cohomology class represented by a smooth form with values in $\Omega^n_X(\log D)\otimes L\otimes\scr{I}(h)$. 
    As in the proof of Theorem \ref{Theorem: Hard Lefschetz if TX>0 on D}, there exists some $\varrho_\nu\in(0,\mu_\nu)$ such that, setting $\hldpn:=\hldnu e^{-\varrho_\nu\psi_\nu}$, we obtain 
    \begin{align*}
        ||\beta||^2_{\nu,\psi,\delta}:=\!\int_{X\setminus(D\cup Z_\nu)}\hspace{-4mm}|\beta|^2_{\wedge^{n,q}\opnd\otimes\hldpn}dV_{\opnd}\leq2||\beta||^2:=2\!\int_X|\beta|^2_{\wedge^{n,q}\omega_P\otimes \hbar^L_D}dV_{\omega_P}<+\infty.
    \end{align*} 
    Note that Lemma \ref{Lemma: log resolusion for (n,q)-forms} also holds for the complex $\Big(\scr{L}^{n,\ast}_{L,\hldpn,\omega_{P,\nu,\delta}},\dbar\Big)$.
    As in the proof of Theorem \ref{Theorem: Hard Lefschetz if TX>0 on D}, the Hilbert space $Z^q\Big(\scr{L}^{n,\ast}_{L,\hldpn,\omega_{P,\nu,\delta}}\Big)$ admits an orthogonal decomposition, and \(\beta\) is a \(\dbar\)-closed form in the Hilbert space induced by the complete \kah metric \(\opnd\) on \(X\setminus (D\cup Z_\nu)\). 
    Therefore, there exists an \(\opnd\)-harmonic form \(u_{\nu,\delta}\) representing the same cohomology class as \(\beta\), satisfying 
    \begin{align*}
        ||u_{\nu,\delta}||^2_{\nu,\psi,\delta}\leq||\beta||^2_{\nu,\psi,\delta}, 
    \end{align*}
    where such a harmonic representative exists only for $\delta > 0$. 

    Let $v_{\nu,\delta}$ be the unique $(n-q,0)$-form such that $u_{\nu,\delta}=v_{\nu,\delta}\wedge\opnd^q$, whose existence follows from the pointwise Lefschetz isomorphism. 
    Then, we obtain
    \begin{align*}
        (q!)^2||v_{\nu,\delta}||^2_{\nu,\psi,\delta}=||u_{\nu,\delta}||^2_{\nu,\psi,\delta}\leq||\beta||^2_{\nu,\psi,\delta}\leq2||\beta||^2,
    \end{align*}
    where \(u_{\nu,\delta}\) is primitive as a \((0,n-q)\)-form. 
    A straightforward computation shows that
    \begin{align*}
        \iO{L,\hldpn}&=\iO{L,h_\nu}+\varrho_\nu\idd\psi_\nu+i\sum^J_{j=1}\Theta_{\Ox{D_j},h_j}\\
        &\qquad+i\sum^J_{j=1}\frac{2\alpha\Theta_{\Ox{D_j},h_j}}{\log\big(\vartheta_j|\sigma_j|^2_{h_j}\big)}+i\sum^J_{j=1}\frac{2\alpha\partial\log|\sigma_j|^2_{h_j}\wedge\dbar\log|\sigma_j|^2_{h_j}}{\Big(\log\big(\vartheta_j|\sigma_j|^2_{h_j}\big)\Big)^2}\\    
        &\geq-2\varepsilon_\nu\omega+i\sum^J_{j=1}\biggl(1+\frac{2\alpha}{\log\big(\vartheta_j|\sigma_j|^2_{h_j}\big)}\biggr)\Theta_{\Ox{D_j},h_j}\\
        &\geq-2\varepsilon_\nu\omega\geq-2\varepsilon_\nu\opnd.
    \end{align*}
    Thus, Proposition \ref{Proposition: Prop in DPS01} can be applied to $\big(X\setminus(D\cup Z_\nu),\opnd,L,\hldpn\big)$, yielding 
    \begin{align*}
        (q!)^2\big|\big|\dbar v_{\nu,\delta}\big|\big|^2_{\nu,\psi,\delta}\leq \frac{2q}{q+1}\varepsilon_\nu||u_{\nu,\delta}||^2_{\nu,\psi,\delta}\leq \frac{4q}{q+1}\varepsilon_\nu||\beta||^2.
    \end{align*}
    These uniform bounds imply that there are subsequences $u_{\nu,\delta_k}$ and $v_{\nu,\delta_k}$ with $\delta_k\to 0$, possessing weak-$L^2$ limits $u_\nu = \lim_{k\to+\infty}u_{\nu,\delta_k}$ and $v_\nu = \lim_{k\to+\infty}v_{\nu,\delta_k}$. 
    Note that, as in the proof of Theorem \ref{Theorem: Hard Lefschetz if TX>0 on D}, there is a stronger/weaker relationship between the $L^2$-topologies on $(0,n-q)$-forms and $(n,q)$-forms.

    The above estimates yield 
    \begin{align*}
        (q!)^2||v_\nu||^2_\nu
        &=||u_\nu||^2_\nu:=\int_{X\setminus D}|u_\nu|^2_{\wedge^{n,q}\omega_P\otimes\hldnu}dV_{\omega_P}\leq||u_\nu||^2_{\nu,\psi,0}\leq2||\beta||^2,\\
        \big|\big|\dbar v_\nu\big|\big|^2_\nu&\leq\frac{4\varepsilon_\nu}{(q+1)!(q-1)!}||\beta||^2,\\
        u_\nu&=\omega^q_P\wedge v_\nu\equiv\beta \qquad \text{in}\quad H^q(X,\Omega^n_X(\log D)\otimes L\otimes\scr{I}(h_\nu e^{-\varrho_\nu\psi_\nu})).
    \end{align*}
    Again, by arguing in a fixed given Hilbert space $L^2(\hbar^L_{D,\nu_0})$, we find $L^2$ convergent subsequences $u_\nu\to u$, $v_\nu\to v$ as $\nu\to+\infty$, and in this way $(q+1)!(q-1)!\big|\big|\dbar v_\nu\big|\big|^2_\nu\leq4\varepsilon_\nu||\beta||^2$ gives $\dbar v=0$, and the above inequalities yield 
    \begin{align*}
        ||v||^2&=\int_{X\setminus D}|v|^2_{\wedge^{0,n-q}\omega_P\otimes\hbar^L_D}dV_{\omega_P}\leq||\beta||^2<+\infty,\\
        u&=\omega^q_P\wedge v\equiv\beta\qquad\text{in}\quad H^q(X,\Omega^n_X(\log D)\otimes L\otimes\scr{I}(h)). 
    \end{align*}
    The last equality can also be verified using the de Rham-Weil isomorphism, where the equisingularity property $\scr{I}(h)=\scr{I}(h_\nu e^{-\varrho_\nu\psi_\nu})$, i.e., condition $(v)$, plays a crucial role. 
    Finally, the condition on the section \(v\), together with Theorem \ref{Theorem: Logarithmic L2-Dolbeault resolusion}, yields 
    \begin{align*}
        v\in \Gamma\big(X\setminus D,\scr{L}^{n-q,0}_{L,\hbar^L_D,\omega_P}\big)\cap\rom{Ker}\,\dbar=H^0(X,\Omega_X^{n-q}(\log D)\otimes L\otimes\scr{I}(h)).
    \end{align*}
    This completes the proof.
\end{proof}

The following corollary holds immediately.

\begin{corollary}\label{Corollary: hard Lefschetz for semi-positive of L and D_j}
    Let $X,D,\omega_P$ be as in the setting of Theorem \ref{Theorem: Hard Lefschetz if TX>0 on D}. 
    If a line bundle $L\longrightarrow X$ and each line bundle $\Ox{D_j}$ are semi-positive, then the wedge multiplication operator $\omega_P^q\wedge\bullet$ induces a surjective morphism 
    \begin{align*}
        \Phi^q_{\omega_P}:H^0(X,\Omega_X^{n-q}(\log D)\otimes L)\longrightarrow H^q(X,\Omega_X^n(\log D)\otimes L)
    \end{align*}
    for every nonnegative integer $q$.
\end{corollary}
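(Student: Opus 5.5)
This is a direct specialisation of Theorem \ref{Theorem: Hard Lefschetz for semi-positivity}. The plan is to equip $L$ with a smooth Hermitian metric $h_L$ whose curvature is semi-positive, which exists because $L$ is semi-positive. Since $h_L$ is smooth, its local weights are smooth and in particular quasi-plurisubharmonic. Also $\iO{L,h_L}\geq0$ holds pointwise, hence in the sense of currents. So $(L,h_L)$ is a pseudo-effective line bundle with a singular Hermitian metric of semi-positive curvature current, exactly as in the setting of Theorem \ref{Theorem: Hard Lefschetz if TX>0 on D}.

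Next I will identify the multiplier ideal sheaf. The local weight $\varphi$ of $h_L$ is smooth, hence locally bounded, so $e^{-\varphi}$ is locally integrable everywhere. Therefore $\scr{I}(h_L)=\cal{O}_X$. Consequently
\[
\Omega_X^{p}(\log D)\otimes L\otimes\scr{I}(h_L)=\Omega_X^{p}(\log D)\otimes L
\]
for $p=n-q$ and $p=n$. Under this identification, $\Phi^q_{\omega_P,h_L}$ is precisely the map $\Phi^q_{\omega_P}$ of the statement, given by $u\mapsto\{\omega_P^q\wedge u\}$. It is well defined by Proposition \ref{Proposition: well-definedness of Phi}, applied with $h=h_L$.

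Each $\Ox{D_j}$ is semi-positive by hypothesis, so all the assumptions of Theorem \ref{Theorem: Hard Lefschetz for semi-positivity} hold with $h=h_L$. That theorem then gives surjectivity of $\Phi^q_{\omega_P,h_L}=\Phi^q_{\omega_P}$ for every $q\geq0$.

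I do not expect any real obstacle. The only point to check is that the cohomological identifications are compatible. The logarithmic $L^2$-Dolbeault isomorphism of Theorem \ref{Theorem: Logarithmic L2-Dolbeault resolusion} with $\scr{I}(h_L)=\cal{O}_X$ computes $H^q(X,\Omega^n_X(\log D)\otimes L)$ itself, so the target in Theorem \ref{Theorem: Hard Lefschetz for semi-positivity} is literally the stated target. One could even skip the Demailly approximation inside that proof, taking $h_\nu=h_L$, $\varepsilon_\nu=0$ and $Z_\nu=\emptyset$, but this is not needed.
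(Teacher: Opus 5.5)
Your proposal is correct and matches the paper's approach: the paper states the corollary follows immediately from Theorem \ref{Theorem: Hard Lefschetz for semi-positivity}. You apply that theorem with a smooth semi-positively curved metric $h_L$ on $L$, for which $\scr{I}(h_L)=\cal{O}_X$, exactly as the paper intends.
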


\section{A counterexample}

\begin{remark}\label{Remark: counterexample for nef case}
    The surjectivity of \(\Phi^q_{\omega_P}\) in Corollary \ref{Corollary: hard Lefschetz for semi-positive of L and D_j} does not necessarily hold when both \(L\) and \(\Ox{D}\) are nef, as the following counterexample shows.
\end{remark}

\begin{counterexample}\label{Counterexample: counterexample following DPS01}
    Let $B$ be an elliptic curve and $V$ the rank let $2$ vector bundle over $B$ which is defined as the (unique) non split extension 
    \begin{align*}
        0\longrightarrow\cal{O}_B\longrightarrow V\longrightarrow\cal{O}_B\longrightarrow 0.
    \end{align*}
    In particular, the bundle $V$ is numerically flat, i.e., $c_1(V) = 0, c_2(V) = 0$. We consider the ruled surface $X = \bb{P}(V)$. On that surface there is a unique section $C = \bb{P}(\cal{O}_B) \subset X$ with $C^2 = 0$ and $\Ox{C}=\cal{O}_{\bb{P}(V)}(1)$ is a nef line bundle. 
    For any smooth \kah metric $\omega_P$ on $X\setminus C$ which is of \Pt along $C$ and all $k\geq3$, the map 
    \begin{align*}
        \Phi_{\omega_P}:H^0(X,\Omega^1_X(\log C)\otimes\cal{O}_{\bb{P}(V)}(k))\longrightarrow H^1(X,\Omega^2_X(\log C)\otimes\cal{O}_{\bb{P}(V)}(k))\simeq\bb{C}\ne0,
    \end{align*}
    induced by the wedge multiplication operator $\omega_P\wedge\bullet$, is not surjective and, in particular, is the zero map. 
\end{counterexample}

\begin{proof}
It is easy to see that $h^0(X,\Ox{mC})=h^0(X,\cal{O}_{\bb{P}(V)}(m))=h^0(B,S^mV)=1$ for all $m\in\bb{N}$.
%
We claim that 
\begin{align*}
    h^0(X,\Omega^1_X(\log C)(kC))=2
\end{align*}
for all $k\geq2$. 
To prove this, consider the exact sequence
\begin{align*}
    0\longrightarrow\pi^*\Omega^1_C\simeq\cal{O}_X\longrightarrow\Omega^1_X(\log C)\longrightarrow\Omega^1_{X/C}(\log C)\longrightarrow0
\end{align*}
tensored with $\Ox{kC}$, and note that $\Omega^1_{X/C}(\log C)\simeq\Ox{-C}$, which follows from $\Omega^1_{X/C}(\log C)|_F=\Omega^1_{\bb{P}}(\log \{p\})\simeq\cal{O}_{\bb{P}}(-1)$ since \(C\cap F=\{p\}\) is a single point on the fiber $F\simeq\bb{P}$.
From this, we obtain the following long exact sequence
\begin{align*}
    0\longrightarrow H^0(X,\Ox{kC})\longrightarrow H^0(X,\Omega^1_X(\log C)(kC))\longrightarrow H^0(X,\Ox{(k-1)C})\longrightarrow\cdots,
\end{align*}
where $h^0(X,\Ox{kC})=1$ for all $k\geq0$, and hence $h^0(X,\Omega^1_X(\log C)(kC))\leq2$. 
Combining this with the injection $H^0(X,\Omega^1_X(kC))\longrightarrow H^0(X,\Omega^1_X(\log C)(kC))$ 
and the known fact that $h^0(X,\Omega^1_X(kC))=2$ for all $k\geq2$ (see \cite[$\S$2.5]{DPS01}), our claim follows.

Since $K_X\simeq\Ox{-2C}$, setting $m=k-2\geq0$, we have $K_X(kC)\simeq\Ox{mC}$.
By the vanishing $R^q\pi_*\Ox{mC}=0$ for $q>0$ and the Leray spectral sequence, we obtain $H^1(X,\Ox{mC})\simeq H^1(B,S^mV)$, where $\pi_*\Ox{mC}\simeq S^mV$.
Since \(B\) is an elliptic curve, the Riemann-Roch theorem gives $\chi(B,S^mV)=\rom{deg}\,S^mV\!+\rom{rk}(S^mV)(1-g_B)=0$. 
Hence, for all $k\geq2$, we obtain 
\begin{align*}
    h^1(X,K_X(kC))=h^1(B,S^mV)=h^0(B,S^mV)=1.
\end{align*}

We consider the diagram 
\[
\begin{tikzcd}
H^0(X,\Omega_X^1(\log C)(2C))
  \arrow[r, "\omega_P\wedge"]
  \arrow[d, "\cong"']
&
H^1(X,\Omega_X^2(\log C)(2C))=H^1(X,K_X(3C))
  \arrow[d, "\varphi"]
\\
H^0(X,\Omega_X^1(\log C)(3C))
  \arrow[r, "\omega_P\wedge", "\psi"']
&
H^1(X,\Omega_X^2(\log C)(3C))=H^1(X,K_X(4C)).
\end{tikzcd}
\]
It is commutative since $\omega_P\wedge(\sigma_C u)=\sigma_C(\omega_P\wedge u)$, for every $u\in H^0(X,\Omega_X^1(\log C)(2C))$. 
The cohomology sequence of 
\begin{align*}
    0\longrightarrow K_X(3C)\longrightarrow K_X(4C)\simeq\Ox{2C}\longrightarrow K_X(4C)|_C\simeq\cal{O}_C\longrightarrow0
\end{align*}
implies $\varphi=0$. Indeed, for the induced long exact sequence 
\begin{align*}
    \longrightarrow H^0(X,K_X(4C))\overset{r}{\longrightarrow}H^0(C,\cal{O}_C)\overset{\delta}{\longrightarrow}H^1(X,K_X(3C))\overset{\varphi}{\longrightarrow}H^1(X,K_X(4C))\longrightarrow,
\end{align*}
since \(\sigma_C|_C=0\), the restriction map \(r\) is also zero, and hence $\delta$ is injective by exactness. 
Furthermore, $\delta$ is an isomorphism by $h^1(X,K_X(3C))=1$, and hence $\rom{Ker}\,\varphi=H^1(X,K_X(3C))$. 
Therefore, the diagram implies $\psi=0$.
For any \(k\geq2\), the surjectivity of $\psi$ is likewise lost by replacing \((2C,3C)\) with \((kC,(k+1)C)\).
\end{proof}

\vspace*{3mm}

\noindent
{\bf Acknowledgement.} 
The author is supported by Grant-in-Aid for Research Activity Start-up $\sharp$26K16989 from the Japan Society for the Promotion of Science (JSPS).







\end{document}